\newtheorem{theorem}{\textbf{Theorem}}[section]
\newtheorem{lemma}{\textbf{Lemma}}[section]
\newtheorem{proposition}{\textbf{Proposition}}[section]
\newtheorem{corollary}{\textbf{Corollary}}[section]
\newtheorem{remark}{\textbf{Remark}}[section]
\newtheorem{definition}{\textbf{Definition}}[section]
\def\be{\begin{equation}}
\def\ee{\end{equation}}
\def\bea{\begin{eqnarray}}
\def\eea{\end{eqnarray}}
\def\bt{\begin{theorem}}
\def\et{\end{theorem}}
\def\bl{\begin{lemma}}
\def\el{\end{lemma}}
\def\br{\begin{remark}}
\def\er{\end{remark}}
\def\bp{\begin{proposition}}
\def\ep{\end{proposition}}
\def\bc{\begin{corollary}}
\def\ec{\end{corollary}}
\def\bd{\begin{definition}}
\def\ed{\end{definition}}
\def\non{\nonumber }
\begin{document}

\title{Well-posedness and longtime behavior \\ for the modified phase-field crystal equation}

\author{
{\sc Maurizio Grasselli}\footnote{Dipartimento di Matematica, Politecnico di Milano, Milano 20133, Italy, \textit{maurizio.grasselli@polimi.it}}
\ and
\ {\sc Hao Wu}\footnote{School of Mathematical Sciences and Shanghai Key Laboratory for Contemporary Applied Mathematics, Fudan University, Shanghai 200433, China, \textit{haowufd@yahoo.com}}
}

\date{\today}

\maketitle


\begin{abstract}
\noindent We consider a modification of the so-called phase-field crystal (PFC) equation introduced by K.R. Elder et al. This variant has recently been proposed by P. Stefanovic et al. to distinguish between elastic relaxation and diffusion time scales. It consists of adding an inertial term (i.e. a second-order time derivative) into the PFC equation. The mathematical analysis of the resulting equation is more challenging with respect to the PFC equation, even at the well-posedness level. Moreover, its solutions do not regularize in finite time as in the case of PFC equation. Here we analyze the modified PFC (MPFC) equation endowed with periodic boundary conditions. We first prove the existence and uniqueness of a solution with initial data in a bounded energy space. This solution satisfies some uniform dissipative estimates which allow us to study the global longtime behavior of the corresponding dynamical system.
In particular, we establish the existence of an exponential attractor. Then we demonstrate that any trajectory originating from the bounded energy phase space does converge to a unique equilibrium. This is done by means of a suitable version of the {\L}ojasiewicz-Simon inequality. A convergence rate estimate is also given.

\medskip\noindent
\textbf{Keywords:} phase-field crystal equation, existence and uniqueness, dissipative estimates,
exponential attractors, convergence to equilibrium.

\medskip\noindent
\textbf{MRS 2010:} 35Q82, 37L99, 74N05, 82C26.
\end{abstract}

\section{Introduction}
\setcounter{equation}{0}
\noindent
The so-called phase-field crystal (PFC) equation has been recently employed to model and simulate the dynamics of crystalline materials, including crystal growth in a supercooled liquid, dendritic and eutectic solidification, epitaxial growth, and so on (see \cite{EG,EKHG}, cf. also \cite{OS08,PDA}). In the phase-field crystal approach,  the number density of atoms is approximated by using a phase function $\phi$. This function tends to minimize the following (dimensionless) free energy functional over a spatial bounded (and usually periodic) domain $Q$
\be
 E(\phi)=\int_{Q} \left(\frac12|\Delta \phi|^2-|\nabla \phi|^2+F(\phi)\right) dx,\label{EE}
\ee
where  $$ F(\phi)=\frac{1-\epsilon}{2}\phi^2+\frac14 \phi^4.$$
The parameter $\epsilon$ is a constant with physical significance proportional to the undercooling, i.e., $\epsilon \sim T_e-T$, $T_e$ being the equilibrium temperature at which the phase transition occurs \cite{EGL11}.

The evolution of $\phi$ is thus governed by a (conserved) gradient flow generated by the Fr\'{e}chet derivative of $E$ (cf. \cite{EKHG,EG}), that is,
\be
\phi_t=\Delta \left\{ \big[-\epsilon+(1+\Delta)^2\big]\phi+\phi^3\right \}\label{pfc}
\ee
in dimensionless units, where the spatial coordinates are
measured in units proportional to the lattice constant.
The PFC equation \eqref{pfc} is a sixth--order partial differential equation preserving the total mass, which can be viewed as the analog of the (fourth--order) Swift--Hohenberg equation (cf. \cite{SH77}).  Equation \eqref{pfc} describes the microstructure of solid-liquid systems at inter-atomic length scales and provides a possibly accurate way to model crystal dynamics, especially defect dynamics in atomic-scale resolution. For more details about the modeling based on the phase-field crystal approach, we refer to the recent review \cite{ELWGTTG12} (see also \cite{EGL11,PDA}).  However, one major disadvantage of equation \eqref{pfc} is that it fails to distinguish between the elastic relaxation and diffusion time scales (see, e.g., \cite{EG, SHP06}). In order to overcome this difficulty and to incorporate both fast elastic relaxation (e.g., in a rapid quasi-phononic time scale) and slower mass diffusion, a modified phase-field crystal (MPFC) model was recently proposed in \cite{SHP06} (cf. also \cite{GDL09,GE11,SHP09} and \cite[Section 3.1.1.2]{ELWGTTG12}):
\be
\beta\phi_{tt}+\phi_t=\Delta \big \{ \big[-\epsilon+(1+\Delta)^2\big]\phi+\phi^3\big \},\label{mpfc}
\ee
where $\beta$ is a (positive) relaxation time. On the other hand, in the context of phase field techniques applied to fast phase transitions, equations like \eqref{mpfc} have been derived in \cite{GJ05} to take large deviations from thermodynamic equilibrium into account. We observe that the presence of the inertial term is not a minor modification from the mathematical viewpoint. For instance, contrary to the parabolic equation \eqref{pfc}, solutions to \eqref{mpfc} do not regularize in finite time.

The modified phase-field crystal (MPFC) equation \eqref{mpfc} has been recently studied from the numerical analysis point of view, in \cite{BHLWWZ,WW11}, while the corresponding literature on the simpler PFC equation \eqref{pfc} is more abundant (cf., e.g., \cite{BRV,CW,EW,GN,WWL09,HWWL09}). In particular, the authors derived different types of unconditionally energy stable finite difference schemes based on a suitable convex splitting for the free energy $E$. On the theoretical side, to the best of our knowledge, the only available results are the existence of a \emph{weak} solution and of a unique \emph{strong} solution to the MPFC equation \eqref{mpfc} up to any positive final time $T>0$. They were both proven in \cite{WW10} by using a time discretization  scheme and taking the initial value of $\phi_t$ equal to zero in order to ensure mass conservation (see Section 2). The authors observe: \emph{Because of the
presence of the second order temporal derivatives, the establishment of a global strong
solution and smooth solution for \eqref{mpfc} is very subtle} (see \cite[Section 2.3]{WW10}). Indeed, the existence issue has been
investigated only partially so far. For instance, no existence result is available for the so-called \emph{energy} solutions (see Definition \ref{soldef} below). Such solutions are natural in the sense that they are only required to have finite free energy and they are more general than the weak ones. Besides, uniqueness has been proven only for strong solutions in \cite{WW10}. Uniqueness of energy (or weak) solutions is not straightforward because of the spatial regularity gap between $\phi$ and $\phi_t$. On the other hand, the analysis of the longtime behavior of solutions to \eqref{mpfc} is a further important issue that has not been explored so far. Due the dissipative nature of \eqref{mpfc}, one would expect to find a global attractor as well as an exponential attractor. Also, the convergence of a trajectory to a single equilibrium turns out to be nontrivial, since the stationary set can have a quite complicated structure (see, e.g., \cite{PR} for an analysis of the stationary one-dimensional equation).

The goal of the present contribution is to establish first the well-posedness of \eqref{mpfc} in the energy space without any restriction
on the initial value of $\phi_t$. A consequence of this is that the mass is conserved only asymptotically and the
corresponding (dissipative) dynamical system is no longer a gradient system (i.e. there is no Lyapunov functional).
Well-posedness is proven by combining standard (e.g. a Galerkin-type approximation scheme) and nonstandard arguments. Then, we demonstrate the existence of global and exponential attractors as well as  the convergence
of single trajectories to single stationary states. Such results are based on the (uniform) precompactness of solutions, which
is obtained through a suitable decomposition of the solution (due to the hyperbolic-like nature of the equation, see Section 5). Besides, the related proofs require some care also because the system does not possess a Lyapunov functional. In order to overcome this obstacle, we shall treat the autonomous system \eqref{mpfc} by using some techniques employed for  non-autonomous evolution equations \cite{GW,H06,HT01} (see Section 6).

For the sake of convenience, we rewrite \eqref{mpfc} into the following form
 \be
\beta \phi_{tt}+\phi_t=\Delta[\Delta^2 \phi+2\Delta \phi+f(\phi)],\qquad \text{ in } \; Q\times (0,+\infty),\label{e1}
 \ee
where
 \be
 f(\phi)=F'(\phi)=\phi^3+(1-\epsilon)\phi.\label{f}
 \ee
 Here, $Q=(0,1)^n$ is a unit cube in $\mathbb{R}^n$, $n\leq 3$, without loss of generality.
 The MPFC equation \eqref{e1} is considered in the periodic setting and is subject to the initial conditions
\be
\phi|_{t=0}=\phi_0(x),\quad \phi_t|_{t=0}=\phi_1(x), \qquad x\in Q.\label{e2}
\ee
We note that the choice of periodic boundary
conditions is realistic since in the crystalline materials the patterns of the nanostructures statistically repeat
throughout the domain, which is much larger than the length-scales of atoms. Nevertheless, our theoretical results also hold for homogeneous Neumann boundary conditions, as well as for mixed periodic-homogeneous Neumann boundary conditions.

The plan of this paper goes as follows.
In Section 2 we introduce a notion of bounded \emph{energy} solution for the MPFC equation \eqref{e1} with \eqref{e2}.
This solution is more general than the \emph{weak} solution considered in \cite{WW10} (see Definition \ref{soldef}). A somewhat similar situation happens in the case of the modified
Cahn--Hilliard equation (see \cite{GSZ,GSZ2,GSSZ}, cf. also \cite{GGMP1,ZM05} for the one-dimensional case). However, the latter equation is technically a bit more challenging. For instance, in the present case, the sixth-order spatial operator ensures that
the weak solutions are globally bounded in three dimensions. This is not true for the modified Cahn-Hilliard equation.
Then, in Section 3, we provide some (formal) a priori global dissipative estimates that will be useful in the sequel (cf. Lemmas \ref{es} and \ref{es1}). The well-posedness is proven in Section 4 so that we can define a dissipative semigroup
acting on the energy phase space (cf. Theorems \ref{exe} and \ref{abs}).
The existence of the global attractor (Theorem \ref{gloatt}) as well as of an exponential attractors (Theorem \ref{exat}) are established in Section 5.
The final Section 6 is devoted to prove the convergence of energy solutions
to single equilibria (Theorem \ref{convergence}) by using a suitable adaptation of the {\L}ojasiewicz--Simon inequality (cf. Lemma \ref{ls}).

We conclude by observing that an interesting open issue is the construction of a robust family of exponential attractors with respect to the relaxation time $\beta$ (see \cite[Section 3.3]{MZ}). More precisely, the goal is to show that the existence of
a family of exponential attractors depending on $\beta\geq 0$, which is (H\"{o}lder) continuous with respect to $\beta$.
Such a result essentially says that the non-transient dynamics of the MPFC equation \eqref{mpfc} is \emph{close} to the one
of the PFC equation \eqref{pfc} in a quantitative way. The result can be obtained by using, e.g., the argument devised in
\cite{MPZ} for the damped semilinear wave equation. Nonetheless, in the present case, there is an additional difficulty related to the already mentioned high (spatial) regularity gap between $\phi$ and
$\phi_t$. This seems to require the construction of a more regular invariant set with respect to what is needed here (see Proposition \ref{abs2}) in order to estimate the energy norm of the difference between the solution to the MPCF and the PFC equations, respectively. The corresponding analysis will be carried out in a forthcoming paper.


\section{Preliminaries}
\setcounter{equation}{0}
\textit{Notations and functional spaces.}
Let $V$ be a real Banach space with norm $\|\cdot\|_V$ and denote $V^*$ its dual space. By $<\cdot,\cdot>_{V^*,V}$ we indicate the duality product between $V$ and $V^*$. We denote by $L(X;Y)$ the space of all bounded linear operators
from a Banach space $X$ into a second Banach space $Y$, and we simply write $L(X)=L(X;X)$. Next, we denote by $H^m_p(Q)$, $m\in \mathbb{N}$, the space of functions that are in $H^m_{loc}(\mathbb{R}^n)$ and periodic with the period $Q$. For an arbitrary $m\in \mathbb{N}$, $H^m_p(Q)$ is a Hilbert space for the scalar product $(u,v)_{m}=\sum_{|\kappa|\leq m}\int_Q D^\kappa u(x)D^\kappa v(x) dx$ ($\kappa$ being a multi-index) and its associated norm $\|u\|_m=\sqrt{(u,u)_m}$.  For $m=0$, $H^0_p(Q)=L^2_p(Q)$ and the inner product as well as the norm on $L^2_p(Q)$ are simply indicated by $(\cdot, \cdot)$ and $\|\cdot\|$, respectively.

 The mean value of any function $u\in L^2_p(Q)$ is denoted by $\langle u\rangle=|Q|^{-1}\int_{Q} u dx$ and we set $\overline u=u-\langle u\rangle$. For the sake of simplicity, we assume $|Q|=1$ hereafter.
The dual space of $H^{m}_p(Q)$ is denoted by $H^{-m}_p(Q)$, which is equipped
with the operator norm
 given by
 $\|\mathcal{T}\|_{-m}=\sup_{\|u\|_m=1,\ u\in H^m_p(Q)}|\mathcal{T}(u)|$.
 For $m=1$, we introduce an equivalent and more convenient norm associated with the inner product
$$(u, v)_{-1}= (\nabla \psi_u, \nabla \psi_v)+\langle u\rangle\langle v\rangle,\quad \forall\, u, v\in H^{-1}_p(Q),$$
where $\psi_u$ (respectively $\psi_v$) is the unique solution to the elliptic equation in $Q$ subject to periodic boundary conditions:
$$ -\Delta \psi_u=u-\langle u\rangle, \quad \text{with}\ \langle \psi_u\rangle=0.$$
We denote by $\dot{H}^m_p(Q)=\{u\in H^m_p(Q):\ \langle u\rangle=0\}$ the Sobolev spaces for functions with zero mean.
For any $u, v\in \dot{L}^2_p(Q)$, we have
$$(u, v)_{-1}=(\nabla \psi_u, \nabla \psi_v)\quad \text{and}\quad \|u\|_{-1}=\|\nabla \psi_u\|.$$
Then we observe that $$A_0=-\Delta: \dot{H}_p^2\mapsto \dot{L}^2_p(Q)$$ is a positive operator so that its powers $A_0^s$ ($s\in\mathbb{R}$) are well defined. In particular, for $s=-1$,
$$ (u, v)_{-1}=(A_0^{-1} u, v)=(u, A^{-1}_0 v)=(A_0^{-\frac12} u, A_0^{-\frac12} v) \quad \text{and}\quad \|u\|_{-1}=\|A_0^{-\frac12} u\|.$$
We also need to introduce the product spaces
\bea
&& \mathbb{X}_0=H^2_p(Q)\times H^{-1}_p(Q),\quad \mathbb{X}_1=H^3_p(Q)\times L^2_p(Q),\non\\
&& \mathbb{X}_2=H^4_p(Q)\times H^1_p(Q),\quad\ \ \mathbb{X}_3=H^6_p(Q)\times H^3_p(Q)\non
\eea
 endowed with the graph norm.

\textit{Mass conservation}. We recall that an important feature of the (parabolic)  PFC equation \eqref{pfc} is that it enjoys the mass conservation property, namely, $$\langle\phi(t)\rangle=\langle\phi_0\rangle,\quad \forall\, t\geq 0.$$
However, the mass conservation may fail for the (hyperbolic) MPFC equation \eqref{e1}. Nevertheless, it still obeys a conservation law of different type. To this end,
 we (formally) integrate \eqref{e1} over $Q$ and using the periodic boundary condition, we find
 \be
 \beta\langle \phi_{tt}\rangle+\langle\phi_t \rangle=0,\label{conODE1}
 \ee
 which yields the following conservative property after integrating it with respect to time:
 \bl \label{conser} Let $(\phi, \phi_t)$ be a regular solution to problem \eqref{e1}--\eqref{e2}.
 Then we have
 \be
 \beta\langle\phi_t(t)\rangle+\langle\phi(t)\rangle=\beta\langle\phi_1\rangle+\langle\phi_0\rangle, \quad \forall\, t\geq 0.\label{conODE2}
 \ee
 \el

 For the sake of simplicity, we denote
 \be
 M:=\beta\langle\phi_1\rangle+\langle\phi_0\rangle\quad \text{and} \quad \mathcal{A}(t):=\langle\phi_1\rangle e^{-\frac{t}{\beta}}.\non
 \ee
  By solving the ODE system \eqref{conODE1}--\eqref{conODE2} for ($\langle\phi(t)\rangle, \langle\phi_t(t)\rangle$) with initial conditions $\langle\phi(0)\rangle=\langle\phi_0\rangle$ and $\langle\phi_t(0)\rangle=\langle\phi_1\rangle$, we have the following explicit expressions of $\langle\phi\rangle$ and $\langle\phi_t\rangle$:
 \bea
 \langle\phi_t(t)\rangle&=&\mathcal{A}(t),\quad \forall\, t\geq 0, \label{mde1}\\
 \langle\phi(t)\rangle&=&M-\beta \mathcal{A}(t),\quad \forall\, t\geq 0.\label{mde2}
 \eea
 \br
 It is easy to see that if $(\phi, \phi_t)$ is a solution to problem \eqref{e1}--\eqref{e2}   with initial data $\phi_1$ satisfying the zero-mean assumption $\langle\phi_1\rangle=0$, then $\langle \phi_t(t)\rangle=0$ and, in particular, the mass conservation  $\langle \phi(t)\rangle=\langle\phi_0\rangle$ holds  for all $t\geq 0$ (cf. \cite{WW10, WW11}).
 \er

 \textit{Energy dissipation}. Another important property of the PFC equation \eqref{pfc} is that its total energy $E(\phi)$ (cf. \eqref{EE}) is decreasing with respect to time, namely,
 \be
 \frac{d}{dt}E(\phi)=-\|\phi_t\|_{-1}^2\leq 0.\non
 \ee
 As far as the MPFC equation \eqref{e1} is concerned, in the special case such that $\langle\phi_1\rangle=0$, one can introduce the following ``pseudo energy" to problem \eqref{e1}--\eqref{e2}   (cf. \cite{WW10})
 \be
 \widetilde{\mathcal{E}}(t)=\frac{\beta}{2}\|\phi_t(t)\|_{-1}^2+ E(\phi(t)),\label{pseE}
 \ee
 which is also nonincreasing in time, i.e.,
 \be
 \frac{d}{dt} \widetilde{\mathcal{E}}(t)=-\|\phi_t\|_{-1}^2\leq 0.\label{BEL}
 \ee
 We note that the theoretical results in \cite{WW10} were only obtained under the specific assumption $\phi_1=0$ that yields $\langle\phi_1\rangle=0$.

  Hereafter, we shall \emph{not} imposed any restriction on the mean of $\phi_1$. Therefore, the dissipative property \eqref{BEL} no longer holds if $\langle\phi_1\rangle\neq 0$.  In this more general situation, problem \eqref{e1}--\eqref{e2} does not have a Lyapunov function. In order to study the long-time behavior of the system, we need to introduce a modified ``pseudo energy" to problem \eqref{e1}--\eqref{e2} instead of \eqref{pseE}, i.e.,
 \be
 \mathcal{E}(t)=\frac{\beta}{2}\|\overline{\phi_t}(t)\|_{-1}^2+ E(\phi(t)).\label{pseE1}
 \ee
 It is easy to check that the energy functional $\mathcal{E}(t)$ coincides with $\widetilde{\mathcal{E}}(t)$ (cf. \eqref{pseE}) provided that $\langle \phi_t\rangle=0$.

\textit{Notion of solution}. In order to distinguish the solutions according to their regularities, we introduce
the following terminology (reminiscent of \cite{GSSZ}, in which the Cahn-Hilliard equation with inertial term was considered).

\begin{definition}
\label{soldef}
Let $T>0$ be given (possibly $T=+\infty$).

 (1) A pair $(\phi,\phi_t)$ is called an \emph{energy solution} to problem \eqref{e1}--\eqref{e2}  , if
 \be
 (\phi,\phi_t)\in L^\infty (0,T; \mathbb{X}_0),\ \ \phi_{tt}\in L^\infty(0, T; H^{-4}_p(Q)),\ \ \mathcal{E}\in L^\infty(0,T)\label{sreg}
 \ee
  and the following relations hold
\bea&&
A_0^{-1}(\beta \phi_{tt}+\phi_t)+ \Delta ^2 \phi +2\Delta \phi+f(\phi)-\langle f(\phi)\rangle
=0,\non\\
 && \qquad \qquad \qquad \qquad\qquad \mbox{in}\  D(A_0^{-1}), \quad \text{a.e. in}\ (0,T), \label{e1ae}\\
&&
\phi|_{t=0}=\phi_0 \ \mbox{in} \ H^2_p(Q),\quad \phi_t|_{t=0}=\phi_1 \ \mbox{in} \ H^{-1}_p(Q).\label{e2ae}
\eea

(2) We say that an energy
solution to problem \eqref{e1}--\eqref{e2}   is a \emph{weak solution}, if $(\phi,\phi_t)\in L^\infty (0,T; \mathbb{X}_1)$ and $\phi_{tt}\in L^\infty(0, T; H^{-3}_p(Q))$. The relation \eqref{e1ae} and initial conditions \eqref{e2ae} can correspondingly be interpreted in a stronger sense.

(3) We say that an energy
solution to problem \eqref{e1}--\eqref{e2}   is a \emph{strong solution}, if $(\phi,\phi_t)\in L^\infty (0,T; \mathbb{X}_3)$ and $\phi_{tt}\in L^\infty(0, T; L^2_p(Q))$. In this case, $\phi$ satisfies the
equation \eqref{e1} almost everywhere in $Q\times(0,T)$.
\end{definition}

\br
We remark that the initial conditions in \eqref{e2ae} make sense, since the regularity of $(\phi, \phi_t)$ in \eqref{sreg} ensures the weak continuity $ (\phi,\phi_t)\in C_w(0,T; \mathbb{X}_0)$. Here, we denote $C_w(0,T; X)$ ($X$ being a Banach space) as the topological vector space of
all weakly continuous functions $f : [0, T )\to X$.
\er

\br
       For the sake of simplicity, in this paper we only treat the nonlinearities $f$ of the physically relevant form \eqref{f}. Moreover, in the subsequent analysis, we do not have to impose the restriction $\alpha:=1-\epsilon  >0$ as in \cite{WW10, WW11, WWL09}.
       Actually, our results hold for more general (possibly non-convex) nonlinearities.
       For instance, we can take $f$ satisfying the following assumptions:
\begin{itemize}
 \item[(H1)] $ f\in C^{2,1}_{loc}(\mathbb{R};\mathbb{R})$, $f(0)=0$,
 \item[(H2)]  $ \liminf_{|s|\to+\infty} f'(s)>0,$
 \item[(H3)]  $\liminf_{|s|\to+\infty}\frac{f(s)}{s}=+\infty$.
 \end{itemize}
 Only in the final section we shall further require $f$ to be real analytic in order to prove the convergence to single equilibria. Note that the physically relevant  nonlinearity  $f(y)= y^3+(1-\epsilon)y$ with $\epsilon\in \mathbb{R}$ also satisfies (H1)--(H3).
 \er

  %

 \section{A priori dissipative estimates}
\setcounter{equation}{0}
\noindent
In this section, we first derive some a priori dissipative estimates for the solutions to problem \eqref{e1}--\eqref{e2}. Such estimates will be crucial in the subsequent sections. The following calculations are performed in a formal way. However, they can be justified by working within a suitable Faedo--Galerkin approximation scheme (cf. Section 4 below) and then passing to the limit.

From now on, the symbols $c$ and $c_i$ ($i \in \mathbb{N}$) will denote positive constants
depending on $f, \beta, \epsilon, |Q|$, but independent of the initial datum
and of time. Their values are allowed to vary even within the same line. Analogously, $\mathcal{Q}_i: \mathbb{R}\to\mathbb{R}$  stand for generic nonnegative monotone functions. Capital letters
like $C$ or $C_i$ will be used to indicate constants that have other dependencies (in most
cases, on the initial datum). The symbol $c_Q$ will denote some embedding
constants depending only on the domain $Q$.

\bl\label{es}
Suppose $(\phi, \phi_t)$ is a regular solution to problem \eqref{e1}--\eqref{e2}. Then the following dissipative estimate holds:
 \bea
 && \|\phi(t)\|_{2}^2+ \|\overline{ \phi_t}(t)\|_{-1}^2 + \int_0^t e^{-\rho_1(t-s)}(\|\overline{\phi_t}(s)\|_{-1}^2+\| \phi(s)\|_2^2)ds \non\\
 & \leq& \mathcal{Q}_1(\|\phi_0\|_{2}, \|\overline{ \phi_1}\|_{-1}) e^{-\rho_1 t}+\rho_2, \quad \forall\, t\geq 0,\label{disa1}
 \eea
 where the positive constants $\rho_1, \rho_2$ may depend on $\beta$,  $\langle\phi_1\rangle $, $M$ and $|Q|$, but they are independent of $\|\phi_0\|_{2}$, $\|\overline{ \phi_1}\|_{-1}  $ and time $t$.
\el
\begin{proof}
We first rewrite \eqref{e1} into the following form
\be
\beta \phi_{tt}+\phi_t=-A_0 (\Delta ^2 \phi +2\Delta \phi+f(\phi)-\langle f(\phi)\rangle). \label{e1a}
\ee
Testing \eqref{e1a} by $A_0^{-1} \overline{\phi_t}$ and $A_0^{-1} \overline{\phi}$, respectively, we get
\be
\frac{d}{dt}\left(\frac{\beta}{2}\|\overline{\phi_t}\|_{-1}^2+ E(\phi)\right)+\|\overline{\phi_t}\|_{-1}^2= \mathcal{A}(t) \int_{Q} f(\phi) dx \label{d1}
\ee
and
\be
\frac{d}{dt}\left(\beta (\overline{\phi_t}, \overline\phi)_{-1}+\frac12\|\overline \phi\|_{-1}^2\right)-\beta \|\overline{\phi_t}\|^2_{-1}+\|\Delta \phi\|^2+\int_{Q} f(\phi) \overline \phi dx = 2\|\nabla \phi\|^2.\label{d2}
\ee
Multiplying \eqref{d2} by a small constant $\eta>0$ (to be determined later) and add the resulting equation to \eqref{d1}, we obtain that
\be
\frac{d}{dt}\mathcal{Y}_1(t)+\mathcal{D}_1(t)\leq \mathcal{R}_1(t),\label{d3}
\ee
where
\bea
\mathcal{Y}_1(t)&=& \frac{\beta}{2}\|\overline{\phi_t}\|_{-1}^2+ E(\phi)+  \eta \beta (\overline{\phi_t}, \overline \phi)_{-1}+\frac{\eta}{2}\|\overline \phi\|_{-1}^2,\nonumber\\
\mathcal{D}_1(t)&=& (1-\eta\beta)\|\overline{\phi_t}\|_{-1}^2+\eta \|\Delta \phi\|^2+\eta \int_{Q} f(\phi) (\phi-M) dx,\non\\
\mathcal{R}_1(t)&=& (1-\eta\beta)\mathcal{A}(t)\int_{Q} f(\phi) dx+2\eta \|\nabla \phi\|^2.\non
\eea
Using integration by parts and the Cauchy--Schwarz inequality, we get
\be
\|\nabla \phi\|^2 \leq \|\Delta \phi\|\|\phi\|\leq \frac14\|\Delta \phi\|^2+ \|\phi\|^2.\label{intpo}
\ee
Then by the expression \eqref{f}, the Young inequality and the Sobolev embedding $H^2_p(Q)\hookrightarrow L^\infty(Q)$ $(n\leq 3$), we have
\be
C(\|\phi\|_{2})\geq E(\phi)\geq \frac14\|\Delta \phi\|^2+ \|\phi\|^2-c_1.\label{esE}
\ee
It follows from the Cauchy--Schwarz inequality that
\be
\eta \beta |(\overline{\phi_t}, \overline\phi)_{-1}|\leq \frac{\beta}{4}\|\overline{\phi_t}\|_{-1}^2+\eta^2\beta\|\overline \phi\|_{-1}^2. \label{cs1}
\ee
As a result, for $\eta\in (0, \frac{1}{2\beta})$, we obtain
\be
C(\|\overline{ \phi_t}\|_{-1}, \|\phi\|_{H^2})\geq \mathcal{Y}_1(t)\geq \frac{\beta}{4}\|\overline{\phi_t}\|_{-1}^2+\frac14\|\Delta \phi\|^2+ \|\phi\|^2+\frac{\eta}{4}\|\overline\phi \|_{-1}^2-c_1.\label{Y1}
\ee
Next, it follows that
\bea
\int_{Q} f(\phi) (\phi-M) dx &\geq& -c_2\|\phi-M\|^2 +c_3\int_{Q} F(\phi) dx-c_4\non\\
&\geq & -2c_2\|\phi\|^2 +c_3\int_{Q} F(\phi) dx-(c_4+2M^2).
\eea
Thus, for $\eta\in (0, \frac{1}{2\beta})$, we deduce
\bea
\mathcal{D}_1(t)&\geq& \frac12  \|\overline{\phi_t}\|_{-1}^2+\eta\|\Delta \phi\|^2+\eta c_3\int_{Q} F(\phi)dx -2\eta c_2 \|\phi\|^2-\eta(c_4+2M^2).
\eea
By the definition of $\mathcal{Y}_1(t)$ and \eqref{cs1}, for certain small $c_5>0$, we get
\be
\frac12 \mathcal{D}_1(t) \geq  c_5\mathcal{Y}_1(t)-c_6\|\overline \phi\|_{-1}^2   -2\eta c_2 \|\phi\|^2-\eta(c_4+2M^2).
\ee
Next, for $\kappa>0$ satisfying $4\kappa |\langle\phi_1\rangle|\leq \eta c_3$, using the Young inequality and \eqref{intpo}, we can see that
\bea
\mathcal{R}_1(t)&\leq& |\mathcal{A}(t)|\int_{Q} |f(\phi)| dx+2\eta \|\nabla \phi\|^2\non\\
&\leq& \frac{\eta c_3}{2}\int_{Q} F(\phi)dx + c_\kappa |\langle\phi_1\rangle|+ \frac{\eta}{2}\|\Delta \phi\|^2+ 2\eta \|\phi\|^2,\non
\eea
where $c_\kappa$ depends on $\kappa$ and $Q$.
Collecting the above estimates together,
we infer from inequality \eqref{d3} that
\be
\frac{d}{dt}\mathcal{Y}_1(t)+2c_7(\mathcal{Y}_1(t)+\|\overline{\phi_t}\|_{-1}^2+\| \phi\|_2^2)\leq c_8 \|\phi\|^2+ c_9,\label{d3a}
\ee
where $c_7, c_8, c_9$ may depend on $\beta, \eta, Q, M$ and $|\langle\phi_1\rangle|$. Then, by the Young inequality and \eqref{esE}, we get
\be
\frac{d}{dt}\mathcal{Y}_1(t)+c_7(\mathcal{Y}_1(t)+\|\overline{\phi_t}\|_{-1}^2+\| \phi\|_2^2)\leq c_{10},\label{d3b}
\ee
which yields that
\be
\mathcal{Y}_1(t)+\int_0^t e^{-c_7(t-s)}(\|\overline{\phi_t}(s)\|_{-1}^2+\| \phi(s)\|_2^2)ds \leq \mathcal{Y}_1(0)e^{-c_7 t}+\frac{c_{10}}{c_7},\quad  \forall \, t\geq 0.
\ee
Thus, we can deduce our conclusion from \eqref{Y1}. The proof is complete.
\end{proof}

If the initial data are more regular, higher-order dissipative estimates can be obtained, namely, we have

\bl\label{es1}
Suppose $(\phi,\phi_t)$ is a regular solution to problem \eqref{e1}--\eqref{e2}. Then the following estimate holds:
 \bea
 && \|\phi(t)\|_{3}^2+ \|\overline{\phi_t}(t)\|^2 + \int_0^t e^{-\rho_3(t-s)}(\| \phi(s)\|_3^2+\|\overline{\phi_t}(s)\|^2)ds \non\\
 & \leq& \mathcal{Q}(\|\phi_0\|_{3}, \|\overline{ \phi_1}\|) e^{-\rho_3 t}+\rho_4, \quad \forall\, t\geq 0,\label{disa1a}
 \eea
 where the positive constants $\rho_3, \rho_4$  may depend on $\beta$,  $\langle\phi_1\rangle $, $M$ and $|Q|$, but independent of $\|\phi_0\|_{3}$, $\|\overline{ \phi_1}\|$ and time $t$.
\el
\begin{proof}
Testing \eqref{e1a} by $ \overline{\phi_t}$, $\overline{\phi}$, respectively, we obtain
 \bea
 && \frac{d}{dt}\left( \frac{\beta}{2}\|\overline{\phi_t}\|^2+\frac12\|\nabla \Delta \phi\|^2-\|\Delta \phi\|^2+\frac12\int_{Q} f'(\phi)|\nabla \phi|^2 dx\right)+\|\overline{\phi_t}\|^2\non\\
 &=& \frac12 \int_{Q}f''(\phi)|\nabla \phi|^2\overline{\phi_t} dx+\frac12\mathcal{A}(t)\int_{Q}f''(\phi)|\nabla \phi|^2dx\label{d1h}
  \eea
  and
  \be \frac{d}{dt}\left(\beta\int_{Q} \overline{\phi_t}\ \overline{\phi}dx +\frac12\|\overline{\phi}\|^2\right)+\|\nabla \Delta \phi\|^2
  = \beta\|\overline{\phi_t}\|^2+2\|\Delta \phi\|^2+\int_{Q}f(\phi)\Delta \phi dx.\label{d2h}
  \ee
  Multiplying \eqref{d2h} by $\eta'\in (0, \frac{1}{2\beta})$  and adding the resulting equation to \eqref{d1h}, we obtain that
\be
\frac{d}{dt}\mathcal{Y}_2(t)+\mathcal{D}_2(t)\leq \mathcal{R}_2(t),\label{d3h}
\ee
with
\bea
\mathcal{Y}_2(t)&=&\frac{\beta}{2}\|\overline{\phi_t}\|^2+\frac12\|\nabla \Delta \phi\|^2-\|\Delta \phi\|^2+\frac12\int_{Q} f'(\phi)|\nabla \phi|^2 dx\non\\
&& +\beta\eta'\int_{Q} \overline{\phi_t}\ \overline{\phi}dx +\frac{\eta'}{2}\|\overline{\phi}\|^2,\\
\mathcal{D}_2(t)&=& (1-\eta'\beta)\|\overline{\phi_t}\|^2+\eta'\|\nabla \Delta \phi\|^2,\\
\mathcal{R}_2(t)&=& \frac12 \int_{Q}f''(\phi)|\nabla \phi|^2 \overline{\phi_t} dx+\frac12\mathcal{A}(t)\int_{Q}f''(\phi)|\nabla \phi|^2dx\non\\
&&+2\eta'\|\Delta \phi\|^2+\eta'\int_{Q}f(\phi)\Delta \phi dx.
\eea
The uniform dissipative estimate \eqref{disa1} together with the Sobolev embedding $H^2(Q)\hookrightarrow L^\infty(Q)$ ($n\leq 3$) yields the (uniform) global boundedness of $\phi$, that is,
  $$
  \|\phi\|^2_{L^\infty}\leq c\mathcal{Q}_1(\|\phi_0\|_{2}, \|\overline{ \phi_1}\|_{-1}) e^{-\rho_1 t}+c\rho_2.
  $$
As a consequence, thanks to \eqref{f} and the Young inequality, we have
\be
\mathcal{Q}_2(\|\phi\|_{3}, \|\overline{ \phi_t}\|)\geq \mathcal{Y}_2(t)\geq \frac14(\beta\|\overline{\phi_t}\|^2+\|\phi\|_3^2)-c\|\phi\|_2^2
\ee
     Then, using the Sobolev embeddings once more, we are able to estimate the terms in $\mathcal{R}_2$:
    \bea
     \frac12\int_{Q}f''(\phi)|\nabla \phi|^2\overline{\phi_t} dx
    &\leq& \frac18\|\overline{\phi_t}\|^2+\|f''(\phi)\|_{L^\infty}^2\|\nabla \phi\|_{L^4}^4\non\\
    &\leq& \frac18\|\overline{\phi_t}\|^2+\mathcal{Q}_3(\|\phi\|_{L^\infty})\|\phi\|_2^4,
    \eea
    \bea
    \frac12\mathcal{A}(t)\int_{Q}f''(\phi)|\nabla \phi|^2dx&\leq& \frac12\langle\phi_1\rangle e^{-\frac{t}{\beta}}\|f''(\phi)\|_{L^\infty}\|\nabla \phi\|_{L^4}^2\non\\
    &\leq& \mathcal{Q}_3(\|\phi\|_{L^\infty})\|\phi\|_2^4+ \frac{1}{16}|\langle\phi_1\rangle|^2 e^{-\frac{2t}{\beta}},
    \eea
    \be
    2\eta'\|\Delta \phi\|^2+\eta'\int_{Q}f(\phi)\Delta \phi dx\leq c\eta'(\|\phi\|_2^2+\|f(\phi)\|^2)\leq \mathcal{Q}_4(\|\phi\|_2).
    \ee
    Thus we deduce from the expressions of $\mathcal{Y}_2$, $\mathcal{D}_2$ and $\mathcal{R}_2$ that the following inequality holds
    \be
    \frac{d}{dt}\mathcal{Y}_2+c_{11}\mathcal{Y}_2(t)\leq \mathcal{Q}_5(\|\phi\|_2)+\frac{1}{16}|\langle\phi_1\rangle|^2 e^{-\frac{2t}{\beta}},
    \ee
    where $\mathcal{Q}_5$ is a continuous monotone function satisfying $\mathcal{Q}_5(0)=0$. Applying the Gronwall inequality and the dissipative estimate \eqref{es}, we conclude that
    \bea
    \mathcal{Y}_2(t)& \leq &
    \mathcal{Y}_2(0)e^{-c_{11}t}+\int_0^te^{-c_{11}(t-s)}\left(\mathcal{Q}_5(\|\phi(s)\|_2)+\frac{1}{16}|\langle\phi_1\rangle|^2 e^{-\frac{2s}{\beta}}\right)ds\non\\
    &\leq& C(\|\phi_0\|_3, \|\overline{\phi_1}\|) e^{-c_{12}t} +  c_{13},\non
    \eea
which combined with \eqref{es} easily yields estimate \eqref{disa1a}. The proof is complete.
\end{proof}

\section{Existence and uniqueness of energy solution}\setcounter{equation}{0}
\noindent
In this section, we shall establish the existence and uniqueness of energy solutions to problem \eqref{e1}--\eqref{e2}. Namely, we prove
\begin{theorem}\label{exe}
  For any initial data $(\phi_0, \phi_1)\in \mathbb{X}_0$, problem \eqref{e1}--\eqref{e2}   admits a unique global energy solution $(\phi, \phi_t)$. Moreover, any energy solution satisfies the strong time continuity property
\be
\phi\in C^2([0,T];H^{-4}_p(Q))\cap C^1([0,T]; H^{-1}_p(Q))\cap C([0,T]; H^2_p(Q)), \label{regener}
\ee
as well as the following energy identity, for all $ s, \,t\in [0,T]$ with $s<t$,
\be
\mathcal{E}(t)=\mathcal{E}(s)-\int_s^t \|\overline{\phi_t}(\tau)\|_{-1}^2 d\tau+\int_s^t\mathcal{A}(\tau)\int_Q f(\phi(\tau))dx d\tau. \label{enereq}
\ee
 \end{theorem}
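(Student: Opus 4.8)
\textbf{Proof proposal for Theorem \ref{exe}.}

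The plan is to construct the energy solution via a Faedo--Galerkin approximation scheme, pass to the limit using the a priori estimates of Lemmas \ref{es} and \ref{es1}, then prove uniqueness directly on the equation, and finally upgrade the time regularity and establish the energy identity. First I would fix an orthonormal basis $\{w_j\}$ of $\dot{L}^2_p(Q)$ consisting of eigenfunctions of $A_0=-\Delta$ (together with the constant function $1$ to capture the mean), project \eqref{e1a} onto the span of the first $m$ of them, and solve the resulting system of second-order ODEs for the coefficients; local existence is standard and the a priori bound \eqref{disa1} (which holds uniformly for the Galerkin solutions since the test functions $A_0^{-1}\overline{\phi_t^m}$, $A_0^{-1}\overline{\phi^m}$ lie in the finite-dimensional subspace) gives global existence and a uniform bound of $(\phi^m,\phi^m_t)$ in $L^\infty(0,T;\mathbb{X}_0)$. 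From the equation itself one then reads off $\phi^m_{tt}$ bounded in $L^\infty(0,T;H^{-4}_p(Q))$: indeed $\beta\phi^m_{tt}=-\phi^m_t-A_0(\Delta^2\phi^m+2\Delta\phi^m+\overline{f(\phi^m)})$, and $\Delta^2\phi^m\in H^{-2}_p$, $A_0\Delta^2\phi^m\in H^{-4}_p$, while $f(\phi^m)$ is bounded in $L^\infty(Q)$ uniformly by the Sobolev embedding $H^2_p\hookrightarrow L^\infty$ ($n\le 3$), so $A_0\overline{f(\phi^m)}\in H^{-2}_p$. By Banach--Alaoglu and Aubin--Lions (e.g. $H^2_p\hookrightarrow\hookrightarrow H^1_p$, $\phi^m_t$ bounded in $L^\infty(0,T;H^{-1}_p)$ gives strong convergence of $\phi^m$ in $C([0,T];H^{1}_p)$, hence a.e., hence $f(\phi^m)\to f(\phi)$ strongly in, say, $L^2(Q\times(0,T))$ by dominated convergence) we extract a subsequence converging to a limit $(\phi,\phi_t)$ which satisfies \eqref{e1ae}; the initial conditions \eqref{e2ae} pass to the limit thanks to weak continuity in $\mathbb{X}_0$.

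For uniqueness, let $(\phi^1,\phi^1_t)$ and $(\phi^2,\phi^2_t)$ be two energy solutions with the same data, set $\psi=\phi^1-\phi^2$. Since both share the same mean dynamics \eqref{mde2}, we have $\langle\psi\rangle=\langle\psi_t\rangle=0$, so $\psi$ and $\psi_t$ are mean-free and $A_0^{-1}$ applies cleanly. Subtracting the two copies of \eqref{e1a} and testing with $A_0^{-1}\psi_t$ — which is legitimate at the energy level because $\psi_t\in L^\infty(0,T;\dot H^{-1}_p)$ so $A_0^{-1}\psi_t\in L^\infty(0,T;\dot H^1_p)$ and $\psi\in L^\infty(0,T;\dot H^2_p)$ — produces, after the usual integration by parts,
\[
\frac{d}{dt}\Big(\frac{\beta}{2}\|\psi_t\|_{-1}^2+\frac12\|\Delta\psi\|^2\Big)+\|\psi_t\|_{-1}^2 = 2\|\nabla\psi\|^2-(f(\phi^1)-f(\phi^2),\psi).
\]
The right-hand side is controlled by $c\|\Delta\psi\|^2$ using \eqref{intpo}, the local Lipschitz bound on $f$ together with the uniform $L^\infty$ bound on $\phi^1,\phi^2$, and $\|\psi\|\le c\|\Delta\psi\|$; Gronwall then forces $\psi\equiv 0$ (the term $-\|\psi_t\|_{-1}^2$ only helps). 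Here the spatial regularity gap is handled precisely because the sixth-order operator lets us close the estimate in the norm $\|\Delta\psi\|^2+\beta\|\psi_t\|_{-1}^2$, i.e. in the $\mathbb{X}_0$-metric, with no loss.

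To obtain the strong time continuity \eqref{regener}: from the equation $\beta\phi_{tt}=-\phi_t-A_0(\Delta^2\phi+2\Delta\phi+\overline{f(\phi)})\in L^\infty(0,T;H^{-4}_p)$, so $\phi_t\in W^{1,\infty}(0,T;H^{-4}_p)\hookrightarrow C([0,T];H^{-4}_p)$, hence $\phi\in C^2([0,T];H^{-4}_p)$; moreover $\phi\in W^{1,\infty}(0,T;H^{-1}_p)\cap L^\infty(0,T;H^2_p)$ gives $\phi\in C([0,T];H^s_p)$ for $s<2$ and $\phi\in C_w([0,T];H^2_p)$, and an interpolation between $C([0,T];H^{-4}_p)$ for $\phi_t$ and $L^\infty(0,T;H^{-1}_p)$ upgrades to $\phi_t\in C([0,T];H^{-1}_p)$ after a standard argument; finally the energy identity \eqref{enereq} recovers $t\mapsto\|\Delta\phi(t)\|^2$ continuous, which combined with weak continuity in $H^2_p$ gives $\phi\in C([0,T];H^2_p)$. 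The energy identity itself is proved by first establishing it for the Galerkin approximations (where it is an exact ODE computation: test the projected equation by $A_0^{-1}\overline{\phi^m_t}$ and integrate in time, using $\langle\phi^m_t\rangle=\langle\phi_1\rangle e^{-t/\beta}$ to produce the $\mathcal A(\tau)\int_Q f$ term exactly as in \eqref{d1}), then passing to the limit: the quadratic energy terms are handled by weak lower semicontinuity for the inequality $\mathcal{E}(t)\le\mathcal{E}(s)-\int\|\overline{\phi_t}\|_{-1}^2+\int\mathcal A\int f$, and the reverse inequality — hence equality — follows from a density/continuity argument (the standard trick of Ball, or directly from the already-established regularity $\phi_t\in C([0,T];H^{-1}_p)$ which makes the test $A_0^{-1}\overline{\phi_t}$ admissible on the limit equation and the computation rigorous).

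The main obstacle I expect is the rigorous justification of the energy identity as an equality (not merely an inequality) at the energy regularity level: the term $\frac{d}{dt}\|\overline{\phi_t}\|_{-1}^2$ requires pairing $\phi_{tt}\in L^\infty(0,T;H^{-4}_p)$ with $A_0^{-1}\overline{\phi_t}\in L^\infty(0,T;\dot H^1_p)$, and these dualities do not match directly — so one must either work at the Galerkin level and carefully pass to the limit using weak-lower-semicontinuity together with a matching upper bound, or first bootstrap enough time-continuity of $\phi_t$ (as sketched above) to make the pairing legitimate. The uniqueness estimate, by contrast, is comparatively routine once one notices the mean-free reduction and the fortunate fact that the natural energy for the difference is exactly the $\mathbb{X}_0$-norm.
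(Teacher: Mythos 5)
Your existence argument (Galerkin approximation plus the dissipative estimate of Lemma \ref{es}, Aubin--Lions for the nonlinearity, weak continuity for the initial conditions) is essentially the paper's and is fine. The genuine gap is in the uniqueness step. You test the difference equation with $A_0^{-1}\psi_t$ and assert this is ``legitimate at the energy level'' because $\psi_t\in L^\infty(0,T;\dot H^{-1}_p)$ and $\psi\in L^\infty(0,T;\dot H^2_p)$. It is not: the critical pairing is $\langle \Delta^2\psi, A_0^{-1}\psi_t\rangle$, which after one integration by parts becomes $-\langle\Delta\psi,\psi_t\rangle$ and is supposed to produce $\tfrac12\tfrac{d}{dt}\|\Delta\psi\|^2$. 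With only $\Delta\psi\in L^2_p(Q)$ and $\psi_t\in H^{-1}_p(Q)$ this duality is undefined (one needs $\psi\in H^3_p(Q)$, i.e.\ a \emph{weak} solution, for the $H^1$--$H^{-1}$ pairing to make sense), and even where it makes sense the chain rule requires separate justification. This is precisely the ``spatial regularity gap between $\phi$ and $\phi_t$'' the paper flags, and it is why the paper states that the regularity of an energy solution is not sufficient to prove uniqueness directly by the energy method. The difficulty is not, as you suggest, closing the Gronwall loop in the $\mathbb{X}_0$-norm; it is differentiating the energy along a trajectory with no more than energy regularity. The paper's way out is a Sedenko-type argument: given an arbitrary energy solution $\phi$, compare its projection $\Pi_n\phi$ with the Galerkin approximation $\phi^n$, estimate the difference $v^n$ in the \emph{weaker} norm of $H^1_p(Q)\times H^{-2}_p(Q)$ (where the testing is legitimate), and use the spectral smallness $\|\Pi_n\phi-\phi\|\leq C\lambda_n^{-1/2}$ to conclude $v^n\to0$; this forces the whole Galerkin sequence to converge to $\phi$, hence all energy solutions coincide. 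Some such device (or a transposition/mollification argument at the linearized level) is indispensable; without it your uniqueness proof does not stand.

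On the time continuity and the energy identity: your diagnosis of the obstacle is correct, but one of your proposed remedies repeats the same mistake --- knowing $\phi_t\in C([0,T];H^{-1}_p(Q))$ does not make the test function $A_0^{-1}\overline{\phi_t}$ admissible against $\Delta^2\phi$, since continuity in time does not repair the spatial mismatch. The paper instead proves an auxiliary result for the \emph{linear} equation (Lemma \ref{lin}): via m-accretive semigroup theory and a density argument in the data and the source, the linear problem with $G\in C([0,T];\dot H^{-1}_p(Q))$ has a unique solution with the full continuity \eqref{regener} satisfying the exact identity \eqref{enlin2}; the nonlinear solution is then recognized as that linear solution with $G=A_0(\langle f(\phi)\rangle-f(\phi))+\Lambda\overline\phi$, which lies in $C([0,T];\dot H^{-1}_p(Q))$ thanks to Simon's compactness theorem. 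If you keep your Galerkin-limit route you must supply the matching upper bound rigorously, which amounts to redoing that linear lemma. Finally, two algebra slips in your displayed identity: the contribution of $2\Delta\psi$ belongs inside the time derivative as $-\|\nabla\psi\|^2$ rather than as $2\|\nabla\psi\|^2$ on the right-hand side, and the nonlinear term should be paired with $\psi_t$, not with $\psi$.
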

Before giving the proof of Theorem \ref{exe}, we first establish an auxiliary result which will be useful to establish the time continuity \eqref{regener} and the energy identity \eqref{enereq} for energy solutions.

Let us consider the following linear equation
\be
\beta \psi_{tt} + \psi_t - \Delta^3 \psi - 2\Delta^2 \psi + \Lambda \overline \psi = G,  \qquad \text{ in } \; Q\times (0,T),\label{lineq1}
\ee
subject to the periodic boundary conditions and the initial conditions
\be
\psi|_{t=0}=\psi_0(x),\quad \psi_t|_{t=0}=\psi_1(x), \quad x\in Q.\label{linei}
\ee
Here, $\Lambda$ is a sufficiently large positive constant and $G$ is a given function satisfying
\be
G \in C([0,T]; \dot{H}^{-1}_p(Q)). \label{lineq2}
\ee
We have
\begin{lemma}\label{lin} For any $(\psi_0,\psi_1)\in \mathbb{X}_0$ and any function $G$ satisfying \eqref{lineq2}, there exists a unique global solution $\psi \in C([0,T];H^2_p(Q)) \cap C^1([0,T];H^{-1}_p(Q))\cap C^2([0,T]; H^{-4}_p(Q))$ to the linear problem \eqref{lineq1}--\eqref{linei}. Moreover, the following energy identity holds
\bea
{\cal E}_0(\psi(t),\psi_t(t)) &=& {\cal E}_0(\psi_0,\psi_1) - \int_0^t \Vert \overline{ \psi_t}(\tau)\Vert_{-1}^2 d\tau\cr \non\\
&& + \int_0^t \int_Q A_0^{-\frac12} G(\tau) A^{-\frac12 }_0 \overline{\psi_t}(\tau) dx d\tau, \quad \forall\, t\in [0,T], \label{enlin2}
\eea
where
\be
{\cal E}_0(\psi,\psi_t)= \frac{\beta}{2}\Vert \overline{\psi_t}\Vert_{-1}^2 +{1\over 2} \| \Delta\psi\|^2 - \| \nabla\psi\|^2 + {\Lambda\over 2}\| \overline{\psi} \|_{-1}^2.
 \label{enlin1}
\ee
\end{lemma}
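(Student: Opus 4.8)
The plan is to treat the linear hyperbolic-type problem \eqref{lineq1}--\eqref{linei} by a Faedo--Galerkin scheme based on the eigenfunctions of $-\Delta$ with periodic boundary conditions. Decompose $\psi = \langle\psi\rangle + \overline{\psi}$: for the mean, integrating \eqref{lineq1} over $Q$ gives the linear ODE $\beta\langle\psi\rangle'' + \langle\psi\rangle' = 0$ (since $G$ has zero mean and all the spatial operators annihilate constants modulo the mean), which is solved explicitly with $\langle\psi(0)\rangle=\langle\psi_0\rangle$, $\langle\psi_t(0)\rangle=\langle\psi_1\rangle$, so the mean component is globally smooth in $t$. For the zero-mean part, project onto the span of the first $N$ nontrivial eigenfunctions and solve the resulting finite-dimensional linear second-order ODE system; this is globally solvable since the system is linear. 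The natural energy estimate comes from testing the Galerkin equations by $A_0^{-1}\overline{\psi_t^N}$, which produces exactly the identity \eqref{enlin2} at the Galerkin level with $\mathcal{E}_0$ as in \eqref{enlin1}. Because $\Lambda$ is chosen large enough, the term $-\|\nabla\psi\|^2$ is dominated: using $\|\nabla\psi\|^2 \le \|\Delta\psi\|\,\|\psi\| \le \tfrac14\|\Delta\psi\|^2 + \|\psi\|^2$ and $\|\overline{\psi}\|^2 \le c_Q\|\overline{\psi}\|_{-1}\|\Delta\psi\| $ (interpolation), one checks that $\mathcal{E}_0$ is bounded below by $c(\|\overline{\psi_t}\|_{-1}^2 + \|\Delta\psi\|^2 + \|\overline{\psi}\|_{-1}^2)$ up to a controllable constant times $\langle\psi\rangle^2$. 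The forcing term is handled by Cauchy--Schwarz in the $\|\cdot\|_{-1}$ inner product, $\int_Q A_0^{-1/2}G\,A_0^{-1/2}\overline{\psi_t}\,dx \le \|G\|_{-1}\|\overline{\psi_t}\|_{-1}$, together with Gronwall, using \eqref{lineq2} to bound $\sup_{[0,T]}\|G\|_{-1}$. This yields uniform-in-$N$ bounds for $\overline{\psi^N}$ in $L^\infty(0,T;H^2_p)$ and $\overline{\psi_t^N}$ in $L^\infty(0,T;H^{-1}_p)$, and then from the equation $\beta\psi_{tt}^N = -\psi_t^N + \Delta^3\psi^N + 2\Delta^2\psi^N - \Lambda\overline{\psi^N} + G$ a bound for $\psi_{tt}^N$ in $L^\infty(0,T;H^{-4}_p)$.

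Next I would pass to the limit: by Banach--Alaoglu extract a subsequence converging weakly-$*$ in these spaces, plus (by Aubin--Lions, using $\phi_{tt}^N$ bounded in $H^{-4}_p$ and $\phi^N$ bounded in $H^2_p$) strong convergence of $\psi^N$ in, say, $C([0,T];H^{-1}_p)$ and of $\psi_t^N$ in $C([0,T];H^{-4}_p)$. Since the equation is linear, passing to the limit in \eqref{lineq1} in the sense of $D(A_0^{-1})$ distributions is immediate, and the initial conditions \eqref{linei} are inherited. For the strong continuity claim $\psi\in C([0,T];H^2_p)\cap C^1([0,T];H^{-1}_p)\cap C^2([0,T];H^{-4}_p)$: the last two follow from the regularity already obtained plus the equation; the continuity into $H^2_p$ (not merely $C_w$) is the delicate point and is obtained in the standard way for hyperbolic problems — one shows $t\mapsto \mathcal{E}_0(\psi(t),\psi_t(t))$ is continuous by combining the already-known weak continuity $\psi\in C_w([0,T];H^2_p)$, which gives lower semicontinuity of $\|\Delta\psi(t)\|^2$, with the energy identity \eqref{enlin2} (valid in the limit because the forcing and dissipation terms pass to the limit by strong convergence of $\overline{\psi_t^N}$ in $L^2(0,T;H^{-1}_p)$, itself a consequence of an Aubin--Lions argument on $\overline{\psi_t^N}$); continuity of the energy plus weak continuity then upgrades to norm continuity in $H^2_p$, and $C^1$-continuity into $H^{-1}_p$ follows similarly.

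To get the energy identity \eqref{enlin2} rigorously for the limit solution, rather than only for the Galerkin approximations, the cleanest route is to note that \eqref{enlin2} holds for $\psi^N$ exactly, then use the strong convergences just described — $\overline{\psi_t^N}\to\overline{\psi_t}$ in $C([0,T];H^{-4}_p)$ and in $L^2(0,T;H^{-1}_p)$, $G$ fixed in $C([0,T];H^{-1}_p)$, and lower/upper semicontinuity controlled by the convergence of $\mathcal{E}_0(\psi^N(t),\psi_t^N(t))$ — to pass to the limit in each term. Alternatively, once continuity into $H^2_p\times H^{-1}_p$ is established, one can test the limit equation directly by $A_0^{-1}\overline{\psi_t}$ (which is now an admissible test function) and integrate, but some mollification in time is needed to justify the pairing $\langle\psi_{tt}, A_0^{-1}\overline{\psi_t}\rangle$; I would use the regularization/approximation lemma of Lions--Magenes type for this.

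Uniqueness is easy: the difference of two solutions solves \eqref{lineq1}--\eqref{linei} with $G=0$ and zero initial data, so the energy identity \eqref{enlin2} gives $\mathcal{E}_0(\psi(t),\psi_t(t)) = -\int_0^t\|\overline{\psi_t}\|_{-1}^2\,d\tau \le \mathcal{E}_0(0,0)=0$, and since $\mathcal{E}_0$ is coercive (up to the harmless mean term, which here is zero because both means satisfy the same ODE with the same data) we conclude $\psi\equiv 0$. The main obstacle I anticipate is not any single estimate but the careful handling of the regularity gap — proving genuine (norm, not weak) continuity $\psi\in C([0,T];H^2_p)$ and $\psi_t\in C([0,T];H^{-1}_p)$ and justifying the energy identity at the limit level — which is the usual subtlety for second-order-in-time equations with a lower-order phase space for the velocity; everything else is routine linear PDE theory once $\Lambda$ is fixed large enough to make $\mathcal{E}_0$ coercive.
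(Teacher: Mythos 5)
Your overall architecture (approximation, the coercivity of $\mathcal{E}_0$ for large $\Lambda$, the separate ODE for the mean, uniqueness via the energy identity) is sound, and you correctly isolate the real difficulty: establishing the energy identity \eqref{enlin2} and the \emph{norm} continuity $\psi\in C([0,T];H^2_p)$, $\psi_t\in C([0,T];H^{-1}_p)$ at the limit level. But the main mechanism you propose for closing this does not work as stated. You claim strong convergence of $\overline{\psi^N_t}$ in $L^2(0,T;H^{-1}_p(Q))$ ``by an Aubin--Lions argument.'' The available bounds are $\psi^N_t$ in $L^\infty(0,T;H^{-1}_p)$ and $\psi^N_{tt}$ in $L^\infty(0,T;H^{-4}_p)$; Aubin--Lions then gives compactness only in $C([0,T];H^{-1-\sigma}_p)$ for $\sigma>0$, i.e.\ strictly \emph{below} the $H^{-1}$ level at which both the dissipation integral $\int_0^t\|\overline{\psi_t}\|_{-1}^2$ and the kinetic term $\frac{\beta}{2}\|\overline{\psi_t}(t)\|_{-1}^2$ live. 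There is no extra spatial regularity of $\psi_t^N$ to trade for this, so weak-$*$ convergence only yields the energy \emph{inequality}, and your subsequent ``energy continuity $+$ weak continuity $\Rightarrow$ norm continuity'' step, which presupposes the identity, is left hanging. Your fallback --- testing the limit equation by $A_0^{-1}\overline{\psi_t}$ after a Lions--Magenes-type time mollification --- is indeed the standard correct tool for linear second-order problems and would repair the argument, but as written it is only gestured at, and in your ordering it is made to depend on the strong continuity that it is supposed to deliver.

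The paper takes a different and cleaner route that avoids the issue entirely: instead of projecting the equation, it approximates the \emph{data}, taking $(\psi_0^m,\psi_1^m)\in\mathbb{X}_1$ and $G^m\in C([0,T];\dot H^3_p(Q))$, and solves each regularized problem exactly by semigroup theory ($\mathcal{L}$ is m-accretive on $\mathbb{X}_1$ with domain $\mathbb{X}_3$), so that the energy identity is rigorous for each $\psi^m$. Testing the equation for the \emph{difference} $\psi^m-\psi^{m'}$ by $A_0^{-1}\overline{\zeta_t}$ then shows $\{\psi^m\}$ is Cauchy in $C([0,T];H^2_p)\cap C^1([0,T];H^{-1}_p)$, because by linearity the difference solves the same equation with difference data; uniform convergence in the strong norm hands you both the continuity claims and the passage to the limit in \eqref{enlin2} for free. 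Incidentally, your Galerkin scheme could be salvaged in the same spirit without any mollification: since all the spatial operators in \eqref{lineq1} are diagonal in the eigenbasis of $-\Delta$, the Galerkin system decouples mode by mode, $\psi^N-\psi^{N'}$ solves the equation in the span of the intermediate modes with data $(\Pi_N-\Pi_{N'})(\psi_0,\psi_1,G)$, and the basic energy estimate shows $\{\psi^N\}$ is Cauchy in $C([0,T];H^2_p)\cap C^1([0,T];H^{-1}_p)$; it is the weak-compactness passage to the limit, not the Galerkin scheme itself, that loses the identity.
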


\begin{proof}
The proof of existence can be done through a density argument combined with the semigroup theory. We first approximate $G$ and the initial data with the following sequences
$$ \{G^m\}\subset C([0,T]; \dot H^3_p(Q)), \quad  G^m\to G\quad \text{strongly in}\ \ C([0,T]; \dot{H}^{-1}_p(Q)),$$
$$ \{(\psi_{0}^m, \psi_{1}^m)\}\subset \mathbb{X}_1, \quad (\psi_{0}^m, \psi_{1}^m)\to (\psi_0,\psi_1)\quad \text{strongly in} \ \ \mathbb{X}_0.$$
Let us denote by $\Psi^m=(\psi^m, \beta \psi_{t}^m)$ a possible solution to the following linear Cauchy problem
\be
\begin{cases}
&\displaystyle{\frac{d}{dt}}\Psi^m+\mathcal{L}\Psi^m=\mathcal{G}^m,\\
& \Psi^m=(\psi_{0}^m, \psi_{1}^m)\in \mathbb{X}_1,
\end{cases}\label{linaa}
\ee
with
\be
\mathcal{L}=\left(
\begin{array}{cc}
0&-\beta^{-1} I\\
-\Delta^3-\Delta^2+\Lambda I & \beta^{-1} I
\end{array}
\right),\quad \mathcal{G}^m=\left(\begin{array}{c}
0\\
\Lambda\langle\psi^m\rangle+ G^m
\end{array}
\right).\non
\ee
For sufficiently large $\Lambda>0$, it is easy to verify that $\mathcal{L}$ is a m-accretive operator on $\mathbb{X}_1$ with $D(\mathcal{L})=\mathbb{X}_3$. As a consequence, the linear problem \eqref{linaa} admits a unique (global) strong solution $\Psi^m\in C([0,T]; \mathbb{X}_3)\cap C^1([0,T];\mathbb{X}_1)$. Let us now write \eqref{lineq1} for the couple of
indices $m$ and $m'$. Then we take the difference, setting $\zeta=\psi^m-\psi^{m'}$, $\zeta_0=\psi_{0}^m-\psi_0^{m'}$, $\zeta_1=\psi_{1}^m-\psi_1^{m'}$, and we test the difference equation by $A_0^{-1} \overline{\zeta_t}$. Integrating the resulting equation in time over $(0,t)$ for $0\leq t \leq T$, we get
\be
\mathcal{E}_0(\zeta, \zeta_t)+\int_0^t \|\overline{\zeta_t}\|_{-1}^2 d\tau= \mathcal{E}_0(\zeta_0, \zeta_1)+\int_0^t A_0^{-\frac12} (G^m-G^{m'}) A_0^{-\frac12} \overline{\zeta_t} d\tau.\label{difee}
\ee
We infer from the definition of $\mathcal{E}_0$ that for sufficiently large $\Lambda>0$, there exist $c, c'>0$ such that
\be
c(\|\psi\|_2^2+\|\psi_t\|_{-1}^2)\geq {\cal E}_0(\psi,\psi_t)\geq  c'\|\overline{\psi}\|_2^2 +\frac{\beta}{2}\Vert \overline{\psi_t}(t)\Vert_{-1}^2.\label{EE0}
\ee
As a consequence, it follows from \eqref{difee} that
\bea
 && \Big(c'\|\overline {\zeta}(t) \|_2^2+\frac{\beta}{2}\|\overline{\zeta_t}(t)\|_{-1}^2\Big)+ \frac12 \int_0^t \|\overline{\zeta_t}(\tau)\|_{-1}^2 d\tau \non\\
 &\leq& c (\|\zeta_0\|_2^2+\|\zeta_1\|_{-1}^2)+ \frac12 \int_0^t \|G^m(\tau)-G^{m'}(\tau)\|_{-1}^2 d\tau.\label{mde0a}
\eea
Moreover, since $\langle G^m\rangle=0$ ($m\in \mathbb{N}$) by assumption, recalling \eqref{mde1} and \eqref{mde2}, we easily see that for $t\in[0,T]$
 \bea
 \langle\zeta_t(t)\rangle&=&\langle\zeta_1\rangle e^{-\frac{t}{\beta}}, \label{mde1a}\\
 \langle\zeta(t)\rangle&=&\beta\langle \zeta_1\rangle +\langle \zeta_0\rangle-\beta \langle\zeta_1\rangle e^{-\frac{t}{\beta}}.\label{mde2a}
 \eea
Thus, taking the supremum with respect to $t\in [0,T]$ in \eqref{mde0a}--\eqref{mde2a}, we deduce that $ \{\psi^m\}$ is  a Cauchy
sequence with respect to the norm of $C([0,T];H^2_p(Q)) \cap C^1([0,T];H^{-1}_p(Q))$. Note that at this stage we only need $G \in L^2(0,T; \dot{H}^{-1}_p(Q))$ instead of \eqref{lineq2}. Besides, the convergence of $\psi_{tt}^m$ can
be proved by comparison in \eqref{lineq1}. Therefore, $ \{\psi^m\}$  strongly converges to a (unique) solution $\psi$ fulfilling \eqref{regener} and the equation \eqref{lineq1} is satisfied in the following sense
 \be
A_0^{-1}(\beta \psi_{tt} + \psi_t) + \Delta^2 \psi + 2\Delta \psi + \Lambda A_0^{-1}\overline \psi= A_0^{-1} G
 \quad\hbox{ in } D(A_0^{-1}),\; \hbox{a.e. in}\, (0,T).\non
\ee
  On account of the above strong convergence, it is not difficult to show that $\psi$ satisfies the energy identity \eqref{enlin2}. Such an identity can be (formally) obtained by multiplying equation \eqref{lineq1} by $A^{-1}_0 \overline\psi_t$. The proof is complete.
\end{proof}

\textbf{Proof of Theorem \ref{exe}}. The proof relies on a suitable Faedo--Galerkin
approximation scheme.  We consider the eigenvalue problem $-\Delta w= \lambda w$ subject to periodic boundary conditions. It is well known that there exist two sequences $\{\lambda_n\}_{n=1,2,...}$ and $\{w_n\}_{n=1,2,...}$ such that, for every
$n\geq  1$, $\lambda_n\geq 0$ is an eigenvalue and $w_n\neq 0$ is a corresponding eigenfunction, the sequence ${\lambda_n}$ is
nondecreasing, tending to infinity as $n\to +\infty$, and the sequence $\{w_n\}$ is orthonormal and complete in $L^2_p(Q)$. We notice that
$\lambda = 0$ is an eigenvalue, whence $\lambda_1 = 0$, and that any non-zero constant is an eigenfunction (i.e., $w_1=1$). For every $i > 1$, $w_i$ cannot be a constant and $\langle w_i\rangle =0$, whence $\lambda_i=\int_Q|\nabla w_i|^2dx>0$. Moreover, as $w_1=1$ is a constant
and $\{w_n\}$ is orthonormal in $L^2_p(Q)$, we easily deduce that
$ A_0^{-1} w_i=\lambda_i^{-1} w_i$ for every $i>1$.

For any $n\geq 1$, we introduce the finite-dimensional space $W_n={\rm span}\{w_1,...,w_n\}$ and $\Pi_n$ the orthogonal projection on $W_n$. It is obvious that $W_n\subset C^\infty(Q)$. Then we consider the approximate problem $(P_n)$: looking for $t_n>0$ and $\xi_i\in C^2([0,t_n])$ such that $\phi^n(t):=\sum_{i=1}^n\xi_i(t)w_i$ solves
\bea&&
A_0^{-1}(\beta \phi^n_{tt}+\phi^n_t)+ \Delta ^2 \phi^n +2\Delta \phi^n+\Pi_n(f(\phi^n)-\langle f(\phi^n)\rangle)
=0, \label{e1aea}\\
&&
\phi^n|_{t=0}=\Pi_n\phi_0(x),\quad \phi^n_t|_{t=0}=\Pi_n \phi_1(x),\label{e2aea}
\eea
where both relations are intended as equalities in $W_n$.
It is easy to verify that Problem $(P_n)$ admits a unique global solution, which satisfies the energy estimate (cf. \eqref{disa1}) uniformly with respect to $n$ and time $t$. Standard compactness arguments (e.g., the Aubin-Lions lemma) permit to
take the limit of \eqref{e1aea}--\eqref{e2aea} at least for a subsequence. Thus, as a
consequence, we obtain existence of a global energy solution $(\phi, \phi_t)$ to problem \eqref{e1}--\eqref{e2}.

The regularity of energy solution is not sufficient for us to prove the uniqueness directly using the energy method. To this end, we use a non-standard argument developed in \cite{Se} (cf. also \cite{GSZ}). Let $(\phi, \phi_t)$ be any energy solution to problem \eqref{e1}--\eqref{e2}   on $[0,T]$, which is the limit of proper subsequence of the approximate solutions $(\phi^n, \phi^n_t)$. Define $(\Phi^n, \Phi_t^n)=\Pi_n (\phi, \phi_t)$. Then consider the projection of equation \eqref{e1} which, written for the difference $(v^n, v_t^n)=(\Phi^n-\phi^n, \Phi_t^n-\phi^n_t)$, reads
\bea
&& A_0^{-1}(\beta v^n_{tt}+v^n_t)+ \Delta ^2 v^n +2\Delta v^n\non\\
&&\qquad = -\Pi_n(f(\phi)-\langle f(\phi)\rangle-f(\phi^n)+\langle f(\phi^n)\rangle), \label{e1ad}\\
&&
v^n|_{t=0}=0,\quad v^n_t|_{t=0}=0.\label{e2ad}
\eea
It is obvious that the conservative relation holds so that
\be
\langle v^n(t)\rangle=\langle v^n_t(t)\rangle=0, \quad \forall\, t\geq 0.\label{dm}
\ee
Since $ (v^n, v_t^n) $ are regular, we can test \eqref{e1ad} by $A_0^{-1}v^n_t$. This entails
\bea
&& \frac{d}{dt}\left(\frac{\beta}{2}\|v^n_t\|_{-2}^2+ \frac12\|\nabla v^n\|^2-\|v^n\|_{-1}^2 \right)+\|v^n_t\|_{-2}^2\non\\
&=& \int_Q \Pi_n(f(\phi^n)-f(\Phi^n)) A_0^{-1}v^n_t dx+
\int_Q \Pi_n(f(\Phi^n)-f(\phi)) A_0^{-1}v^n_t dx\non\\
&\leq& \|f(\phi^n)-f(\Phi^n)\|\|v^n_t\|_{-2}+ \|f(\Phi^n)-f(\phi)\|_{-1}\|v^n_t\|_{-1}.
\label{d1d}
\eea
We recall that the $L^\infty(0,T;\mathbb{X}_0)$-norm of $(\Phi^n, \Phi_t^n)$ and $(\phi^n,\phi^n_t)$ are bounded (uniformly with respect to $n$) as well as $(\phi,\phi_t)$. Therefore, we have
 \bea
 \|f(\phi^n)-f(\Phi^n)\|
 &\leq& C \left\|\int_0^1 f'(\tau \phi^n+(1-\tau)\Phi^n) (\phi^n-\Phi^n)d\tau\right\|
 \non\\
 &\leq& C\|f'\|_{L^3_p(Q)}\|v^n\|_{L^6_p(Q)}\non\\
 &\leq& C\|\nabla v^n\|\non
 \eea
 and
 \bea
 &&\|f(\Phi^n)-f(\phi)\|_{-1}\|v^n_t\|_{-1}\non\\
 &\leq& C \left\|\int_0^1 f'(\tau \Phi^n+(1-\tau)\phi) (\Phi^n-\phi)d\tau\right\|_{L^{\frac65}_p(Q)}(\|\Phi^n_t\|_{-1}+\|\phi_t\|_{-1})\non\\
 &\leq& C\|f'\|_{L^3_p(Q)}\|\Phi^n-\phi\|\non\\
 &\leq& C\lambda_n^{-\frac12}\|\Phi^n-\phi\|_{1}\non\\
 &\leq&
  C\lambda_n^{-\frac12},\non
 \eea
 where the constant $C$ in the above estimates is independent of $n$. Using the Young inequality and the Poincar\'e inequality (cf. \eqref{dm}), we infer from the above estimates that
 \be \frac{d}{dt}\left(\frac{\beta}{2}\|v^n_t\|_{-2}^2+ \frac12\|\nabla v^n\|^2-\|v^n\|_{-1}^2 \right)+\frac12\|v^n_t\|_{-2}^2\leq C\|\nabla v^n\|^2+C\lambda_n^{-\frac12}
  \ee
  and integrating with respect to time, we get
  \bea
   && \frac{\beta}{2}\|v^n_t(t)\|_{-2}^2+ \frac12\|\nabla v^n(t)\|^2+\frac12 \int_0^t\|v^n_t(\tau)\|_{-2}^2d\tau \non\\
   &\leq& C\int_0^t \|\nabla v^n(\tau)\|^2d\tau +C\lambda_n^{-\frac12}t+ \|v^n(t)\|_{-1}^2.\label{aa1}
  \eea
 Observe now that
  \bea
  \|v^n(t)\|_{-1}^2&\leq& C\|v^n(t)\|\|v^n(t)\|_{-2}\non\\
  &\leq& C\|\nabla v^n(t)\|t^\frac12\left(\int_0^t\|v^n_t(\tau)\|_{-2}^2d\tau\right)^\frac12\non\\
  &\leq& \frac14\|\nabla v^n(t)\|^2+Ct\int_0^t\|v^n_t(\tau)\|_{-2}^2d\tau,\quad \forall\ t\in [0,T].
  \eea
  Thus we infer
  \bea
  && \frac{\beta}{2}\|v^n_t(t)\|_{-2}^2+ \frac14\|\nabla v^n(t)\|^2\non\\
  &\leq& \left(Ct-\frac12\right)\int_0^t\|v^n_t(\tau)\|_{-2}^2d\tau+C\int_0^t \|\nabla v^n(\tau)\|^2d\tau
  +C\lambda_n^{-\frac12}t\non\\
  &\leq& C(1+t)\int_0^t\left(\frac{\beta}{2}\|v^n_t(\tau)\|_{-2}^2+ \frac14\|\nabla v^n(\tau)\|^2\right)d\tau+C\lambda_n^{-\frac12}t\non
  \eea
  and by the Gronwall inequality we deduce
  \be
  \frac{\beta}{2}\|v^n_t(t)\|_{-2}^2+ \frac14\|\nabla v^n(t)\|^2\leq C \lambda_n^{-\frac12}t e^{C(1+t)t}.\non
  \ee
  Therefore, we obtain that
for $n\to +\infty$
 \be
 (v^n, v^n_t)\to 0, \quad \text{strongly in}\ L^\infty(0,T; H^1_p(Q)\times H^{-2}_p(Q)).
 \ee
Since we already know that $(\Phi^n, \Phi_t^n)\to (\phi, \phi_t)$ by definition, it follows by comparison that $(\phi^n, \phi^n_t)\to (\phi, \phi_t)$. Hence, the whole sequence $(\phi^n, \phi^n_t)$ converges to $(\phi, \phi_t)$ as $n\to +\infty$. This indeed entails the
uniqueness of  energy solution $(\phi, \phi_t)$.

Observe now that the unique energy solution can be seen as a solution to the linear
equation \eqref{lineq1} with
\be
G := A_0(\langle f(\phi)\rangle - f(\phi)) + \Lambda \overline \phi,
\label{FF}
\ee
where $\Lambda>0$ is sufficiently large. From a well-known embedding theorem due to J. Simon \cite{JS}
we see that the energy solution $\phi$ has the continuity property
$\phi\in C([0,T]; H^{2-\sigma}_p(Q))$ for $0<\sigma\ll 1$, which implies that $G \in C([0,T]; \dot{H}^{-1}_p(Q))$ (here we have used the Sobolev embedding $H^{2-\sigma}_p(Q)\hookrightarrow L^\infty_p(Q)$ for $n\leq 3$ and $\sigma$ small). Thus, we can conclude from Lemma \ref{lin} that \eqref{regener} holds. On the other hand, using a similar approximation procedure as in the proof of Lemma \ref{lin}, we also infer that $\phi$ fulfills the following energy identity
\bea
{\cal E}_0(\phi(t),\phi_t(t)) &=& {\cal E}_0(\phi(s),\phi_t(s)) - \int_s^t \Vert \overline{ \phi_t}(\tau)\Vert_{-1}^2 d\tau\cr \non\\
&&+ \int_s^t \int_Q A_0^{-\frac12} G(\tau) A^{-\frac12 }_0 \overline{\phi_t}(\tau) dx d\tau, \quad \forall\, s, t\in [0,T],\, s<t,\label{enlin2a}
\eea
where $G$ is given by \eqref{FF}. On the other hand, we have
\bea
&& \int_s^t \int_Q A_0^{-\frac12} G(\tau) A^{-\frac12 }_0 \overline{\phi_t}(\tau) dx d\tau\non\\
&=& \int_s^t \int_Q(\langle f(\phi(\tau))\rangle - f(\phi(\tau)))  \overline{\phi_t}(\tau) dx d\tau + \frac{\Lambda}{2} \|\overline \phi(t)\|^2-\frac{\Lambda}{2} \|\overline {\phi}(s)\|^2\non\\
&=& -\int_Q F(\phi(t)) dx+ \int_Q F(\phi(s)) dx+ \int_s^t \mathcal{A}(\tau)\int_Q f(\phi(\tau)) dxd\tau\non\\
&&\quad + \frac{\Lambda}{2} \|\overline \phi(t)\|^2-\frac{\Lambda}{2} \|\overline {\phi}(s)\|^2,\non
\eea
so that \eqref{enereq} follows from \eqref{enlin2a}. The proof of Theorem \ref{exe} is complete. $\square$

\textit{The dynamical system}. We can now associate with problem \eqref{e1}--\eqref{e2}   a semiflow $\mathcal{S}=(\mathbb{X}_0,S(t))$  where $S(t): \mathbb{X}_0\to \mathbb{X}_0$ is the semigroup defined as follows
 $$S(t)(\phi_0, \phi_1)=(\phi(t), \phi_t(t)), \quad \forall \, t\geq 0,$$
$\phi$ being the energy solution given by Theorem \ref{exe}.

From the energy identity satisfied by the energy solution, we are able to check that $S(t)$ is indeed uniformly Lipschitz continuous on bounded balls of $\mathbb{X}_0$, for any fixed $t\geq 0$. Let us consider two pairs of initial data $(\phi_{0j}, \phi_{1j})$ ($j=1,2$) in a bounded set of $\mathbb{X}_0$ and write the corresponding equation for $\tilde{\phi}=\phi_1-\phi_2$, $\phi_j$ being the solution corresponding to the initial datum $(\phi_{0j}, \phi_{1j})$.
Then, we can take advantage of the associated energy identity with $G:=A_0(\langle f(\phi_1)\rangle -\langle f(\phi_2)\rangle - f(\phi_1) +f(\phi_2)) + \Lambda (\overline \phi_1 - \overline \phi_2)$ as a source term in \eqref{enlin2} so that
\bea
{\cal E}_0(\tilde{\phi}(t),\tilde{\phi}_t(t)) &=& {\cal E}_0(\tilde{\phi}_0,\tilde{\phi}_1) - \int_0^t \Vert \overline{ \tilde{\phi}_t}(\tau)\Vert_{-1}^2 d\tau  + \int_0^t \int_Q A_0^{-\frac12} G(\tau) A^{-\frac12 }_0 \overline{\tilde{\phi}_t}(\tau) dx d\tau\non\\
&\leq& {\cal E}_0(\tilde{\phi}_0,\tilde{\phi}_1)+ C \int_0^t \|A_0^{-\frac12} G(\tau)\|^2 d\tau \non\\
&\leq& {\cal E}_0(\tilde{\phi}_0,\tilde{\phi}_1)+ C \int_0^t \|\tilde{\phi}(\tau)\|_1^2 d\tau\non\\
&\leq& {\cal E}_0(\tilde{\phi}_0,\tilde{\phi}_1)+ C \int_0^t {\cal E}_0(\tilde{\phi}(\tau),\tilde{\phi}_t(\tau)) d\tau+ C\int_0^t |\langle\tilde{\phi}(\tau)\rangle|^2 d\tau.\non
\eea
On the other hand, we have
$\langle \tilde{\phi}(t)\rangle = \beta\langle \tilde{\phi}_1\rangle+\langle \tilde{\phi}_0\rangle-\beta \langle \tilde{\phi}_1\rangle e^{-\frac{t}{\beta}}$.
As a consequence, we obtain
\bea
&&{\cal E}_0(\tilde{\phi}(t),\tilde{\phi}_t(t))+ |\langle\tilde{\phi}(t)\rangle|^2+\int_0^t \Vert \overline{ \tilde{\phi}_t}(\tau)\Vert_{-1}^2 d\tau \non\\
&\leq& {\cal E}_0(\tilde{\phi}_0,\tilde{\phi}_1)+C|\langle \tilde{\phi}(0)\rangle|^2+ C \int_0^t {\cal E}_0(\tilde{\phi}(\tau),\tilde{\phi}_t(\tau)) + |\langle\tilde{\phi}(\tau)\rangle|^2 d\tau.\non
\eea
Thus, by the Gronwall inequality, \eqref{EE0} and the fact $\langle \tilde{\phi}(t)\rangle = \beta\langle \tilde{\phi}_1\rangle+\langle \tilde{\phi}_0\rangle-\beta \langle \tilde{\phi}_1\rangle e^{-\frac{t}{\beta}}$, we conclude that
\bea
&&
\|(\phi_1-\phi_2, \phi_{1t}-\phi_{2t})(t)\|_{\mathbb{X}_0}^2+ \int_0^t \Vert ( \phi_{1t}-\phi_{2t}) (\tau)\Vert_{-1}^2 d\tau\non\\
&\leq& L_1 e^{L_2t} \|(\phi_{10}-\phi_{20}, \phi_{11}-\phi_{21})\|_{\mathbb{X}_0}^2 ,\label{LipX0}
\eea
where $L_1$, $L_2$ are positive constants depending on the $\mathbb{X}_0$-norms of the initial data as well as on $\beta$, $\epsilon$, $Q$ and $f$.

Next, due to the dissipative estimate in Lemma \ref{es}, we can state some dissipative properties of the dynamical system defined on a suitable phase space. Recalling the conservative property \eqref{conODE2}, we have to work on the following subset of $\mathbb{X}_0$:   $$\mathbb{X}^{M,M'}_0=\{(u,v)\in \mathbb{X}_0: \ |\beta\langle v\rangle+\langle u\rangle|\leq M, \ |\langle v\rangle|\leq M'\}.$$

\begin{theorem}\label{abs}
The semiflow $\mathcal{S}$ is
uniformly dissipative on the phase space $\mathbb{X}_0^{M,M'}$. Namely, there exists a constant $R_0$ independent of the initial data such that, for all bounded set $B_0\subset \mathbb{X}_0^{M,M'}$,  there exists a $t_{B_0}>0$ such that, for all $(\phi_0, \phi_1)\in \mathbb{X}_0^{M,M'}$,
 \be
\|S(t)(\phi_0, \phi_1)\|_{\mathbb{X}_0}\leq R_0,\quad \forall\,t \geq t_{B_0}.\non
\ee
\end{theorem}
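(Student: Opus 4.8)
The plan is to obtain Theorem \ref{abs} as a direct consequence of the dissipative estimate \eqref{disa1} of Lemma \ref{es}. That estimate was derived for the Faedo--Galerkin approximations $(\phi^n,\phi^n_t)$ uniformly with respect to $n$ and to time, hence, by weak lower semicontinuity of the norms involved, it is inherited by the unique energy solution $(\phi,\phi_t)=S(\cdot)(\phi_0,\phi_1)$ produced by Theorem \ref{exe}.

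First I would check that, once we restrict to $\mathbb{X}_0^{M,M'}$, the two constants $\rho_1,\rho_2$ in \eqref{disa1} can be chosen \emph{uniformly} with respect to the initial datum. Going through the proof of Lemma \ref{es}, the only places in which the data influence $\rho_1,\rho_2$ are through $|\langle\phi_1\rangle|$ and through the conserved quantity $\beta\langle\phi_1\rangle+\langle\phi_0\rangle$ (denoted $M$ there), e.g. in the term $c_4+2M^2$, in the smallness condition $4\kappa|\langle\phi_1\rangle|\le\eta c_3$ used to fix $\kappa$, and in the resulting constant $c_\kappa$; moreover this dependence is monotone non-decreasing in these two quantities. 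Since every $(\phi_0,\phi_1)\in\mathbb{X}_0^{M,M'}$ satisfies $|\langle\phi_1\rangle|\le M'$ and $|\beta\langle\phi_1\rangle+\langle\phi_0\rangle|\le M$, we may bound these quantities by $M'$ and $M$ and thereby select $\rho_1=\rho_1(\beta,M,M',|Q|)>0$ and $\rho_2=\rho_2(\beta,M,M',|Q|)\ge 0$ valid for \emph{all} data in $\mathbb{X}_0^{M,M'}$.

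Next I would recover the mean of $\phi_t$, which is not seen by \eqref{disa1}. By the explicit formula \eqref{mde1}, $\langle\phi_t(t)\rangle=\langle\phi_1\rangle e^{-t/\beta}$, so $|\langle\phi_t(t)\rangle|\le M'$ for every $t\ge0$. Recalling that the chosen norm on $H^{-1}_p(Q)$ obeys $\|\phi_t\|_{-1}^2=\|\overline{\phi_t}\|_{-1}^2+\langle\phi_t\rangle^2$, we combine this with \eqref{disa1} to obtain, for every $t\ge0$,
\[
\|S(t)(\phi_0,\phi_1)\|_{\mathbb{X}_0}^2=\|\phi(t)\|_2^2+\|\phi_t(t)\|_{-1}^2\le \mathcal{Q}_1\big(\|\phi_0\|_2,\|\overline{\phi_1}\|_{-1}\big)\,e^{-\rho_1 t}+\rho_2+(M')^2 .
\]
Given now a bounded set $B_0\subset\mathbb{X}_0^{M,M'}$, pick $R_{B_0}>0$ with $\|\phi_0\|_2\le R_{B_0}$ and $\|\overline{\phi_1}\|_{-1}\le R_{B_0}$ for all $(\phi_0,\phi_1)\in B_0$, and choose $t_{B_0}>0$ large enough that $\mathcal{Q}_1(R_{B_0},R_{B_0})\,e^{-\rho_1 t_{B_0}}\le1$ (possible since $\rho_1>0$ and $\mathcal{Q}_1$ is monotone). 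Then for all $(\phi_0,\phi_1)\in B_0$ and all $t\ge t_{B_0}$ the first term above is at most $1$, whence
\[
\|S(t)(\phi_0,\phi_1)\|_{\mathbb{X}_0}\le\big(1+\rho_2+(M')^2\big)^{1/2}=:R_0 ,
\]
with $R_0=R_0(\beta,M,M',|Q|)$ independent of $B_0$. This is exactly the claimed uniform dissipativity.

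The only step that is not entirely routine is the first one: one has to inspect the proof of Lemma \ref{es} and confirm that the dependence of $\rho_1,\rho_2$ on the data is \emph{benign}, i.e. only through $|\langle\phi_1\rangle|$ and $|\beta\langle\phi_1\rangle+\langle\phi_0\rangle|$ and monotone in them, so that the two a priori bounds defining $\mathbb{X}_0^{M,M'}$ indeed render $\rho_1,\rho_2$ uniform; the remaining steps are bookkeeping.
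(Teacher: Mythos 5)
Your proposal is correct and follows exactly the route the paper intends: the paper offers no separate proof of Theorem \ref{abs}, presenting it as an immediate consequence of the dissipative estimate \eqref{disa1} of Lemma \ref{es}, whose constants $\rho_1,\rho_2$ are stated to depend on the data only through $\langle\phi_1\rangle$ and $M$ and hence are uniform on $\mathbb{X}_0^{M,M'}$. Your added bookkeeping (recovering $\langle\phi_t\rangle$ via \eqref{mde1} and the identity $\|\phi_t\|_{-1}^2=\|\overline{\phi_t}\|_{-1}^2+\langle\phi_t\rangle^2$, and choosing $t_{B_0}$ from the monotonicity of $\mathcal{Q}_1$) correctly fills in the details the paper leaves implicit.
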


For more regular initial data, for instance, $(\phi_0, \phi_1)\in \mathbb{X}_1$,
 we can deduce from Lemma \ref{es1} and a similar Galerkin approximation scheme the existence and uniqueness of weak solutions to problem \eqref{e1}--\eqref{e2}   as well as the existence of an absorbing set. More precisely, the following results hold:
 \begin{theorem}\label{exe1}
  For any initial data $(\phi_0, \phi_1)\in \mathbb{X}_1$, problem \eqref{e1}--\eqref{e2}   admits a unique global weak solution $(\phi, \phi_t)$.
 \end{theorem}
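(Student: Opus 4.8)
The plan is to re-run the Faedo--Galerkin scheme used in the proof of Theorem \ref{exe}, now with data $(\phi_0,\phi_1)\in\mathbb{X}_1$, and then to upgrade the regularity of the resulting solution by means of the higher-order dissipative estimate of Lemma \ref{es1}. Uniqueness is the easy half and I would settle it first: any weak solution is in particular an energy solution, so uniqueness within the weak class is an immediate consequence of Theorem \ref{exe}. It therefore only remains to prove that, when $(\phi_0,\phi_1)\in\mathbb{X}_1$, the (unique) energy solution given by Theorem \ref{exe} actually satisfies $(\phi,\phi_t)\in L^\infty(0,T;\mathbb{X}_1)$ together with $\phi_{tt}\in L^\infty(0,T;H^{-3}_p(Q))$, which is exactly what is needed for it to be a weak solution.

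For the a priori bounds I would work with the approximate problem $(P_n)$ of \eqref{e1aea}--\eqref{e2aea}, now with initial data $\Pi_n\phi_0$, $\Pi_n\phi_1$. Since $\Pi_n$ is the $L^2_p$-orthogonal projection onto a span of eigenfunctions of $-\Delta$, it is bounded on every $H^s_p(Q)$ and commutes with $A_0$ and its powers; hence $\|\Pi_n\phi_0\|_3\le\|\phi_0\|_3$, $\|\Pi_n\phi_1\|\le\|\phi_1\|$, and the test functions $\overline{\phi^n}$ and $\overline{\phi^n_t}$ entering the derivation of Lemma \ref{es1} are admissible in $W_n$. Carrying out the computations of Lemmas \ref{es} and \ref{es1} at the discrete level --- the uniform $L^\infty$ bound on $\phi^n$ needed in the proof of Lemma \ref{es1} being supplied by the discrete analogue of Lemma \ref{es} together with $H^2_p(Q)\hookrightarrow L^\infty(Q)$ for $n\le 3$ --- yields a bound for $\|\phi^n(t)\|_3^2+\|\overline{\phi^n_t}(t)\|^2$ that is uniform in $n$ and in $t\ge 0$. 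Combining this with the explicit identity $\langle\phi^n_t(t)\rangle=\langle\Pi_n\phi_1\rangle e^{-t/\beta}$ (cf. \eqref{mde1}) gives a uniform bound for $(\phi^n,\phi^n_t)$ in $L^\infty(0,T;\mathbb{X}_1)$, and a comparison in \eqref{e1aea} --- using that $\Delta^3\phi^n$ is bounded in $H^{-3}_p(Q)$, $\Delta^2\phi^n$ in $H^{-1}_p(Q)$, $\Pi_n f(\phi^n)$ in $L^2_p(Q)$, and $\phi^n_t$ in $L^2_p(Q)$ --- provides a uniform bound for $\phi^n_{tt}$ in $L^\infty(0,T;H^{-3}_p(Q))$.

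With these bounds in hand I would pass to the limit: up to a subsequence, $\phi^n$ converges weakly-$*$ to $\phi$ in $L^\infty(0,T;H^3_p(Q))$, $\phi^n_t$ to $\phi_t$ in $L^\infty(0,T;L^2_p(Q))$, and $\phi^n_{tt}$ to $\phi_{tt}$ in $L^\infty(0,T;H^{-3}_p(Q))$; by the Aubin--Lions lemma $\phi^n\to\phi$ strongly in $L^2(0,T;H^{3-\sigma}_p(Q))$ for small $\sigma>0$, hence (along a further subsequence) a.e.\ in $Q\times(0,T)$. Since $\phi^n$ is uniformly bounded in $L^\infty(Q\times(0,T))$ by the discrete version of Lemma \ref{es}, it follows that $f(\phi^n)=(\phi^n)^3+(1-\epsilon)\phi^n\to f(\phi)$ in $L^2(Q\times(0,T))$ by dominated convergence, which is enough to pass to the limit in \eqref{e1aea}--\eqref{e2aea}. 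The limit is an energy solution, hence coincides with the one from Theorem \ref{exe} by uniqueness (so the whole sequence converges), and by weak-$*$ lower semicontinuity of the norms it belongs to $L^\infty(0,T;\mathbb{X}_1)$ with $\phi_{tt}\in L^\infty(0,T;H^{-3}_p(Q))$, i.e.\ it is the unique weak solution. If one also wants the strong time continuity $(\phi,\phi_t)\in C([0,T];\mathbb{X}_1)$ and the corresponding energy identity, these follow from the argument of Lemma \ref{lin} applied one regularity level higher, noting that $\phi\in C([0,T];H^{3-\sigma}_p(Q))$ by Simon's embedding theorem forces the source term $G$ of \eqref{FF} to lie in $C([0,T];\dot L^2_p(Q))$.

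The only genuinely delicate point I anticipate is the rigorous justification of the higher-order a priori estimate at the discrete level: one must verify that the formal steps behind Lemma \ref{es1} --- in particular the appearance of $\|\nabla\Delta\phi\|^2$ and the control of the cubic contribution $\int_Q f''(\phi)|\nabla\phi|^2\overline{\phi_t}\,dx$ via Sobolev embeddings --- go through for the finite-dimensional solutions $\phi^n$ with constants independent of $n$. Everything else (uniqueness, compactness, passage to the limit, recovery of the time derivatives by comparison) is standard and already present, one regularity level down, in the proof of Theorem \ref{exe}.
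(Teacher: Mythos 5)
Your proposal is correct and follows essentially the same route the paper indicates: the paper itself only sketches this result, stating that it follows from Lemma \ref{es1} together with the same Galerkin scheme used for Theorem \ref{exe}, and your write-up supplies exactly those details (uniform $\mathbb{X}_1$-bounds at the discrete level, comparison for $\phi_{tt}$ in $H^{-3}_p(Q)$, passage to the limit, and uniqueness inherited from the energy-solution class). No gaps.
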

 \begin{theorem}\label{abs1}
The semiflow $\mathcal{S}$ is
uniformly dissipative on the phase space $$\mathbb{X}^{M,M'}_1=\{(u,v)\in \mathbb{X}_1: \ |\beta\langle v\rangle+\langle u\rangle|\leq M, \ |\langle v\rangle|\leq M'\}.$$ Namely, there exists a constant $R_1$ independent of the initial data such that, for all bounded set $B_1\subset \mathbb{X}_1^{M,M'}$,  there exists a $t_{B_1}>0$ such that for all $(\phi_0, \phi_1)\in \mathbb{X}_1^{M,M'}$,
 \be
\|S(t)(\phi_0, \phi_1)\|_{\mathbb{X}_1}\leq R_1,\quad \forall\,t \geq t_{B_1}.\non
\ee
\end{theorem}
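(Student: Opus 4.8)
The plan is to mimic the proof of Theorem \ref{abs}, replacing the first-order dissipative estimate of Lemma \ref{es} by the higher-order one of Lemma \ref{es1}. First I would note that $S(t)$ is well defined on $\mathbb{X}_1$ thanks to Theorem \ref{exe1}, and that $\mathbb{X}_1^{M,M'}$ is positively invariant: by the conservation law \eqref{conODE2} one has $\beta\langle\phi_t(t)\rangle+\langle\phi(t)\rangle=M$ for all $t\geq0$, while \eqref{mde1} gives $\langle\phi_t(t)\rangle=\langle\phi_1\rangle e^{-t/\beta}$, so $|\langle\phi_t(t)\rangle|\leq|\langle\phi_1\rangle|\leq M'$; hence $(\phi(t),\phi_t(t))\in\mathbb{X}_1^{M,M'}$ whenever the datum belongs to $\mathbb{X}_1^{M,M'}$.

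Next I would invoke Lemma \ref{es1}. Since estimate \eqref{disa1a} is stated for regular solutions, it must be validated for weak solutions by performing the computation within the Faedo--Galerkin scheme of Theorem \ref{exe1} and passing to the limit, using weak lower semicontinuity of the norms; this is entirely analogous to the argument already used for Theorem \ref{exe1}. Observe moreover that on $\mathbb{X}_1^{M,M'}$ the quantities $M$ and $|\langle\phi_1\rangle|\leq M'$ are bounded, and $\rho_3,\rho_4$ depend on them monotonically, so these two constants can be fixed uniformly over the whole phase space. Consequently, for every $(\phi_0,\phi_1)\in\mathbb{X}_1^{M,M'}$,
\[
\|\phi(t)\|_3^2+\|\overline{\phi_t}(t)\|^2\leq\mathcal{Q}(\|\phi_0\|_3,\|\overline{\phi_1}\|)\,e^{-\rho_3 t}+\rho_4,\qquad\forall\,t\geq0.
\]

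Finally I would reconstruct the full $\mathbb{X}_1$-norm. Since $|Q|=1$, $\|\phi_t(t)\|^2=\|\overline{\phi_t}(t)\|^2+|\langle\phi_t(t)\rangle|^2$ with $|\langle\phi_t(t)\rangle|=|\langle\phi_1\rangle|e^{-t/\beta}\leq M'$, and also $\|\overline{\phi_1}\|\leq\|\phi_1\|$; therefore
\[
\|S(t)(\phi_0,\phi_1)\|_{\mathbb{X}_1}^2\leq\mathcal{Q}(\|\phi_0\|_3,\|\phi_1\|)\,e^{-\rho_3 t}+\rho_4+M'^2.
\]
Given a bounded set $B_1\subset\mathbb{X}_1^{M,M'}$ contained in the ball of radius $R$, monotonicity of $\mathcal{Q}$ yields $\mathcal{Q}(\|\phi_0\|_3,\|\phi_1\|)\leq\mathcal{Q}(R,R)$; choosing $t_{B_1}:=\max\{0,\rho_3^{-1}\ln\mathcal{Q}(R,R)\}$ makes the exponential term $\leq1$ for $t\geq t_{B_1}$, so that $\|S(t)(\phi_0,\phi_1)\|_{\mathbb{X}_1}\leq R_1:=(1+\rho_4+M'^2)^{1/2}$ for all $t\geq t_{B_1}$, with $R_1$ independent of the initial data. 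As anticipated, the only genuinely delicate point is the rigorous justification of \eqref{disa1a} for the class of weak solutions; the rest is bookkeeping combined with the explicit formulas \eqref{mde1}--\eqref{mde2} for the means.
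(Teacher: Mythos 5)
Your proposal is correct and follows exactly the route the paper intends: the authors state Theorem \ref{abs1} without a separate proof, remarking only that it follows from the higher-order dissipative estimate of Lemma \ref{es1} (justified through the same Faedo--Galerkin scheme used for Theorem \ref{exe1}) together with the explicit formulas \eqref{mde1}--\eqref{mde2} controlling the means on $\mathbb{X}_1^{M,M'}$. Your filling-in of the bookkeeping (uniformity of $\rho_3,\rho_4$ over the phase space, reconstruction of the full $\mathbb{X}_1$-norm from the zero-mean parts, and the choice of $t_{B_1}$ and $R_1$) is exactly what is needed and contains no gaps.
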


\section{Global and exponential attractors}
\setcounter{equation}{0}
\noindent
In this section, we study the (global) longtime behavior of the semiflow $\mathcal{S}$ associate with the MPFC equation \eqref{e1}--\eqref{e2}.

\subsection{Global attractor}
The existence of the global attractor of problem \eqref{e1}--\eqref{e2} in the phase space $\mathbb{X}_0^{M,M'}$ is given by

\bt\label{gloatt}
For each $\beta>0$ and $M, M'>0$, the semiflow $\mathcal{S}$ defined on the phase space $\mathbb{X}_0^{M,M'}$ has a connected global attractor $\mathcal{A}_0$, which is bounded in $\mathbb{X}_1$.
\et

The basic step in the proof of Theorem \ref{gloatt} is to show certain (pre)compactness property of trajectories in the phase space $\mathbb{X}_0$. We have to overcome the difficulties from the hyperbolic nature of the system.
To this end, we establish a proper decomposition of the semigroup $S(t)$ into an uniformly asymptotically stable part and a compact part. We note that this decomposition also entails that the attractor $\mathcal{A}_0$ is bounded in the more regular space $\mathbb{X}_1$. This fact will be further exploited for constructing an exponential attractor.

Let $(\phi, \phi_t)$ be the unique energy solution to problem \eqref{e1}--\eqref{e2}   given in Theorem \ref{exe}. We split this solution
into two parts, namely,
 $$  (\phi, \phi_t)(t)= (\phi^d, \phi^d_t)(t)+ (\phi^c, \phi^c_t)(t),$$
where
 \bea
&& A_0^{-1}(\beta \phi^d_{tt}+\phi^d_t)+ \Delta^2\phi^d+ 2\Delta \phi^d+f_k(\phi^d)-\langle f_k(\phi^d)\rangle=0,\label{e1d}
\\
&&\phi^d|_{t=0}=\overline{{\phi}_0}(x),\quad \phi^d_t|_{t=0}=\overline{\phi_1}(x), \label{e2d}
\eea
 and
 \bea
&& A_0^{-1}(\beta \phi^c_{tt}+\phi^c_t)+\Delta^2 \phi^c+2\Delta \phi^c+f_k(\phi)-f_k(\phi-\phi^c)\non\\
&&\quad -\langle f_k(\phi)\rangle+\langle f_k(\phi-\phi^c)\rangle
\non\\
&=&k\phi-k\langle\phi\rangle,\label{e1c}
\\
&&\phi^c|_{t=0}=\langle\phi_0(x)\rangle,\quad \phi^c_t|_{t=0}=\langle\phi_1(x)\rangle.\label{e2c}
\eea
Here, we set
$$f_k(\phi):=f(\phi)+k\phi$$ with $k>0$ being a sufficiently large constant to be chosen later. In particular,
we require that $f_k(s)$ is monotone and nondecreasing in $\mathbb{R}$.

\bl\label{decay}
Let the assumptions of Theorem \ref{exe} hold. Then there exists a sufficiently large $k$ such that
\be
\|(\phi^d(t), \phi_t^d(t))\|_{\mathbb{X}_0}\leq C(\|(\overline{\phi_0}(x), \overline{\phi_1}(x))\|_{\mathbb{X}_0})e^{-\kappa t}, \quad \forall\, t\geq 0,\label{decayx0}
\ee
where $\kappa>0$ is a small constant.
\el
\begin{proof}
For any positive constant $k$, the existence and uniqueness of a global energy solution $(\phi^d(t), \phi_t^d(t))$ to problem \eqref{e1d}--\eqref{e2d} easily follows through the same argument used to prove Theorem \ref{exe}. Moreover, due to the zero-mean assumption on the initial data \eqref{e2d}, we conclude that $$\langle\phi^d(t)\rangle=\langle\phi_t^d(t)\rangle=0, \quad \forall\, t\geq 0,$$ which also yields $\phi^d(t)=\overline{\phi^d}(t)$ and $ \phi^d_t(t)=\overline{\phi^d_t}(t)$.

Testing \eqref{e1d} by $\phi^d_t$ and $\phi^d$, respectively, we have
\be
\frac{d}{dt}\left(\frac\beta2\|\phi^d_t\|_{-1}^2+ \frac12\|\Delta \phi^d\|^2-\|\nabla \phi^d\|^2+ \int_{Q} F_k(\phi^d) dx\right)+\|\phi^d_t\|_{-1}^2=0,\label{decayx01}
\ee
and
\bea
&& \frac{d}{dt}\left(\beta (\phi^d_t, \phi^d)_{-1}+\frac12\|\phi^d\|_{-1}^2\right)-\beta \|\phi^d_t\|^2_{-1}\non\\
&&\qquad +\|\Delta \phi^d\|^2-2\|\nabla \phi^d\|^2+\int_{Q} f_k(\phi^d)  \phi^d dx = 0,\label{decayx02}
\eea
where $$F_k(\phi^d)=\frac{1-\epsilon+k}{2}(\phi^d)^2+\frac14 (\phi^d)^4.$$
Multiplying \eqref{decayx02} by $\eta>0$ and adding it to \eqref{decayx01}, we get
\be
\frac{d}{dt}\mathcal{Y}^d_1(t)+\mathcal{D}^d_1(t)\leq 0,\label{decayx03}
\ee
where
\bea
\mathcal{Y}^d_1(t)&=& \frac{\beta}{2}\|\phi^d_t\|_{-1}^2+ \frac12\|\Delta \phi^d\|^2-\|\nabla \phi^d\|^2+ \int_{Q} F_k(\phi^d) dx\non\\
&&\quad +  \eta \beta (\phi^d_t, \phi^d)_{-1}+\frac{\eta}{2}\| \phi^d\|_{-1}^2,
\nonumber\\
\mathcal{D}^d_1(t)&=& (1-\eta\beta)\|\phi^d_t\|_{-1}^2+\eta \|\Delta \phi^d\|^2-2\eta \|\nabla \phi^d\|^2+\eta \int_{Q} f_k(\phi^d) \phi^d dx.\non
\eea
Recalling \eqref{intpo}, we take $k>0$ sufficiently large and $\eta>0$ sufficiently small such that
\be
C(\|\phi^d_t\|_{-1}, \|\phi^d\|_{H^2})\geq \mathcal{Y}^d_1(t)\geq \frac{\beta}{4}\|\phi^d_t\|_{-1}^2+\frac14\|\Delta \phi^d\|^2+ \frac{k}{2}\|\phi^d\|^2,\label{Yd1}
\ee
\be
 \mathcal{D}^d_1(t)\geq \frac12 \|\phi^d_t\|_{-1}^2+\frac\eta2 \|\Delta \phi^d\|^2+\frac\eta2 \int_{Q} f_k(\phi^d) \phi^d dx,\non
\ee
and
\be
\kappa\mathcal{Y}^d_1(t)\leq \mathcal{D}^d_1(t),\non
\ee
where
$\kappa$ is a (small) constant independent of  $\phi^d$ and time $t$.
As a result, we have
\be
\frac{d}{dt}\mathcal{Y}^d_1(t)+\kappa\mathcal{Y}^d_1(t)\leq 0,\label{decayx04}
\ee
which implies
\be
\mathcal{Y}^d_1(t)\leq \mathcal{Y}^d_1(0)e^{-\kappa t}.
\ee
We infer from \eqref{Yd1} that \eqref{decayx0} holds. The proof is complete.
\end{proof}

\bl\label{com}
Let the assumptions of Theorem \ref{exe} hold. Then we have
\be
\|(\phi^c(t), \phi_t^c(t))\|_{\mathbb{X}_1}\leq C(\|(\phi_0, \phi_1)\|_{\mathbb{X}_0}), \quad \forall\, t\geq 0.\label{comx0}
\ee
\el
\begin{proof}
Let the constant $k$ be the one we choose in Lemma \ref{decay}. For the initial data $(\phi_0, \phi_1)$ belonging to a bounded set in $\mathbb{X}_0$, it follows from the uniform estimates \eqref{disa1} and \eqref{decayx0} that $(\phi^c, \phi^c_t)$ also belongs to a bounded set in $\mathbb{X}_0$, i.e.,
 \be
 \|(\phi^c(t), \phi_t^c(t))\|_{\mathbb{X}_0}\leq C(\|(\phi_0, \phi_1)\|_{\mathbb{X}_0}), \quad \forall\, t\geq 0.\label{comx1}
 \ee
 To prove the fact that $(\phi^c(t), \phi_t^c(t))$ is indeed more regular, we perform some higher-order calculations that can be justified rigorously by working within a proper Galerkin scheme as before.

Testing \eqref{e1c} by $A_0 \overline{\phi^c_t}$ and $A_0 \overline{\phi^c}$, respectively, we get
 \bea
 && \frac{d}{dt}\left(\frac{\beta}{2}\|\overline{\phi^c_t}\|^2+\frac12
 \|\nabla \Delta \phi^c\|^2-\|\Delta \phi^c\|^2+\frac{k}{2}\|\nabla \phi^c\|^2\right)+\|\overline{\phi^c_t}\|^2\non\\
 &=& \int_Q\Delta(f(\phi)-f(\phi-\phi^c))\overline{\phi^c_t} dx-k\int_Q \Delta \phi \overline{\phi^c_t} dx,\label{comx2}
 \eea
 \bea
 &&\frac{d}{dt}\left(\beta(\overline{\phi^c_t}, \overline{\phi^c})+\frac12\|\overline{\phi^c}\|^2\right)-\beta\|\overline{\phi^c_t}\|^2+\|\nabla \Delta \phi^c\|^2-2\|\Delta \phi^c\|^2+k\|\nabla \phi^c\|^2\non\\
 &=&\int_Q(f(\phi)-f(\phi-\phi^c))\Delta \phi^c dx-k\int_Q \phi \Delta \phi^c dx.\label{comx3}
 \eea
 Multiplying \eqref{comx3} by $\eta_1>0$ and adding it to \eqref{comx2}, we deduce
 \be
\frac{d}{dt}\mathcal{Y}^c_1(t)+\mathcal{D}^c_1(t)\leq \mathcal{R}^c_1(t),\label{comx4}
\ee
where
\bea
\mathcal{Y}^c_1(t)&=& \frac{\beta}{2}\|\overline{\phi^c_t}\|^2+\frac12
 \|\nabla \Delta \phi^c\|^2-\|\Delta \phi^c\|^2+\frac{k}{2}\|\nabla \phi^c\|^2+\eta_1\beta(\overline{\phi^c_t}, \overline{\phi^c})+\frac{\eta_1}{2}\|\overline{\phi^c}\|^2,\non\\
 \mathcal{D}^c_1(t)&=&(1-\eta_1\beta)\|\overline{\phi^c_t}\|^2+\eta_1\|\nabla \Delta \phi^c\|^2-2\eta_1\|\Delta \phi^c\|^2+\eta_1k\|\nabla \phi^c\|^2,\non\\
 \mathcal{R}^c_1(t)&=&\int_Q\Delta(f(\phi)-f(\phi-\phi^c))\overline{\phi^c_t} dx+\eta_1\int_Q(f(\phi)-f(\phi-\phi^c))\Delta \phi^c dx\non\\
 && \quad -k\int_Q \Delta \phi \overline{\phi^c_t} dx-\eta_1k\int_Q \phi \Delta \phi^c dx.\non
\eea
First, for sufficiently large $k$ and small $\eta_1$, we can easily see that
 \be
 \mathcal{D}^c_1(t)\geq \frac12\|\overline{\phi_t^c}\|^2+\kappa'\mathcal{Y}^c_1(t)\geq C(\|\overline{\phi^c_t}\|^2+
 \|\nabla \phi^c\|_{H^2}^2),\label{comx7}
 \ee
 where $\kappa'>0$ is a small constant.

  Due to the Sobolev embedding $H^2(Q) \hookrightarrow L^\infty(Q)$ ($n\leq 3$), the remainder term $\mathcal{R}^c_1$ can be estimated by using the uniform estimates of $(\phi, \phi_t)$ and $(\phi^c, \phi^c_t)$ in the $\mathbb{X}_0$-norm (see \eqref{disa1} and \eqref{comx1}) such that
 \bea
 \mathcal{R}_1^c(t)&\leq&
  \|\overline{\phi^c_t} \| \|\Delta(f(\phi)-f(\phi-\phi^c))\|+\eta_1\|f(\phi)-f(\phi-\phi^c)\|\|\Delta \phi^c\|\non\\
 && + k\|\Delta \phi\|\|\overline{\phi^c_t} \|+\eta_1 k\|\phi\|\|\Delta \phi^c\|\non\\
 &\leq& \frac12\|\overline{\phi_t^c}\|^2+ \|\Delta(f(\phi)-f(\phi-\phi^c))\|^2+ k^2\|\Delta \phi\|^2\non\\
 &&+\eta_1\|f(\phi)-f(\phi-\phi^c)\|\|\Delta \phi^c\| +\eta_1 k\|\phi\|\|\Delta \phi^c\|\non\\
 &\leq& \frac12\|\overline{\phi_t^c}\|^2+ C(\|\phi\|_{2}, \|\phi^c\|_{2})\non\\
 &\leq&  \frac12\|\overline{\phi_t^c}\|^2+ C(\|(\phi_0, \phi_1)\|_{\mathbb{X}_0}).\non
 \eea
 The above estimate combined \eqref{comx4} and \eqref{comx7} yields that
 \be
\frac{d}{dt}\mathcal{Y}^c_1(t)+\kappa'\mathcal{Y}^c_1(t)\leq C(\|(\phi_0, \phi_1)\|_{\mathbb{X}_0}).\label{comx5}
\ee
 As a result, we find
 \be
 \mathcal{Y}^c_1(t)\leq \mathcal{Y}^c_1(0)e^{-\kappa' t}+ \frac{C(\|(\phi_0, \phi_1)\|_{\mathbb{X}_0}) }{\kappa'},\quad \forall t\geq 0.\label{comx6}
 \ee
 On the other hand, the choice of initial data \eqref{e2c} indicates that $\mathcal{Y}_1^c(0)=0$.
 Then, from \eqref{comx7} and \eqref{comx6}  we conclude that \eqref{comx0} holds. The proof is complete.
\end{proof}

\textbf{Proof of Theorem \ref{gloatt}}. We have shown that a given trajectory originating from $\mathbb{X}_0$ is a sum of an exponentially
decaying part and a term that belongs to a closed bounded subset of the more regular space $\mathbb{X}_1$. Therefore,
the trajectory is precompact in $\mathbb{X}_0$. On the other hand, on account of Theorem \ref{abs}, the semigroup $S(t)$ has a bounded attracting
set in $\mathbb{X}_0^{M,M'}$ (for any $\beta>0$). Hence, the conclusion of Theorem \ref{gloatt} follows from a well-known abstract result for infinite dimensional dynamical systems (see, e.g., \cite[Theorem 1.1]{Te}). $\square$

\subsection{Exponential attractors}
In what follows, we proceed to prove the existence of an exponential attractor for the
semiflow $\mathcal{S}$ consisting of energy solutions to problem \eqref{e1}--\eqref{e2}.
For the importance of this notion the reader is referred to \cite{MZ} and references therein.
More precisely, we will prove

\bt \label{exat} For each $\beta>0$ and $M, M'>0$, the semiflow $\mathcal{S}$ defined on the phase space $\mathbb{X}_0^{M,M'}$ admits an
exponential attractor $\mathcal{M}_0$, which is a positively invariant, compact subset of $\mathbb{X}_0$
with finite fractal dimension with respect to the $\mathbb{X}_0$-metric and bounded in $\mathbb{X}_1$, such that,
for any bounded $\mathcal{B}\subset \mathbb{X}_0^{M,M'}$ there exist $K_\mathcal{B} > 0$ and $\gamma_\mathcal{B}> 0$ such that
\be
{\rm dist}_{\mathbb{X}_0}(S(t)\mathcal{B}, \mathcal{M}_0)\leq K_{\mathcal{B}}e^{-\gamma_\mathcal{B} t},\label{exp}
\ee
where
${\rm dist}_{\mathbb{X}_0}$ denotes the Hausdorff semidistance of sets with respect to the $\mathbb{X}_0$-metric.
\et
We note that by its definition an exponential attractor $\mathcal{M}_0$ contains the global attractor $\mathcal{A}_0$ obtained in Theorem \ref{gloatt}.
As a consequence, we have
\begin{corollary}\label{GA1}
 The global attractor $\mathcal{A}_0$ has finite fractal dimension.
\end{corollary}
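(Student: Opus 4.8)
The plan is to obtain the corollary as an immediate consequence of Theorem~\ref{exat} together with the elementary monotonicity of the fractal dimension with respect to set inclusion. Recall that, for a compact subset $K$ of the metric space $(\mathbb{X}_0,\|\cdot\|_{\mathbb{X}_0})$, its fractal (box-counting) dimension is
\[
\dim_f(K)=\limsup_{\varepsilon\to 0^+}\frac{\log N_\varepsilon(K)}{\log(1/\varepsilon)},
\]
where $N_\varepsilon(K)$ denotes the minimal number of balls of radius $\varepsilon$ in the $\mathbb{X}_0$-metric needed to cover $K$. If $K_1\subseteq K_2$ are compact, then every finite cover of $K_2$ by $\varepsilon$-balls is also a cover of $K_1$, so $N_\varepsilon(K_1)\le N_\varepsilon(K_2)$ for all $\varepsilon>0$, whence $\dim_f(K_1)\le\dim_f(K_2)$. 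Consequently it suffices to exhibit a set of finite fractal dimension in $\mathbb{X}_0$ that contains $\mathcal{A}_0$, and Theorem~\ref{exat} provides precisely such a set, namely the exponential attractor $\mathcal{M}_0$.

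First I would verify the inclusion $\mathcal{A}_0\subseteq\mathcal{M}_0$, which is the only point requiring justification. Since $\mathcal{A}_0$ is the global attractor furnished by Theorem~\ref{gloatt}, it is a bounded subset of $\mathbb{X}_0^{M,M'}$ and, crucially, it is \emph{invariant} under the semiflow, i.e. $S(t)\mathcal{A}_0=\mathcal{A}_0$ for every $t\ge 0$. Applying the exponential attraction property \eqref{exp} to the bounded set $\mathcal{B}=\mathcal{A}_0$ and using this invariance gives
\[
{\rm dist}_{\mathbb{X}_0}(\mathcal{A}_0,\mathcal{M}_0)={\rm dist}_{\mathbb{X}_0}(S(t)\mathcal{A}_0,\mathcal{M}_0)\le K_{\mathcal{A}_0}e^{-\gamma_{\mathcal{A}_0}t}\longrightarrow 0\quad\text{as }t\to+\infty.
\]
The left-hand side does not depend on $t$, so ${\rm dist}_{\mathbb{X}_0}(\mathcal{A}_0,\mathcal{M}_0)=0$; as $\mathcal{M}_0$ is compact, hence closed, in $\mathbb{X}_0$, this forces $\mathcal{A}_0\subseteq\mathcal{M}_0$.

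Combining the two steps yields
\[
\dim_f(\mathcal{A}_0)\le\dim_f(\mathcal{M}_0)<+\infty,
\]
the finiteness on the right being part of the assertion of Theorem~\ref{exat}. There is no genuine obstacle here: the entire substance of the corollary is carried by Theorem~\ref{exat}, and the only subtlety worth recording is that the inclusion $\mathcal{A}_0\subseteq\mathcal{M}_0$ rests on the full invariance of $\mathcal{A}_0$ (rather than merely its attracting property) together with the closedness of $\mathcal{M}_0$.
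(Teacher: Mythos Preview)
Your argument is correct and follows exactly the route taken in the paper: the corollary is stated immediately after the remark that an exponential attractor $\mathcal{M}_0$ contains the global attractor $\mathcal{A}_0$, so the finite fractal dimension of $\mathcal{A}_0$ is inherited from that of $\mathcal{M}_0$. You simply spell out the inclusion $\mathcal{A}_0\subseteq\mathcal{M}_0$ (via invariance of $\mathcal{A}_0$ and closedness of $\mathcal{M}_0$) and the monotonicity of $\dim_f$ under inclusion, both of which the paper treats as evident.
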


Different approaches can be employed to prove the existence of an exponential
attractor $\mathcal{M}_0$ to problem \eqref{e1}--\eqref{e2}. Here, we shall use the simple constructive method introduced in
\cite[Proposition 1]{EMZ}. The procedure consists of three basic steps.

\textit{Step 1. Confining the dynamics on a regular positively invariant set in $\mathbb{X}_1$}.
Our previous results yield the following preliminary observations:

\bp \label{abs2} Let the assumptions of Theorem \ref{exat} are satisfied. Then:

(i) there exists a bounded set $\mathcal{B}_1$ in $\mathbb{X}_1$ that exponentially attracts any bounded set of $\mathbb{X}_0^{M,M'}$ with respect to the $\mathbb{X}_0$-metric;

(ii) there exists a bounded positively invariant set $\mathcal{V}_1$ in
$\mathbb{X}_1$, which absorbs the set $\mathcal{B}_1$ and, consequently, exponentially attracts
any bounded set of $\mathbb{X}_0^{M,M'}$ with respect to the $\mathbb{X}_0$-metric.
\ep
\begin{proof} The conclusion (i) is a simple consequence of Lemmas \ref{decay} and \ref{com}. As far as (ii) is concerned, we first recall Lemma \ref{es1}, which gives a dissipative estimate on $\mathcal{B}_1$ (cf. also Theorem \ref{abs1}). This entails the existence of a positively invariant and
$\mathbb{X}_1$-bounded set $\mathcal{V}_1$, which eventually absorbs any $\mathbb{X}_1$-bounded set of data. In particular, $\mathcal{V}_1$ absorbs $\mathcal{B}_1$, and by the definition of $\mathcal{B}_1$ in (i), we arrive at (ii). The proof is complete.
\end{proof}
\textit{Step 2. Existence of a smoother exponentially attracting set in $\mathbb{X}_1$}.
Let us take initial data lying in the (regular and positively invariant) set $\mathcal{V}_1$
constructed in Proposition \ref{abs2}. Notice also that it is not restrictive to assume $\mathcal{V}_1$ to be weakly closed in $\mathbb{X}_1$.
We now show the asymptotic smoothing property and H\"{o}lder continuity of the semigroup $S(t)$ on $\mathcal{V}_1$.

\begin{lemma}
\label{lmassm} Denote $z=(\phi, \phi_t)$. There exists $t^*\geq 0$ such that, setting $\mathrm{S}=S(t^*)$, we have
$$ \mathrm{S}z_{01}-\mathrm{S}z_{02}=D(z_{01},z_{02})+K(z_{01},z_{02}),$$
for every $z_{01}$, $z_{02}\in \mathcal{V}_1$, where $D$ and $K$ satisfy
\be
\|D(z_{01},z_{02})\|_{\mathbb{X}_0}\leq\lambda\|z_{01}-z_{02}\|_{\mathbb{X}_0},\quad
\|K(z_{01},z_{02})\|_{\mathbb{X}_1}\leq \Lambda\|z_{01}-z_{02}\|_{\mathbb{X}_0},\label{asymsmoo}
\ee
for some $\lambda\in(0,\frac{1}{2})$ and $\Lambda\geq 0$.
\end{lemma}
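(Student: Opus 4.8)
The plan is to decompose the semigroup $\mathrm{S}=S(t^*)$ acting on the difference of two trajectories into a contractive part $D$ (measured in the $\mathbb{X}_0$-norm) and a smoothing part $K$ (bounded in the stronger $\mathbb{X}_1$-norm by the $\mathbb{X}_0$-norm of the initial data). The natural splitting is dictated by the structure already used in Section 5: for $z_{0j}=(\phi_{0j},\phi_{1j})\in\mathcal{V}_1$ with corresponding energy solutions $(\phi_j,\phi_{jt})$, write the difference $\tilde\phi=\phi_1-\phi_2$ as $\tilde\phi=\tilde\phi^d+\tilde\phi^c$, where $\tilde\phi^d$ solves the ``monotone'' linear-type problem with the shifted nonlinearity $f_k$ evaluated along the two solutions (the analogue of \eqref{e1d}) with the zero initial data for the mean and the full difference of initial data for the oscillating part, and $\tilde\phi^c$ collects the remaining (lower-order, hence regularizing) terms, as in \eqref{e1c}. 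Then set $D(z_{01},z_{02})=(\tilde\phi^d(t^*),\tilde\phi^d_t(t^*))$ and $K(z_{01},z_{02})=(\tilde\phi^c(t^*),\tilde\phi^c_t(t^*))$.

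For the estimate on $D$, I would test the difference equation by $A_0^{-1}\overline{\tilde\phi^d_t}$ and $A_0^{-1}\overline{\tilde\phi^d}$, exactly as in Lemma \ref{decay}, using the monotonicity of $f_k$ and the fact that $\phi_1,\phi_2$ are uniformly bounded in $L^\infty$ (which follows from $\mathcal{V}_1\subset\mathbb{X}_1$ and the Sobolev embedding $H^2_p(Q)\hookrightarrow L^\infty(Q)$, $n\le 3$); the extra terms produced by linearizing $f_k$ along a segment between the two solutions are controlled by $c_Q\|f_k'\|_{L^\infty}\|\tilde\phi^d\|^2$ plus a Gronwall-type feedback from $\tilde\phi^c$, but since here we only need a contraction factor we can absorb the cross term and obtain exponential decay $\|D(z_{01},z_{02})\|_{\mathbb{X}_0}\le C e^{-\kappa t^*}\|z_{01}-z_{02}\|_{\mathbb{X}_0}$ for a $\kappa>0$ independent of the data. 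Choosing $t^*$ large enough makes the prefactor smaller than $\tfrac12$, giving the required $\lambda\in(0,\tfrac12)$.

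For the estimate on $K$, I would test the $\tilde\phi^c$-equation by $A_0\overline{\tilde\phi^c_t}$ and $A_0\overline{\tilde\phi^c}$, mimicking Lemma \ref{com}, to produce a differential inequality of the form $\tfrac{d}{dt}\mathcal{Y}+\kappa'\mathcal{Y}\le \mathcal{R}$ where $\mathcal{Y}$ is equivalent to $\|(\tilde\phi^c,\tilde\phi^c_t)\|_{\mathbb{X}_1}^2$ and the right-hand side $\mathcal{R}$ involves only $H^2$-norms of $\phi_1,\phi_2,\tilde\phi^c$ and terms like $\|\Delta(f(\phi_1)-f(\phi_2)-f(\phi_1-\tilde\phi^c)+\cdots)\|^2$ and $k^2\|\Delta\tilde\phi\|^2$. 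Using the uniform $\mathbb{X}_1$-bounds on $\phi_1,\phi_2$ and the Lipschitz estimate \eqref{LipX0} (which controls $\sup_{[0,t^*]}\|\tilde\phi(\tau)\|_{H^2}$, hence $\|\Delta\tilde\phi(\tau)\|^2$, by $C\|z_{01}-z_{02}\|_{\mathbb{X}_0}^2$), every term in $\mathcal{R}$ is bounded by $C\|z_{01}-z_{02}\|_{\mathbb{X}_0}^2$, with $C$ depending only on $\mathcal{V}_1$, $t^*$, $\beta$, $f$, $Q$. Since $\tilde\phi^c$ has zero initial data, Gronwall gives $\|K(z_{01},z_{02})\|_{\mathbb{X}_1}^2=\|(\tilde\phi^c(t^*),\tilde\phi^c_t(t^*))\|_{\mathbb{X}_1}^2\le \Lambda^2\|z_{01}-z_{02}\|_{\mathbb{X}_0}^2$.

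The main obstacle is the spatial regularity gap between $\phi$ and $\phi_t$: in the $K$-estimate one differentiates the equation twice more in space (testing against $A_0$ rather than $A_0^{-1}$), so controlling the nonlinear remainder $\mathcal{R}$ requires $H^2$-control of the full difference $\tilde\phi$, which is precisely what \eqref{LipX0} provides but only with a constant that grows like $e^{L_2 t^*}$; one must therefore fix $t^*$ first to get $\lambda<\tfrac12$ from the $D$-estimate, and only afterwards read off $\Lambda$ (allowed to be as large as needed) from the $K$-estimate, so the order of the two arguments matters. A secondary point requiring care is checking that the decomposition is consistent with the mean-value dynamics \eqref{mde1}--\eqref{mde2}: the mean of $\tilde\phi^d$ is identically zero by construction, while the mean of $\tilde\phi^c$ carries the (exponentially relaxing) mean dynamics of $\tilde\phi$, so the zero-mean projections $\overline{\tilde\phi^d}$, $\overline{\tilde\phi^c}$ used in the energy estimates are handled exactly as in Lemmas \ref{decay} and \ref{com}, with the nonzero-mean contributions estimated separately and absorbed into the constants $\lambda$ and $\Lambda$.
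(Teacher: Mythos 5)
Your overall architecture---split the difference of two trajectories into an exponentially decaying part $D$ and a smoothing part $K$, fix $t^*$ first so that the decay yields $\lambda<\tfrac12$, and only afterwards read off $\Lambda$---is exactly the paper's, and your treatment of the $K$-part (testing by $A_0\overline{\psi^c_t}$ and $A_0\overline{\psi^c}$, using the $\mathbb{X}_1$-bound available on $\mathcal{V}_1$, the Lipschitz estimate \eqref{LipX0} to control $\sup_{[0,t^*]}\|\tilde\phi(\tau)\|_2$, and the zero initial data for the oscillating component) matches \eqref{e1cd}--\eqref{KK}.

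The gap is in the $D$-part. You keep ``the shifted nonlinearity $f_k$ evaluated along the two solutions'' inside the $\tilde\phi^d$-equation. If that means the forcing $f_k(\phi_1)-f_k(\phi_2)$, then the $D$-equation is driven by the \emph{full} difference $\tilde\phi$, whose only available bound is \eqref{LipX0}, i.e. $\|\tilde\phi(\tau)\|_2\le Ce^{C\tau}\|z_{01}-z_{02}\|_{\mathbb{X}_0}$; the resulting Duhamel term $\int_0^{t}e^{-\kappa(t-\tau)}e^{C\tau}d\tau$ grows exponentially and no contraction factor $\lambda<\tfrac12$ emerges for large $t^*$. If instead you mean the linearized coefficient $\ell(t)=\int_0^1 f_k'(\tau\phi_1+(1-\tau)\phi_2)\,d\tau$ multiplying $\tilde\phi^d$, then monotonicity gives the right sign in the $\overline{\tilde\phi^d}$-test, but in the $\overline{\tilde\phi^d_t}$-test the term $\int_Q\ell\,\tilde\phi^d\,\overline{\tilde\phi^d_t}\,dx$ is not a perfect time derivative: it generates $-\tfrac12\int_Q\ell_t|\tilde\phi^d|^2dx$ with $\ell_t$ involving $\phi_{it}$, and \emph{that} is the term to be controlled, not $c_Q\|f_k'\|_{L^\infty}\|\tilde\phi^d\|^2$ as you wrote; moreover the ``Gronwall-type feedback from $\tilde\phi^c$'' you invoke would, if genuinely present, ruin the $t^*$-uniform decay rate. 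The paper sidesteps all of this by making the $D$-equation for the difference \emph{purely linear with constant coefficients}, $A_0^{-1}(\beta\psi^d_{tt}+\psi^d_t)+\Delta^2\psi^d+2\Delta\psi^d+k\psi^d=0$ (see \eqref{e1dd}), so the decay \eqref{decayx0d} is immediate and independent of the trajectories, while the entire nonlinear difference $f(\phi_1)-f(\phi_2)$, together with the compensating term $k(\psi-\psi^c)=k\psi^d$, is pushed into the $K$-equation, where it is measured in a weaker norm and controlled by \eqref{LipX0}. Adopt that splitting (note it differs from the one in Lemma \ref{decay}, which is for a single trajectory and can afford the nonlinearity in the decaying part); with it, the rest of your argument goes through as written.
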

\begin{proof}
For any $z_{01}$, $z_{02}\in \mathcal{V}_1$, we simply denote $(\phi_i, \phi_{it})(t)=S(t)z_{0i}$ ($i=1,2$) the weak solutions to the MPFC equation \eqref{e1} with corresponding initial data and
$$z(t)=S(t)z_{01}-S(t)z_{02}=(\psi, \psi_t)(t),\quad  z_0=z_{01}-z_{02}=(\psi_0, \psi_1).$$
As before, we write the difference of solution $(\phi, \phi_t)$ as follows
$$  (\psi, \psi_t)(t)= (\psi^d, \psi^d_t)(t)+ (\psi^c, \psi^c_t)(t),$$
where
 \bea
&& A_0^{-1}(\beta \psi^d_{tt}+\psi^d_t)+ \Delta^2\psi^d+ 2\Delta \psi^d+k \psi^d=0,\label{e1dd}
\\
&&\phi^d|_{t=0}=\overline{{\psi}_0}(x),\quad \phi^d_t|_{t=0}=\overline{\psi_1}(x), \label{e2dd}
\eea
 and
 \bea
&& A_0^{-1}(\beta \psi^c_{tt}+\psi^c_t)+\Delta^2 \psi^c+2\Delta \psi^c +f(\phi_1)-\langle f(\phi_1)\rangle\non\\
&&\quad -f(\phi_2)+\langle f(\phi_2)\rangle \non\\
&=&k(\psi-\psi^c),\label{e1cd}
\\
&&\psi^c|_{t=0}=\langle\psi_0(x)\rangle,\quad \psi^c_t|_{t=0}=\langle\psi_1(x)\rangle.\label{e2cd}
\eea
Here, $k>0$ is again a sufficiently large constant (not necessarily the same used in the previous decomposition). For large $k$, it is easy to show the decay of $\psi^d$, which can be viewed as the solution to the linear problem \eqref{e1dd}--\eqref{e2dd}:
 \be
\|\psi^d(t)\|_2^2 +\beta\|\psi_t^d(t))\|_{-1}^2\leq C\|(\overline{\psi_0}(x), \overline{\psi_1}(x))\|_{\mathbb{X}_0}^2e^{-\kappa t}, \quad \forall\, t\geq 0.\label{decayx0d}
 \ee
 Next, testing \eqref{e1cd} by $A_0 \overline{\psi^c_t}$ and $A_0 \overline{\psi^c}$, respectively, we get
 \bea
 && \frac{d}{dt}\left(\frac{\beta}{2}\|\overline{\psi^c_t}\|^2+\frac12
 \|\nabla \Delta \psi^c\|^2-\|\Delta \psi^c\|^2+\frac{k}{2}\|\nabla \psi^c\|^2\right)+\|\overline{\psi^c_t}\|^2\non\\
 &=& \int_Q\Delta(f(\phi_1)-\langle f(\phi_1)\rangle -f(\phi_2)+\langle f(\phi_2)\rangle)\overline{\psi^c_t} dx-k\int_Q \Delta \psi \overline{\psi^c_t} dx,\label{comx2d}
 \eea
 \bea
 &&\frac{d}{dt}\left(\beta(\overline{\psi^c_t}, \overline{\psi^c})+\frac12\|\overline{\psi^c}\|^2\right)-\beta\|\overline{\psi^c_t}\|^2+\|\nabla \Delta \psi^c\|^2-2\|\Delta \psi^c\|^2+k\|\nabla \psi^c\|^2\non\\
 &=&\int_Q(f(\phi_1)-\langle f(\phi_1)\rangle-f(\phi_2)+\langle f(\phi_2)\rangle)\Delta \psi^c dx-k\int_Q \psi \Delta \psi^c dx.\label{comx3d}
 \eea
 Similar to \eqref{comx4}, we have
 \be
\frac{d}{dt}\mathcal{Y}^c_2(t)+\mathcal{D}^c_2(t)\leq \mathcal{R}^c_2(t),\label{comx4d}
\ee
where
 $\eta_2>0$ and
  \bea
\mathcal{Y}^c_2(t)&=& \frac{\beta}{2}\|\overline{\psi^c_t}\|^2+\frac12
 \|\nabla \Delta \psi^c\|^2-\|\Delta \psi^c\|^2+\frac{k}{2}\|\nabla \psi^c\|^2+\eta_2\beta(\overline{\psi^c_t}, \overline{\psi^c})+\frac{\eta_2}{2}\|\overline{\psi^c}\|^2,\non\\
 \mathcal{D}^c_2(t)&=&(1-\eta_2\beta)\|\overline{\psi^c_t}\|^2+\eta_2\|\nabla \Delta \psi^c\|^2-2\eta_2\|\Delta \psi^c\|^2+\eta_2 k\|\nabla \psi^c\|^2,\non\\
 \mathcal{R}^c_2(t)&=&\int_Q\Delta(f(\phi_1)-\langle f(\phi_1)\rangle -f(\phi_2)+\langle f(\phi_2)\rangle)\overline{\psi^c_t} dx-k\int_Q \Delta \psi \overline{\psi^c_t} dx\non\\
 && \quad +\eta_2 \int_Q(f(\phi_1)-\langle f(\phi_1)\rangle-f(\phi_2)+\langle f(\phi_2)\rangle)\Delta \psi^c dx-\eta_2k\int_Q \psi \Delta \psi^c dx\non
\eea
The argument used to get \eqref{comx7} easily yields that, for sufficiently large $k$ and small $\eta_2$,
 \be
 \mathcal{D}^c_2(t)\geq \frac12\|\overline{\psi_t^c}\|^2+\frac{\eta_2k}{2} \|\nabla \psi^c\|^2+\kappa''\mathcal{Y}^c_2(t)\geq C(\|\overline{\psi^c_t}\|^2+
 \|\nabla \psi^c\|_{H^2}^2).\label{comx7d}
 \ee
 Finally,  using the uniform $\mathbb{X}_0$-estimates of $(\phi_i, \phi_{it})$ and $(\psi^c, \psi^c_t)$,   the remainder $\mathcal{R}^c_2$ can be estimated by
 \bea
 \mathcal{R}_2^c(t)
 &\leq& \frac12\|\overline{\psi_t^c}\|^2+\frac{\eta_2k}{2} \|\nabla \psi^c\|^2+ C(\|\phi_1\|_{2}, \|\phi_2\|_{2})\|\psi\|_2^2,\non
 \eea
 which implies
 \be
\frac{d}{dt}\mathcal{Y}^c_2(t)+\kappa''\mathcal{Y}^c_2(t)\leq C(\|\phi_1\|_{2}, \|\phi_2\|_{2})\|\psi\|_2^2.\label{comx5d}
\ee
Integrating \eqref{comx5d} with respect to time, we infer from the choice of initial data and the Lipschitz continuity estimate \eqref{LipX0} that
 \bea
 \mathcal{Y}^c_2(t)&\leq& \mathcal{Y}^c_2(0)+ \int_0^t C(\|\phi_1(s)\|_{2}, \|\phi_2(s)\|_{2})\|\psi(s)\|_2^2ds \non\\
 &\leq&  C(t)\|(\psi_0, \psi_1)\|_{\mathbb{X}_0}^2.\label{KK}
 \eea
 Due to \eqref{decayx0d}, for any fixed $\lambda\in (0,\frac12)$,
 we can choose $t^*$ sufficiently large such
 that
 \be
 \|(\psi^d(t^*), \psi_t^d(t^*))\|_{\mathbb{X}_0}\leq \lambda \|(\psi_0(x), \psi_1(x))\|_{\mathbb{X}_0}.\label{DD}
 \ee
 Fix such $t^*$ and set
 \be
 {\rm S}=S(t^*), \quad D(z_{01},z_{02})=(\psi^d(t^*), \psi_t^d(t^*)),\quad
 K(z_{01},z_{02})=(\psi^c(t^*),\psi^c_t(t^*)).\non
 \ee
 It follows from \eqref{KK} and \eqref{DD} that \eqref{asymsmoo} holds. The proof is complete.
\end{proof}

\begin{lemma}
\label{lmholder}
Denote $z=(\phi, \phi_t)$. For any $t^*> 0$, the map $(t,z)\mapsto S(t)z: [t^*,2t^*]\times\mathcal{V}_1\rightarrow \mathcal{V}_1 $
 is $\frac{1}{3}$-H\"{o}lder continuous in time and Lipschitz continuous in the initial
 data, when $\mathcal{V}_1$ is endowed with the $\mathbb{X}_0$-topology.
\end{lemma}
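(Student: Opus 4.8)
The plan is to prove the two asserted properties separately and then merge them with the triangle inequality. Throughout one uses that $\mathcal{V}_1$ is positively invariant under $S(t)$ — so the map is genuinely valued in $\mathcal{V}_1$, cf. Proposition \ref{abs2} — and bounded in $\mathbb{X}_1$; the latter implies that along any trajectory issuing from $\mathcal{V}_1$ the corresponding weak solution obeys, uniformly in $t\ge 0$, the bounds $\|\phi(t)\|_3+\|\phi_t(t)\|\le C$ and $\|\phi_{tt}(t)\|_{-3}\le C$ with $C$ depending only on $\mathcal{V}_1$, the estimate on $\phi_{tt}$ coming by comparison in \eqref{e1} (or directly from Definition \ref{soldef}(2)). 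The \emph{Lipschitz dependence on the data} is then immediate: for $z_{01},z_{02}\in\mathcal{V}_1$ and $t\in[t^*,2t^*]$, inequality \eqref{LipX0} together with the $\mathbb{X}_0$-boundedness of $\mathcal{V}_1$ gives $\|S(t)z_{01}-S(t)z_{02}\|_{\mathbb{X}_0}\le(L_1e^{2L_2t^*})^{1/2}\|z_{01}-z_{02}\|_{\mathbb{X}_0}$, a Lipschitz constant uniform over the time interval and over $\mathcal{V}_1$.

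The hard part will be the $\frac13$-H\"{o}lder continuity in time, where the spatial regularity gap between $\phi$ and $\phi_t$ becomes the governing obstacle. Fix $z_0\in\mathcal{V}_1$, write $(\phi,\phi_t)=S(\cdot)z_0$, and take $s,t\in[t^*,2t^*]$. From $\phi(t)-\phi(s)=\int_s^t\phi_t(\tau)\,d\tau$ one gets $\|\phi(t)-\phi(s)\|\le C|t-s|$, while trivially $\|\phi(t)-\phi(s)\|_3\le C$; likewise $\phi_t(t)-\phi_t(s)=\int_s^t\phi_{tt}(\tau)\,d\tau$ gives $\|\phi_t(t)-\phi_t(s)\|_{-3}\le C|t-s|$ and $\|\phi_t(t)-\phi_t(s)\|\le C$. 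Interpolating — $H^2_p(Q)$ sitting at exponent $2/3$ between $L^2_p(Q)$ and $H^3_p(Q)$, and $H^{-1}_p(Q)$ at exponent $1/3$ between $L^2_p(Q)$ and $H^{-3}_p(Q)$ — yields
\bea
\|\phi(t)-\phi(s)\|_2 &\le& C\|\phi(t)-\phi(s)\|^{1/3}\|\phi(t)-\phi(s)\|_3^{2/3}\le C|t-s|^{1/3},\non\\
\|\phi_t(t)-\phi_t(s)\|_{-1} &\le& C\|\phi_t(t)-\phi_t(s)\|^{2/3}\|\phi_t(t)-\phi_t(s)\|_{-3}^{1/3}\le C|t-s|^{1/3},\non
\eea
hence $\|S(t)z_0-S(s)z_0\|_{\mathbb{X}_0}\le C|t-s|^{1/3}$ with $C$ uniform over $\mathcal{V}_1$. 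The negative-order interpolation is legitimate once one splits $\phi=\overline\phi+\langle\phi\rangle$, $\phi_t=\overline{\phi_t}+\langle\phi_t\rangle$: on the zero-mean parts it is just the spectral interpolation for the fractional powers of $A_0$, and the scalar mean parts are handled by the explicit formulas \eqref{mde1}--\eqref{mde2}, which are smooth (a fortiori Lipschitz) in $t$.

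Finally, combining the two estimates through $\|S(t)z_{01}-S(s)z_{02}\|_{\mathbb{X}_0}\le\|S(t)z_{01}-S(t)z_{02}\|_{\mathbb{X}_0}+\|S(t)z_{02}-S(s)z_{02}\|_{\mathbb{X}_0}$ delivers the joint statement. The essential difficulty to be aware of — and the reason the exponent is $1/3$ rather than $1$ — is that $\phi_t$ only lives in $L^2_p(Q)$ and $\phi_{tt}$ only in $H^{-3}_p(Q)$, so Lipschitz continuity in time in the $\mathbb{X}_0$-norm is unattainable; one has to trade the uniform $\mathbb{X}_1$-regularity of the orbit against these weak temporal Lipschitz bounds via interpolation, and that trade-off is exactly what fixes the H\"{o}lder exponent at $1/3$.
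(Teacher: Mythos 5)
Your argument is correct and is essentially the paper's own proof: the same triangle-inequality split into a Lipschitz-in-data part controlled by \eqref{LipX0} and a time-increment part, with the latter handled by exactly the same interpolations ($H^2_p$ between $L^2_p$ and $H^3_p$, $H^{-1}_p$ between $L^2_p$ and $H^{-3}_p$ on the zero-mean components, with the means treated via \eqref{mde1}--\eqref{mde2}), using the uniform $\mathbb{X}_1$-bound on orbits from $\mathcal{V}_1$ and the comparison bound $\|\phi_{tt}\|_{-3}\leq C$. No substantive difference from the paper's proof.
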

\begin{proof}
For any $t,\tau\in [t^*, 2t^*]$ satisfying $t\geq \tau$ and $z_1, z_2\in \mathcal{V}_1$,
we have
\be
\|S(t)z_1-S(\tau)z_2\|_{\mathbb{X}_0}^2\leq 2 \|S(t) z_1-S(t)z_2\|_{\mathbb{X}_0}^2+ 2 \|S(t) z_2-S(\tau)z_2\|_{\mathbb{X}_0}^2,\label{holder}
\ee
where the first term on the right-hand side can be easily estimated like in \eqref{LipX0}, i.e.,
\be
\|S(t) z_1-S(t)z_2\|_{\mathbb{X}_0}^2\leq C e^{Ct} \|z_1-z_2\|_{\mathbb{X}_0}^2.\non
\ee
Let us set $z_2=(\phi_{20}, \phi_{21})$ and $S(t)z_2=(\phi_2(t), \phi_{2t}(t))$.
Recalling that the initial datum is in $\mathcal{V}_1$, we have the uniform estimate (cf. \eqref{disa1a})
\be
\|S(t) z_2\|_{\mathbb{X}_1}\leq C(\|z_2\|_{\mathbb{X}_1}),\non
\ee
which also implies $\|\phi_{2tt}(t)\|_{-3}\leq C$.
Concerning the second term on the right-hand side of \eqref{holder}, we infer that
\bea
&& \|S(t) z_2-S(\tau)z_2\|_{\mathbb{X}_0}^2\non\\
&=& \|\phi_2(t)-\phi_2(\tau)\|_2^2+\|\phi_{2t}(t)-\phi_{2t}(\tau)\|_{-1}^2\non\\
&\leq& C\|\phi_2(t)-\phi_2(\tau)\|_3^\frac{4}{3}\|\phi_2(t)-\phi_2(\tau)\|^\frac23+C \|\phi_2(t)-\phi_2(\tau)\|^2\non\\
&&\quad   +C\| \overline{\phi_2}_t(t)-\overline{\phi_2}_t(\tau)\|^\frac43 \|\overline{\phi_{2t}}(t)-\overline{\phi_{2t}}(\tau)\|_{-3}^\frac23+|\langle \phi_{2t}(t)\rangle-\langle \phi_{2t}(\tau)\rangle|^2\non\\
&\leq& C\left(\int_\tau^t \|\phi_{2t}(s)\| ds\right)^\frac23+C \left(\int_\tau^t \|\overline{\phi_{2tt}}(s)\|_{-3}ds\right)^\frac23
\non\\
&&\quad +|\langle \phi_{21}\rangle|^2\left(e^{-\frac{t}{\beta}}-e^{-\frac{\tau}{\beta}}\right)^2\non\\
&\leq& C(t^*) |t-\tau|^\frac23.\non
\eea
As a consequence, from the above estimates and \eqref{holder} we conclude that
\be
\|S(t)z_1-S(\tau)z_2\|_{\mathbb{X}_0}\leq C(t^*)\Big(\|z_1-z_2\|_{\mathbb{X}_0}+ |t-\tau|^\frac13\Big),
\ee
where $C(t^*)$ is a constant depending on $t^*$, $\|z_1\|_{\mathbb{X}_1}$ and  $\|z_2\|_{\mathbb{X}_1}$. This ends the proof.
\end{proof}
Based on the asymptotic smoothing property (Lemma \ref{lmassm}) and the H\"{o}lder continuity of the semigroup $S(t)$ on $\mathcal{V}_1$ (Lemma \ref{lmholder}), from the abstract result \cite[Proposition 1]{EMZ} we deduce the following
\bp
\label{lm5}  There exists
a bounded set $\mathcal{M}_0\subset\mathcal{V}_1$, closed and
of finite fractal dimension in $\mathbb{X}_0$, positively
invariant for the semigroup $S(t)$, such that, for some $\gamma_0>0$ and
$K_0\geq0$, there holds
 \be
 \mathrm{dist}_{\mathbb{X}_0}(S(t)\mathcal{V}_1,\mathcal{M}_0)\leq K_0 e^{-\gamma_0 t}.\label{exp1}
 \ee
\ep

\textit{Step 3. Enlarging the basin of attraction}.
In what follows, we aim to show
that \eqref{exp1} actually holds for any bounded subset $\mathcal{B}
\subset \mathbb{X}_0^{M,M'}$ instead of the more regular set $\mathcal{V}_1$,
but with different constants $K_\mathcal{B}$ and $\gamma_\mathcal{B}$. In other words, we
have to prove that the basin of exponential attraction coincides with $\mathbb{X}_0^{M,M'}$ (recall \eqref{exp}). For this purpose, we recall the transitivity of exponential
attraction (cf. \cite[Theorem 5.1]{FGMZ}), that is,
\bl\label{trans}
Let $\mathbb{X}$ be a metric space with distance function denoted by ${\rm dist}$. $S(t)$ is a semigroup acting on $\mathbb{X}$ such that
$$ {\rm dist}(S(t)z_1, S(t)z_2)\leq C_0e^{K_0t}{\rm dist}(z_1, z_2) \quad \text{for some}\ C, K>0.$$
 We further assume that there exist three subsets $B_1, B_2, B_3$  in $\mathbb{X}$ such that
$$ {\rm dist}_{\mathbb{X}}(S(t)B_1,B_2)\leq C_1e^{-\alpha_1t}, \quad {\rm dist}_{\mathbb{X}}(S(t)B_2,B_3)\leq C_2e^{-\alpha_2t}.$$
Then we have
$${\rm dist}_{\mathbb{X}}(S(t)B_1,B_3)\leq C'e^{-\alpha't},$$
where $C'=C_0C_1+C_2$ and $\alpha'=\frac{\alpha_1\alpha_2}{K_0+\alpha_1+\alpha_2}$.
\el

\textbf{Proof of Theorem \ref{exat}}.
Consider any bounded set $\mathcal{B}\subset \mathbb{X}_0^{M,M'}$ with radius given by
$R =\sup_{(\phi_0,\phi_1)\in \mathcal{B}}\|(\phi_0,\phi_1)\|_{\mathbb{X}_0}$.
For any $z_{01}=\left(\phi_{10},\phi_{11}\right)$, $z_{02}=\left(\phi_{20},\phi_{21}\right)
 \in\mathcal{B}$, by the Lipschitz continuity \eqref{LipX0}, we have
 \be
 \|S(t)z_{01}-S(t)z_{02}\|_{\mathbb{X}_0}
 \leq L_1^\frac12e^{\frac{1}{2}L_2 t}\|z_{01}-z_{02}\|_{\mathbb{X}_0}.\label{conss}
 \ee
 Besides, it follows from Proposition \ref{abs2} that
 \be
\mathrm{dist}_{\mathbb{X}_0}(S(t)\mathcal{B},\mathcal{V}_1)\leq M(R)e^{-\gamma t}.\label{lll}
 \ee
 Then we conclude from
\eqref{exp1}--\eqref{lll}, Proposition \ref{lm5} and  Lemma \ref{trans} that
\be
\mathrm{dist}_{\mathbb{X}_0}(S(t)\mathcal{B},\mathcal{M}_0)\leq K_\mathcal{B}e^{-\gamma_\mathcal{B} t},\non
 \ee
where
$$
 K_\mathcal{B}=L_1^\frac12M(R)+K_0,\quad\gamma_\mathcal{B}=\frac{\gamma
\gamma_0}{\frac12 L_2+\gamma+\gamma_0}.
$$
Therefore, the set $\mathcal{M}_0$ has $\mathbb{X}_0^{M,M'}$ as basin of attraction. The proof of Theorem \ref{exat} is finished.
$\square$

\section{Convergence to equilibria}
\setcounter{equation}{0}
\noindent
In this section, we investigate the longtime behavior of a single trajectory $(\phi, \phi_t)$. More precisely, we show that each (energy) solution does converge to a single equilibrium. The main result is as follows

\bt \label{convergence} For any initial
datum $(\phi_0, \phi_1) \in \mathbb{X}_0$, the unique (energy) solution $\phi$ to problem
\eqref{e1}--\eqref{e2}   fulfills
\be
\lim_{t\rightarrow +\infty}\|\phi(\cdot,
t)-\phi_\infty\|_{2}+\|\phi_t(t)\|_{-1}=0. \label{conv1}
 \ee
 Here, $\phi_\infty$ is
a stationary solution to problem \eqref{e1}--\eqref{e2}, i.e., a solution
to the following elliptic equation subject to periodic boundary conditions with an average constraint:
\be \left\{\begin{array}{l}  \Delta^2 \phi_\infty +2\Delta \phi_\infty + f(\phi_\infty)=Const., \;\;\;\; x \in \mathbb{T}^n,\\
\langle\phi_\infty\rangle=M=\beta\langle\phi_1\rangle+\langle\phi_0\rangle.
  \end{array}
 \label{sta}
 \right.
 \ee
 Moreover, the following convergence rate estimates hold
 \bea
 && |\langle\phi(t)\rangle- M|\leq \beta |\langle\phi_1\rangle| e^{-\frac{t}{\beta}},\quad  |\langle\phi_t(t)\rangle|\leq |\langle\phi_1\rangle| e^{-\frac{t}{\beta}}, \label{rate1}\\
 && \|\overline{\phi}(t)-\overline{\phi_\infty}\|_{2}+\|\overline{\phi_t}(t)\|_{-1}\leq  C(1+t)^{-\frac{\theta}{1-2\theta}}, \label{rate2}
 \eea
for all $t\geq 0$, where $C$ is a constant depending on $\|(\phi_0, \phi_1)\|_{\mathbb{X}_0}$ and on the coefficients of the system, while $\theta\in (0, \frac12)$ may depend on $\phi_\infty$.
 \et

 First, we show the decay property of $\phi_t$, the time derivative of the phase-field.

 \begin{proposition}\label{phit}
Let the assumptions of Theorem \ref{convergence} hold. Then we have
 \be
 \lim_{t\to+\infty} \|{\phi}_t(t)\|_{-1}=0.\label{decaypt}
 \ee
 \end{proposition}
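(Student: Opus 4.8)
The plan is to first extract square–integrability in time of $\|\overline{\phi_t}\|_{-1}$ from the energy identity, and then to upgrade this to pointwise decay by proving a uniform modulus of continuity for $t\mapsto\|\overline{\phi_t}(t)\|_{-1}^2$. The mean value is harmless: by \eqref{mde1} we have $\langle\phi_t(t)\rangle=\langle\phi_1\rangle e^{-t/\beta}$, and since $\|\phi_t(t)\|_{-1}^2=\|\overline{\phi_t}(t)\|_{-1}^2+|\langle\phi_t(t)\rangle|^2$ by the definition of the $\|\cdot\|_{-1}$–norm, it suffices to prove $\|\overline{\phi_t}(t)\|_{-1}\to 0$.

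\textit{Step 1 (time integrability).} Put $s=0$ in the energy identity \eqref{enereq}. By \eqref{disa1} the quantity $\|\phi(t)\|_2$ is bounded for all $t\ge 0$, so the embedding $H^2_p(Q)\hookrightarrow L^\infty(Q)$ ($n\le 3$) gives $\|f(\phi(t))\|_{L^1}\le C$ uniformly, while $|\mathcal{A}(t)|=|\langle\phi_1\rangle|e^{-t/\beta}$ is integrable on $(0,+\infty)$; moreover $\mathcal{E}(t)\ge E(\phi(t))\ge -c_1$ by \eqref{esE}. Hence $\int_0^t\|\overline{\phi_t}(\tau)\|_{-1}^2\,d\tau\le\mathcal{E}(0)+c_1+|\langle\phi_1\rangle|\beta\, C$ for every $t\ge 0$, and therefore $\|\overline{\phi_t}\|_{-1}\in L^2(0,+\infty)$.

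\textit{Step 2 (uniform continuity via the decomposition).} Since the energy solution does not regularize, $\|\overline{\phi_t}(t)\|_{-1}^2$ cannot be controlled directly because of the regularity gap between $\phi$ and $\phi_t$; I therefore transfer the question to the smoother component of the solution using the splitting $(\phi,\phi_t)=(\phi^d,\phi^d_t)+(\phi^c,\phi^c_t)$ of Section 5. By Lemma \ref{decay}, $\|\phi^d_t(t)\|_{-1}\le C e^{-\kappa t}$, whereas by Lemma \ref{com} the pair $(\phi^c,\phi^c_t)$ stays bounded in $\mathbb{X}_1=H^3_p(Q)\times L^2_p(Q)$ for all $t\ge 0$. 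Thus $\overline{\phi_t}=\phi^d_t+\overline{\phi^c_t}$, and it suffices to show $\|\overline{\phi^c_t}(t)\|_{-1}\to 0$. From Step 1 and the exponential decay of $\phi^d_t$ we get $\|\overline{\phi^c_t}\|_{-1}\in L^2(0,+\infty)$, and $\|\overline{\phi^c_t}(t)\|_{-1}\le c_Q\|\phi^c_t(t)\|\le C$. Next, comparison in \eqref{e1c}, using the uniform $\mathbb{X}_1$–bound on $\phi^c$, the $\mathbb{X}_0$–bound on $\phi$, and the boundedness of $f_k(\phi)-f_k(\phi-\phi^c)$ in $L^2_p(Q)$ (here $n\le 3$), shows that $A_0^{-1}\phi^c_{tt}$ is bounded in $H^{-1}_p(Q)$, i.e. $\phi^c_{tt}\in L^\infty(0,+\infty;H^{-3}_p(Q))$. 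Consequently $t\mapsto\overline{\phi^c_t}(t)$ is Lipschitz with values in $H^{-3}_p(Q)$, and interpolating $H^{-1}_p$ between $H^{-3}_p$ and $L^2_p$ yields, for $|t-s|\le 1$,
\[
\|\overline{\phi^c_t}(t)-\overline{\phi^c_t}(s)\|_{-1}\le C\,\|\overline{\phi^c_t}(t)-\overline{\phi^c_t}(s)\|_{-3}^{1/3}\,\|\overline{\phi^c_t}(t)-\overline{\phi^c_t}(s)\|^{2/3}\le C\,|t-s|^{1/3}.
\]
Combined with the uniform bound on $\|\overline{\phi^c_t}(t)\|_{-1}$, this makes $t\mapsto\|\overline{\phi^c_t}(t)\|_{-1}^2$ uniformly continuous on $[0,+\infty)$.

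\textit{Step 3 (conclusion).} A nonnegative, uniformly continuous function on $[0,+\infty)$ with finite integral must tend to $0$; applying this to $\|\overline{\phi^c_t}(\cdot)\|_{-1}^2\in L^1(0,+\infty)$ gives $\|\overline{\phi^c_t}(t)\|_{-1}\to 0$, hence $\|\overline{\phi_t}(t)\|_{-1}\le\|\phi^d_t(t)\|_{-1}+\|\overline{\phi^c_t}(t)\|_{-1}\to 0$, and finally $\|\phi_t(t)\|_{-1}\to 0$.

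The main obstacle is Step 2: converting $L^2$–in–time integrability of $\|\overline{\phi_t}\|_{-1}$ into pointwise decay. The regularity gap between $\phi\in H^2$ and $\phi_t\in H^{-1}$ blocks a direct estimate of $\tfrac{d}{dt}\|\overline{\phi_t}\|_{-1}^2$, and the way around it is to route the argument through the more regular component $\phi^c$, whose inertial term is better behaved; the exponentially decaying and mean–value parts are then easy to absorb.
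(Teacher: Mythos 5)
Your proposal is correct, and it follows the same overall strategy as the paper (square-integrability of $\|\overline{\phi_t}\|_{-1}$ from the energy identity, then a Barbalat-type upgrade to pointwise decay), but the second step is implemented differently. The paper stays with the full quantity $\overline{\phi_t}$: since $\phi_{tt}\in L^\infty(0,+\infty;H^{-4}_p(Q))$ by the very definition of an energy solution, $t\mapsto\overline{\phi_t}(t)$ is uniformly Lipschitz in $H^{-4}_p(Q)$, which together with $\overline{\phi_t}\in L^2(0,+\infty;H^{-1}_p(Q))$ forces $\|\overline{\phi_t}(t)\|_{-4}\to 0$; the convergence is then upgraded from $H^{-4}_p$ to $H^{-1}_p$ by invoking the precompactness of the trajectory in $\mathbb{X}_0$ established in Section 5. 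You instead route the argument through the decomposition $(\phi,\phi_t)=(\phi^d,\phi^d_t)+(\phi^c,\phi^c_t)$ explicitly: the decaying part is handled by Lemma \ref{decay}, and for the compact part you derive $\phi^c_{tt}\in L^\infty(0,+\infty;H^{-3}_p(Q))$ by comparison in \eqref{e1c} and interpolate between $H^{-3}_p$ and $L^2_p$ to get $\tfrac13$-H\"older continuity of $\overline{\phi^c_t}$ directly in $H^{-1}_p$, after which Barbalat's lemma applies. The two routes lean on the same Section 5 machinery (yours uses Lemmas \ref{decay} and \ref{com} directly; the paper's uses the precompactness they imply), so neither is more economical in substance; your version has the small advantage of making the compactness step quantitative (an explicit modulus of continuity in $H^{-1}_p$) and of not needing a subsequence extraction, at the cost of one extra comparison estimate for $\phi^c_{tt}$. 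All the individual claims you make (the splitting of the $\|\cdot\|_{-1}$ norm into mean and mean-free parts, the $L^2$-in-time bound, the $H^{-3}_p$ bound on $\phi^c_{tt}$, and the interpolation inequality) check out.
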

 \begin{proof}
 It follows from \eqref{mde1} that
 \be
 \lim_{t\to+\infty} \langle\phi_t(t)\rangle=0.\label{decaympt}
 \ee
 Recalling the energy equality \eqref{enereq}, using the uniform estimate \eqref{disa1} and the Sobolev embedding theorem, we have for $t\geq s\geq 0$
 \be
 \mathcal{E}(t)-\mathcal{E}(s)+\int_{s}^{t} \|\overline{\phi_t}(\tau)\|_{-1}^2d\tau \leq C \int_s^t e^{-\frac{\tau}{\beta}}d\tau.\label{endiff}
 \ee
 As we have seen before, $\mathcal{E}(t)$ is bounded from below by a constant (depending on $Q$). Thus \eqref{endiff} yields
 \be
 \int_{0}^{+\infty} \|\overline{\phi_t}(t)\|_{-1}^2dt <+\infty. \label{intphit}
 \ee
 On the other hand, since $\phi_{tt}\in L^\infty(0, +\infty; H^{-4}_p(Q))$, we have that
 \be
 \int_{t}^{t+1} \|\overline{\phi_{tt}}(s)\|_{-4} ds\leq C, \quad \ \forall \, t\geq 0.
 \ee
 As a result, the function $v(t):=\overline{\phi_t}$ is uniformly Lipschitz continuous in $H^{-4}_p(Q)$. This and \eqref{intphit} imply that $\|\overline{{\phi}_t}(t)\|_{-4} \to 0$ as $t\to +\infty$. Since the trajectory is precompact in $\mathbb{X}_0$ (recalling Section 5), we then have $\|\overline{{\phi}_t}(t)\|_{-1} \to 0$ as $t\to +\infty$. Together with \eqref{decaympt}, we arrive at \eqref{decaypt}. The proof is complete.
 \end{proof}

  Thanks to Proposition \ref{phit} we can define the $\omega$-limit set of $(\phi_0, \phi_1)$ as follows
  $$ \omega(\phi_0, \phi_1)=\{(\phi_\infty, 0): \phi_\infty \in H^2_p(Q), \ \exists \ t_n\nearrow +\infty, \ \|\phi(t_n)-\phi_\infty\|_{2}\to 0\}.$$
  Next, we give a characterization of the $\omega$-limit set. For any $M\in \mathbb{R}$, we set
  $\mathfrak{S}_M=\{\psi:  \psi \ \mbox{satisfies problem}\ \eqref{sta}\}. $
  It is standard to show that the energy functional
$E(\phi)$ admits at least one minimizer $\psi\in H^2_p(Q)$ with $\langle \psi\rangle=M$, which solves \eqref{sta}. As a consequence, the set $\mathfrak{S}_M$ is nonempty. Moreover, by standard elliptic estimate combined with a bootstrap argument, we see that the solution to problem \eqref{sta} is indeed smooth. Then we have

 \begin{proposition} Let the assumptions of Theorem \ref{convergence} hold. The $\omega$-limit set of $(\phi_0, \phi_1)$ is nonempty and is given by
 $$ \omega(\phi_0, \phi_1)=\{(\phi_\infty, 0): \phi_\infty\in \mathfrak{S}_M, \ \text{with}\ M=\beta\langle\phi_1\rangle+\langle\phi_0\rangle\}.$$
 Moreover, the energy functional $E(\phi)$ is constant on $\omega(\phi_0, \phi_1)$.
 \end{proposition}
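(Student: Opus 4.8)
The plan is to exploit the precompactness of the trajectory in $\mathbb{X}_0$ (established in Section~5 via the decomposition $\phi=\phi^d+\phi^c$), the decay of $\phi_t$ from Proposition~\ref{phit}, and the energy identity \eqref{enereq}. First I would fix a sequence $t_n\nearrow+\infty$. By Section~5 the set $\{(\phi(t),\phi_t(t)):t\geq0\}$ is precompact in $\mathbb{X}_0$, so along a subsequence (not relabeled) $\phi(t_n)\to\phi_\infty$ in $H^2_p(Q)$, while Proposition~\ref{phit} gives $\phi_t(t_n)\to0$ in $H^{-1}_p(Q)$; hence $\omega(\phi_0,\phi_1)$ is nonempty and each of its elements has the form $(\phi_\infty,0)$. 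Passing to the limit in \eqref{mde2}, $\langle\phi(t_n)\rangle=M-\beta\langle\phi_1\rangle e^{-t_n/\beta}\to M$, so $\langle\phi_\infty\rangle=M$.

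Next I would identify $\phi_\infty$ as a solution of the stationary problem \eqref{sta}. Introduce the translates $\phi_n(s):=\phi(t_n+s)$, $s\in[0,1]$. Lemma~\ref{es} bounds $\{\phi_n\}$ in $L^\infty(0,1;H^2_p(Q))$; moreover $\partial_s\phi_n=\phi_t(t_n+\cdot)\to0$ in $L^2(0,1;H^{-1}_p(Q))$, because of \eqref{intphit} together with $\langle\phi_t\rangle\to0$, and $\partial_{ss}\phi_n=\phi_{tt}(t_n+\cdot)$ is bounded in $L^\infty(0,1;H^{-4}_p(Q))$. By the compactness criterion of J.~Simon \cite{JS} (already used in Section~4), up to a further subsequence $\phi_n\to\psi$ strongly in $C([0,1];H^{2-\sigma}_p(Q))$ for a small $\sigma>0$; since $\partial_s\psi=0$ and $\psi(0)=\phi_\infty$, we get $\psi\equiv\phi_\infty$, and in particular $\phi(t_n+1)\to\phi_\infty$. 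Then, for any mean-zero $\varphi\in H^2_p(Q)$, I would pair the energy-solution equation \eqref{e1ae} with $\varphi$, use the identity $(A_0^{-1}(\beta\phi_{tt}+\phi_t),\varphi)=\tfrac{d}{dt}(A_0^{-1}(\beta\phi_t+\phi),\varphi)$, and integrate over $(t_n,t_n+1)$: the resulting endpoint term tends to $0$ (by $\phi_t(t_n),\phi_t(t_n+1)\to0$ in $H^{-1}_p$ and $\phi(t_n),\phi(t_n+1)\to\phi_\infty$), while in the integral term the strong convergence $\phi_n\to\phi_\infty$ in $C([0,1];H^{2-\sigma}_p)$ yields $f(\phi_n)\to f(\phi_\infty)$ (continuity of $f$ and $H^{2-\sigma}_p\hookrightarrow L^\infty_p$ for $n\leq3$) and the weak convergences $\Delta\phi_n\rightharpoonup\Delta\phi_\infty$, $\Delta^2\phi_n\rightharpoonup\Delta^2\phi_\infty$ handle the linear terms. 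This gives $(\Delta^2\phi_\infty+2\Delta\phi_\infty+f(\phi_\infty)-\langle f(\phi_\infty)\rangle,\varphi)=0$ for all such $\varphi$, i.e. $\Delta^2\phi_\infty+2\Delta\phi_\infty+f(\phi_\infty)=\langle f(\phi_\infty)\rangle=\mathrm{Const.}$; together with $\langle\phi_\infty\rangle=M$ this says $\phi_\infty\in\mathfrak{S}_M$. Since $\mathfrak{S}_M\neq\varnothing$ and its elements are smooth (by the elliptic bootstrap recalled before the statement), the $\omega$-limit set is nonempty and consists of pairs $(\phi_\infty,0)$ with $\phi_\infty\in\mathfrak{S}_M$.

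Finally, to prove that $E$ is constant on $\omega(\phi_0,\phi_1)$ I would use \eqref{enereq}. The forcing term there satisfies $\big|\int_s^t\mathcal{A}(\tau)\int_Qf(\phi)\,dx\,d\tau\big|\leq C\int_s^t e^{-\tau/\beta}\,d\tau\leq Ce^{-s/\beta}$, thanks to the uniform bound \eqref{disa1} and $H^2_p\hookrightarrow L^\infty_p$, while $\int_s^t\|\overline{\phi_t}\|_{-1}^2\,d\tau\to0$ as $s,t\to+\infty$ by \eqref{intphit}; hence $\mathcal{E}(t)$ is Cauchy at $+\infty$ and $\mathcal{E}_\infty:=\lim_{t\to+\infty}\mathcal{E}(t)$ exists. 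For any $(\phi_\infty,0)\in\omega(\phi_0,\phi_1)$ realized along $\phi(t_n)\to\phi_\infty$ in $H^2_p$, Proposition~\ref{phit} gives $\|\overline{\phi_t}(t_n)\|_{-1}\to0$, so $\mathcal{E}(t_n)=\tfrac{\beta}{2}\|\overline{\phi_t}(t_n)\|_{-1}^2+E(\phi(t_n))\to E(\phi_\infty)$ by continuity of $E$ on $H^2_p(Q)$; comparing with $\mathcal{E}_\infty$ forces $E(\phi_\infty)=\mathcal{E}_\infty$ for every element of $\omega(\phi_0,\phi_1)$, which is the asserted constancy.

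I expect the identification step (the second one) to be the main obstacle: in the absence of a Lyapunov functional and with the spatial regularity gap between $\phi$ and $\phi_t$, one cannot pass to the limit in a single-time formulation of the equation, but must work with time-translated solutions on unit windows, integrate the equation in time so that $\beta\phi_{tt}+\phi_t$ collapses into endpoint contributions controlled by Proposition~\ref{phit}, and invoke the J.~Simon compactness to upgrade the available weak-$*$ bounds to the strong convergence needed for the nonlinear term $f(\phi)$. The remaining arguments are routine.
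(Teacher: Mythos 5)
Your argument is correct, but the key step --- identifying each $\omega$-limit point as a solution of \eqref{sta} --- is carried out by a genuinely different device than the paper's. The paper introduces the auxiliary functional $\mathcal{G}(t)$ of \eqref{G} and the perturbed energy $\mathcal{W}(t)$ of \eqref{Ly3}, and derives the differential inequality \eqref{Ly4}, whose dissipation contains not only $\|\overline{\phi_t}\|_{-1}^2$ but also $\frac{1}{\beta}\|\Delta^2\overline\phi+2\Delta\overline\phi+f_M(\overline\phi)-\langle f_M(\overline\phi)\rangle\|_{-2}^2$; integrating over $[t_n,t_{n+1}]$ and using the convergence of $\mathcal{W}$ yields \eqref{ccc}, i.e.\ the stationary residual vanishes in a time-integrated $H^{-2}_p(Q)$ sense, and dominated convergence together with the uniform convergence of $\overline\phi(t_n+\cdot)$ identifies the limit. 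You instead integrate the weak form of \eqref{e1ae} against a fixed mean-zero test function over the unit windows $(t_n,t_n+1)$, so that $\beta\phi_{tt}+\phi_t$ collapses into endpoint contributions killed by Proposition \ref{phit}, and you upgrade the translate bounds via Simon's compactness to handle $f(\phi)$. Both routes are sound; yours is more elementary and avoids computing $\tfrac{d}{dt}\mathcal{G}$, but the paper's construction is not gratuitous: the inequality \eqref{Ly4} and the $H^{-2}_p(Q)$ control of the residual are exactly what feed the {\L}ojasiewicz--Simon argument (via $Z$ in \eqref{ZZ}) and the rate estimate in the proof of Theorem \ref{convergence}, so the paper obtains that infrastructure already at this stage. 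The remaining parts of your proposal (nonemptiness and the form $(\phi_\infty,0)$ from precompactness and Proposition \ref{phit}, the identity $\langle\phi_\infty\rangle=M$ from \eqref{mde2}, and the constancy of $E$ from \eqref{enereq} and \eqref{intphit}) coincide with the paper's. One cosmetic remark: in your endpoint identity $A_0^{-1}$ should act on the zero-mean parts $\beta\overline{\phi_t}+\overline{\phi}$; this is harmless because $\langle\beta\phi_t+\phi\rangle\equiv M$ is constant and your test functions have zero mean.
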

 \begin{proof}
 For any $M\in \mathbb{R}$, we introduce the auxiliary functions
 \be
 f_M(y)=f(y+M)\quad \mbox{and}\quad F_M(y)=F(y+M).
 \ee
 Setting $M=\beta\langle\phi_1\rangle+\langle\phi_0\rangle$, we note that, for any solution $\phi$ to problem \eqref{e1}--\eqref{e2}, the following relation holds
\be
f(\phi)=f_M\big(\overline{\phi}-\beta \langle\phi_1\rangle e^{-\frac{t}{\beta}}\big).
\ee
 Then we rewrite equation \eqref{e1} in the following form
 \bea
&& \beta \overline{\phi_{tt}}+\overline{\phi_t}+A_0 (\Delta ^2 \overline{\phi} +2\Delta \overline{\phi}+f_M(\overline{\phi})-\langle f_M(\overline{\phi})\rangle)\non\\
&=& A_0(f_M(\overline{\phi})-\langle f_M(\overline{\phi})\rangle-f(\phi)+\langle f(\phi)\rangle). \label{e1v}
\eea
Testing \eqref{e1v} by $A^{-1}\overline{\phi_t}$, we obtain
 \bea
 && \frac{d}{dt}\left(\frac{\beta}{2}\|\overline{\phi_t}\|_{-1}^2+\frac12\|\Delta \overline{\phi}\|^2-\|\nabla \overline{\phi}\|^2+\int_Q F_M(\overline{\phi})dx\right)+\|\overline{\phi_t}\|_{-1}^2\non\\
 &=&(f_M(\overline{\phi})-f(\phi), \overline{\phi_t}).\label{Ly1}
 \eea
 Using the uniform estimate \eqref{disa1}, the right-hand side of \eqref{Ly1} can be estimated as follows
 \bea
 &&(f_M(\overline{\phi})-f(\phi), \overline{\phi_t})\non\\
 &=& \left(f_M(\overline{\phi})-f_M\big(\overline{\phi}-\beta \langle\phi_1\rangle e^{-\frac{t}{\beta}}\big), \overline{\phi_t}\right)\non\\
 &=& \beta\langle\phi_1\rangle e^{-\frac{t}{\beta}} \left( \int_0^1 f'_M(s\overline{\phi}+(1-s)\big(\overline{\phi}-\beta \langle\phi_1\rangle e^{-\frac{t}{\beta}}\big)) ds, \overline{\phi_t}\right)\non\\
 &\leq& |\beta\langle\phi_1\rangle| e^{-\frac{t}{\beta}} \left\| \int_0^1 f'_M(s\overline{\phi}+(1-s)\big(\overline{\phi}-\beta \langle\phi_1\rangle e^{-\frac{t}{\beta}}\big)) ds\right\|_{1}\| \overline{\phi_t}\|_{-1}\non\\
 &\leq& \frac14  \|\overline{\phi_t}\|_{-1}^2+C  e^{-\frac{2t}{\beta}}.\non
 \eea
 As a result, we have
\be
 \frac{d}{dt}\left(\frac{\beta}{2}\|\overline{\phi_t}\|_{-1}^2+\frac12\|\Delta \overline{\phi}\|^2-\|\nabla \overline{\phi}\|^2+\int_Q F_M(\overline{\phi})dx\right)+\frac34 \|\overline{\phi_t}\|_{-1}^2\leq C  e^{-\frac{2t}{\beta}}.\label{Ly1a}
 \ee
Define
 \be
 \mathcal{G}(t)=\left(A_0^{-1}\overline{\phi_t}, A_0^{-1}\big(\Delta^2 \overline{\phi}+2\Delta \overline{\phi}+f_M(\overline{\phi})-\langle f_M(\overline{\phi})\rangle\big)\right)_{-1}. \label{G}
 \ee
 Thanks to Proposition \ref{phit} and uniform estimate \eqref{disa1}, we see that
 \be
 \lim_{t\to+\infty} \mathcal{G}(t)=0.
 \ee
 Differentiating $\mathcal{G}$ with respect to time and recalling \eqref{e1v}, we get
 \bea
 \frac{d}{dt}\mathcal{G}(t)
 &=& \left(A_0^{-1}\overline{\phi_{tt}}, A_0^{-1}\big(\Delta^2 \overline{\phi}+2\Delta \overline{\phi}+f_M(\overline{\phi})-\langle f_M(\overline{\phi})\rangle\big)\right)_{-1}\non\\
 &&+ \left(A_0^{-1}\overline{\phi_t}, A_0^{-1}\big(\Delta^2 \overline{\phi_t}+2\Delta \overline{\phi_t}+f_M'(\overline{\phi})\overline{\phi_t}-\langle f'_M(\overline{\phi}) \overline{\phi_t}\rangle\big)\right)_{-1}\non\\
 &=& -\frac{1}{\beta} \left(A_0^{-1}\overline{\phi_{t}}, A_0^{-1}\big(\Delta^2 \overline{\phi}+2\Delta \overline{\phi}+f_M(\overline{\phi})-\langle f_M(\overline{\phi})\rangle\big)\right)_{-1}\non\\
 && -\frac{1}{\beta}\|\Delta \overline{\phi}+2\Delta \overline{\phi}+f_M(\overline{\phi})-\langle f_M(\overline{\phi})\rangle\|_{-2}^2\non\\
 && +\frac{1}{\beta} \left(f_M(\overline{\phi})-f(\phi), A_0^{-1}\big(\Delta^2 \overline{\phi}+2\Delta \overline{\phi}+f_M(\overline{\phi})-\langle f_M(\overline{\phi})\rangle\big)\right)_{-1}\non\\
 &&+ \|\overline{\phi_t}\|_{-1}^2 -2\|\overline{\phi_t}\|_{-2}^2 + \left(A_0^{-1}\overline{\phi_t}, A_0^{-1}\big(f_M'(\overline{\phi})\overline{\phi_t}-\langle f'_M(\overline{\phi}) \overline{\phi_t}\rangle\big)\right)_{-1}.
 \eea
 Using the uniform estimate \eqref{disa1}, the  H\"{o}lder inequality and Young's inequality, we deduce that
 \be \frac{d}{dt}\mathcal{G}(t)+ \frac{1}{2\beta}\|\Delta \overline{\phi}+2\Delta \overline{\phi}+f_M(\overline{\phi})-\langle f_M(\overline{\phi})\rangle\|_{-2}^2\leq C_1 \|\overline{\phi_t}\|_{-1}^2+ Ce^{-\frac{2t}{\beta}}.\label{dG}
 \ee
 Let us introduce the function
 \be
 \mathcal{W}(t)=\beta\|\overline{\phi_t}\|_{-1}^2+\|\Delta \overline{\phi}\|^2-2\|\nabla \overline{\phi}\|^2+2\int_Q F_M(\overline{\phi})dx+\nu \mathcal{G}(t). \label{Ly3}
 \ee
 where $\nu>0$ is sufficiently small so that $C_1\nu\leq \frac12$.

 From the above estimates \eqref{Ly1a} and \eqref{dG}, it follows that
 \be
 \frac{d}{dt} \mathcal{W}(t)+ \|\overline{\phi_t}\|_{-1}^2+\frac{1}{\beta}\|\Delta \overline{\phi}+2\Delta \overline{\phi}+f_M(\overline{\phi})-\langle f_M(\overline{\phi})\rangle\|_{-2}^2\leq Ce^{-\frac{2t}{\beta}}, \label{Ly4}
 \ee
 where the term on the right-hand side $e^{-\frac{2t}{\beta}}$ is integrable on $[0,+\infty)$ and $$\lim_{t\to +\infty} \int_t^\infty e^{-\frac{2s}{\beta}} ds=0.$$
  Similarly to \eqref{endiff}, we have
 \be
 \mathcal{W}(t)-\mathcal{W}(s)\leq C\int_s^t e^{-\frac{2\tau}{\beta}}d\tau,\label{ineqW}
 \ee
 for $0\leq s\leq t<+\infty$. This yields that there exists $\mathcal{W}_\infty\in \mathbb{R}$ such that
 \be
 \lim_{t\to+\infty} \mathcal{W}(t)=\mathcal{W}_\infty.\label{conW}
 \ee

 Since the trajectory is precompact in $\mathbb{X}_0$, we can find an unbounded increasing sequence $\{t_n\}_{n=1}^\infty$ in $\mathbb{R}^+$ such that $\|\phi(t_n)-\phi_\infty\|_{2}\to 0$ as $n\to +\infty$, for some $\phi_\infty\in H^2_p(Q)$. Thus the set $\omega(\phi_0, \phi_1)$ is nonempty.

We now show that any possible limit point $\phi_\infty$ belongs to the set $\mathfrak{S}_M$. First, it easily follows from \eqref{mde2} that $\langle\phi_\infty\rangle=M$.
 Without loss of generality, we may assume $t_{n+1}\geq t_n+1$ for $n\in \mathbb{N}$. Integrating \eqref{Ly4} with respect to time on the interval $[t_n, t_{n+1}]$, we infer from \eqref{conW} that
 \bea
 && \int_0^1 \|\overline{\phi_t}(t_n+t)\|_{-1}^2dt\non\\
 && +\frac{1}{\beta}\int_0^1 \|\Delta^2 \overline{\phi}(t_n+t)+2\Delta \overline{\phi}(t_n+t)+f_M(\overline{\phi}(t_n+t))-\langle f_M(\overline{\phi}(t_n+t))\rangle\|_{-2}^2 dt\non\\
 &\leq& \int_{t_n}^{t_{n+1}} \|\overline{\phi_t}(s)\|_{-1}^2+\frac{1}{\beta}\|\Delta \overline{\phi}(s)+2\Delta \overline{\phi}(s)+f_M(\overline{\phi}(s))-\langle f_M(\overline{\phi}(s))\rangle\|_{-2}^2 ds\non\\
 &\leq&  -\mathcal{W}(t_{n+1})+\mathcal{W}(t_n)+C\int_{t_n}^{t_{n+1}}  e^{-\frac{2s}{\beta}}ds\non\\
 &\to& 0, \quad \mbox{as} \ n\to +\infty.\label{ccc}
 \eea
 Then we have $\|\overline{\phi}(t_n+t_1)-\overline{\phi}(t_n+t_2)\|_{-1}\to 0$ uniformly for $t_1, t_2\in [0,1]$ as $n\to +\infty$. Thus, from the definition of $t_n$, we see that for $t\in [0,1]$, $\|\overline{\phi}(t_n+t)-\phi_\infty\|_{2}\to 0$. By the Lebesgue dominated convergence theorem, we deduce from \eqref{ccc} that
 \be
 \|\Delta^2 \overline{\phi_\infty}+2\Delta \overline{\phi_\infty}+f_M(\overline{\phi_\infty})-\langle f_M(\overline{\phi_\infty})\rangle\|_{-2}=0.
 \ee
 Recalling the definition of $f_M$ and the fact $\langle\phi_\infty\rangle=M$, we easily see that $\phi_\infty$ solves the stationary problem \eqref{sta}.

  It follows from \eqref{endiff} that $\mathcal{E}(t)$ converges to a certain constant $E_\infty$ as $t\to+\infty$. Since we have shown the convergence of $\phi_t$ (cf. \eqref{decaypt}), we see that
 \be
 \lim_{t\to+\infty} E(\phi(t))=E_\infty.\label{limE}
 \ee
 As a consequence, $E(\phi)$ is constant on $\omega(\phi_0, \phi_1)$. The proof is complete.
\end{proof}

We are now able to prove the convergence of $\phi$ to a single equilibrium $\phi_\infty$. This is not a trivial issue since
the energy functional $E(\phi)$ is in general nonconvex (for instance, when $\epsilon>0$ is large). Therefore we  do not expect uniqueness of solutions for the stationary problem \eqref{sta}. More precisely, the $\omega$-limit set is a subset of $\mathfrak{S}_M$, whose structure might be complicated (e.g. it can be a continuum), we do not know whether the phase-field $\phi$ will converge or not as time goes to infinity, although the sequential convergence holds due to the precompactness of the trajectory.

To overcome this difficulty, which is typical of pattern formation models (cf. e.g., \cite{AW,AFI,FIP1,FIP2,FSc,GPS,GW,RH,WGZ07,WWu,ZWH} and references therein), we shall make use the well-known \L ojasiewicz--Simon approach (see, for instance, \cite{H06}).
 For any $M\in \mathbb{R}$, we consider the functional
 \be
 E_M(v)=\int_{Q} \left(\frac12|\Delta v|^2-|\nabla v|^2+F_M(v)\right) dx, \quad \forall\, v\in \dot{H}^2_p(Q).
 \ee
 \br\label{brE}
 It is obvious that $E_M(v)=E(v+M)$  for any $v\in \dot{H}^2_p(Q)$. Moreover, if $v$ is a critical point of $E_M$ in $\dot H^2_p(Q)$, then $\psi=v+M$ is a critical point of $E$ in the space $\{\phi\in H^2_p(Q): \langle\phi\rangle=M\}$ and vice versa.
 \er

 Then we establish a convenient \L ojasiewicz--Simon type inequality, namely,
 \bl\label{ls}
 Let $v^*$ be a critical point of $E_M(v)$ in $\dot H^2_p(Q)$. Then there exist constants
 $\theta\in(0,\frac12)$ and $\delta>0$ depending on $v^*$ such that,
 for any $v\in \dot{H}^2_p(Q)$ satisfying $\|v-v^*\|_{2}<\delta$,
 there holds
 \be
 \|\Delta^2 v+2\Delta v+f_M(v)-\langle f_M(v)\rangle\|_{-2} \geq
 |E_M(v)-E_M(v^*)|^{1-\theta}.\label{LSE}
 \ee
 \el
 \begin{proof}
Our hypotheses entail that $E_M(v)\in C^2(\dot{H}^2_p(Q); \mathbb{R})$. Observe that, for any $v,u\in \dot{H}^2_p(Q)$, we have
\bea
&& <E_M'(v), u>_{H^{-2}_p(Q), H^2_p(Q)}\non\\
&=&\int_Q \left[\Delta v\Delta u-2\nabla v\cdot \nabla u+ f_M(v) u\right] dx\non\\
&=&\int_Q \left[\Delta v\Delta u-2\nabla v\cdot \nabla u+( f_M(v)- \langle f_M(v)\rangle) u\right] dx.\non
\eea
 Then it is easy to check that any solution $v^*$ to the stationary problem \eqref{sta} is a critical point of the energy functional $E_M(v)$ in $\dot{H}^2_p(Q)$ such that $E_M'(v^*)=0$, and conversely, any
 critical point of $E_M(v)$ is a solution to \eqref{sta}. Let
 $$ E_M'(v)|_{\dot{H}^4_p(Q)}:=\mathcal{M}(v)= \Delta ^2 v-2\Delta v+f_M(v)-\langle f_M(v)\rangle: \dot{H}^4_p(Q)\to \dot{L}^2_p(Q).$$
 Thanks to the Sobolev embedding $H^2_p(Q)\hookrightarrow L^\infty_p( Q)$ $(n\leq3)$, we have that $\mathcal{M}(v)\in C^1(\dot{H}^4_p(Q); \dot{L}^2_p(Q))$ is analytic (cf. \cite{MR}). For any $u,v,w\in \dot{H}^4_p(Q)$, a direct calculation yields
 \be
 (\mathcal{M}'(w)v, u)=\int_Q \left[\Delta v\Delta u-2\nabla v\cdot \nabla u+(f'_M(w)v- \langle f'_M(w)v\rangle) u\right] dx.\non
 \ee
Observe now that, for any $w \in \dot{H}^4_p(Q)$, $\mathcal{L}(w)=\mathcal{M}'(w)$ is a bounded linear self-adjoint operator from $\dot{H}^4_p(Q)$ to $\dot{L}^2_p(Q)$. The leading order term of the linear operator $\mathcal{L}(w)$ is $\Delta^2:\dot H^4_p(Q) \to \dot L^2_p(Q)$ and its corresponding symmetric bilinear form is given by
 $$a(f, g)=\int_Q \Delta f\Delta g dx, \quad \forall\,f, g \in \dot{H}^2_p(Q).$$
   The remaining part of $\mathcal{L}(w)$ is a compact operator from $\dot H^4_p(Q)$ to $\dot L^2_p(Q)$. As a consequence, for any $w\in \dot H^4_p(Q)$, $\mathcal{L}(w)$ is indeed  a compact perturbation of a Fredholm operator of index zero from $H^4_p(Q)$ to $L^2_p(Q)$. We note that $\mathcal{L}=E''|_{\dot{H}^4_p(Q)}$. Then, for any $u,v,w\in \dot{H}^2_p(Q)$, we have
 \be
 <E''_M(w)v, u>_{H^{-2}_p(Q), H^2_p(Q)}=\int_Q \left[\Delta v\Delta u-2\nabla v\cdot \nabla u+(f'_M(w)v- \langle f'_M(w)v\rangle) u\right] dx.\non
 \ee
   For any critical point $v^*$, it follows that ${\rm Ker} E''(v^*) \subset H^4_p(Q)$
 and its range is closed in $\dot{L}^2_p(Q)$ and $(\dot{H}^2_p(Q))^*$, respectively, so that $\dot{L}^2_p(Q)={\rm Ker} E''(v^*)\oplus {\rm Ran} \mathcal{L}(v^*)$, $(\dot{H}^2_p(Q))^*={\rm Ker} E''(v^*)\oplus {\rm Ran} E''(v^*)$.
 Here, $(\dot{H}^2_p(Q))^*$ is the dual space of $\dot{H}^2_p(Q)$, which is the space of classes
 $$ [f] =\{ f +g;\, g \in H^{-2}_p(Q),\,< g, h>_{H^{-2}_p(Q), H^{2}_p(Q)}=0, \ \forall\, h \in \dot H^{2}_p(Q)\},$$
 endowed with the norm $\|[f]\|_{(\dot{H}^2_p(Q))^*}:=\|f-\langle f\rangle\|_{-2}$. Therefore we are in a position to apply the abstract result \cite[Corollary 3.11]{Chill} to conclude that there exist constants
 $\theta\in(0,\frac12)$ and $\delta>0$ depending on $v^*$ such that,
 for any $v\in \dot{H}^2_p(Q)$ satisfying $\|v-v^*\|_{2}<\delta$, there holds
  \be
 \|E'_M(v)\|_{(\dot{H}^2_p(Q)^*} \geq
 |E_M(v)-E_M(v^*)|^{1-\theta},\non
 \ee
 which yields \eqref{LSE}. The proof is complete.
 \end{proof}

 For every $(\phi_\infty, 0)\in \omega(\phi_0, \phi_1)$, we set $v^*=\phi_\infty-M$, then $\langle v^*\rangle=0$. By Lemma \ref{ls}, there exist some $\delta$ and $\theta\in (0, \frac12)$ that may depend on $v^*$ such that the inequality \eqref{LSE} holds for $v\in \mathbf{B}_{\delta}(v^*):=\{v\in \dot{H}^2_p(Q): \|v-v^*\|_{2}<\delta\}$ and $|E_M(v)-E_M(v^*)|\leq 1$. The union of balls $\{ \mathbf{B}_{\delta}(v^*): (v^*+M,0)\in \omega(\phi_0, \phi_1)\}$ forms an open covering of $\omega(\phi_0, \phi_1)$. Due to the compactness of $\omega(\phi_0, \phi_1)$ in $\mathbb{X}_0$, we can find a finite sub-covering $\{\mathbf{B}_{\delta_i}(v^*_i)\}_{i=1,2,...,m}$, where the constants $\delta_i, \delta_i$ corresponding to $v^*_i$ in Lemma \ref{ls} are indexed by $i$.

 From the definition of $\omega(\phi_0, \phi_1)$, we know that there exists a sufficiently large $t_0$ such that $\overline{\phi}(t)\in \mathcal{U}:=\bigcup_{i=1}^m\mathbf{B}_{\delta_i}(v^*_i)$ for $t\geq t_0$. Taking $ \theta=\min_{i=1}^m\{\theta_i\}\in (0, \frac12)$, we infer from \eqref{limE}, Remark \ref{brE} and Lemma \ref{ls} that, for all $t\geq t_0$,
 \be
 \|\Delta^2 \overline{\phi}+2\Delta \overline{\phi}+f_M(\overline{\phi})-\langle f_M(\overline{\phi})\rangle\|_{-2}\geq
 |E_M(\overline{\phi}(t))-E_\infty|^{1-\theta}.\label{LSEa}
 \ee
 Let us now set
  \be Z(t)=\left(\|\overline{\phi_t}\|_{-1}^2+\frac{1}{\beta}\|\Delta \overline{\phi}+2\Delta \overline{\phi}+f_M(\overline{\phi})-\langle f_M(\overline{\phi})\rangle\|_{-2}^2\right)^\frac12+ e^{-\frac{t}{\beta}}.\label{ZZ}
  \ee
  From \eqref{Ly4} and \eqref{conW} we infer that
  \be
 \int_t^{+\infty} Z(\tau)^2 d\tau\leq \mathcal{W}(t)-\mathcal{W}_\infty + C e^{-\frac{2t}{\beta}}.\label{LSEc}
 \ee
On the other hand, using the \L ojasiewicz--Simon inequality \eqref{LSEa}, the uniform estimates \eqref{disa1} and the fact $\frac{1}{1-\theta}<2$, we deduce that, for all $t\geq t_0$,
\bea
|\mathcal{W}(t)-\mathcal{W}_\infty|&\leq &
\beta \|\overline{\phi_t}\|_{-1}^2+ |E_M(\overline{\phi})-E_\infty| + \nu \mathcal{G}(t)\non\\
&\leq& \|\overline{\phi_t}\|_{-1}^2+ \|\Delta^2 \overline{\phi}+2\Delta \overline{\phi}+f_M(\overline{\phi})-\langle f_M(\overline{\phi})\rangle\|_{-2} ^{\frac{1}{1-\theta}}\non\\
&& +C\|\overline{\phi_t}\|_{-1}\|\Delta^2 \overline{\phi}+2\Delta \overline{\phi}+f_M(\overline{\phi})-\langle f_M(\overline{\phi})\rangle\|_{-2}\non\\
&\leq& C\|\overline{\phi_t}\|_{-1}^{\frac{1}{1-\theta}} + \|\Delta^2 \overline{\phi}+2\Delta \overline{\phi}+f_M(\overline{\phi})-\langle f_M(\overline{\phi})\rangle\|_{-2} ^{\frac{1}{1-\theta}}.\label{LSEb}
\eea
This gives
\be
\int_t^{+\infty}Z(\tau)^2d\tau \leq  CZ(t)^\frac{1}{1-\theta}, \quad \forall\, t\geq t_0.\label{Z}
\ee
Recall now the following result (cf. \cite[Lemma 7.1]{FS}, see also \cite[Lemma 4.1]{HT01})
\bl\label{AEle}
Let $\theta\in (0,\frac12)$. Assume that $Z\geq 0$ is a measurable function on $(0,+\infty)$ such that $Z\in L^2(\mathbb{R}^+)$ and suppose that there exist $C>0$ and $t_0\geq 0$ such that
 \be
\int_t^{+\infty}Z(\tau)^2 d\tau\leq C Z(t)^\frac{1}{1-\theta},\quad \text{for a.a.}\ \  t\geq t_0.\non
\ee
 Then $Z\in L^1(t_0, +\infty)$.
\el
As a consequence, we infer from \eqref{Z}, the definition of $Z$ (cf. \eqref{ZZ}) and Lemma \ref{AEle}  that
\be
\int_0^{+\infty} \|\overline{\phi_t}(t)\|_{-1} d t<+\infty. \label{intt}
\ee
This entails the convergence of $\overline{\phi}(t)$ in $H^{-1}_p(Q)$. Due to the precompactness of the trajectory in $\mathbb{X}_0$, we see that there exists a steady state $\phi_\infty\in \mathfrak{S}_M$ such that
\be
\lim_{t\to+\infty} \|\overline{\phi}(t)-\overline{\phi_\infty}\|_2=0.
\ee
Since we already know the convergences of the mean value of $\phi$ (cf. \eqref{mde2}) and $\phi_t$ (cf. Proposition \ref{phit}), we conclude that
$ \omega(\phi_0, \phi_1)=(\phi_\infty, 0)$ and \eqref{conv1} holds.

Finally, it remains to prove estimates \eqref{rate1} and \eqref{rate2} on the convergence rate. The former is a direct consequence of  \eqref{mde1} and \eqref{mde2}. Concerning the latter, observe that inequality \eqref{Ly4} implies that (cf. also \eqref{ineqW})
$$ \widetilde{\mathcal{W}}(t)=\mathcal{W}(t)+\frac{C\beta}{2}e^{-\frac{2t}{\beta}}-\mathcal{W}_\infty \geq 0 $$ fulfills
\be
 \frac{d}{dt} \widetilde{\mathcal{W}}(t)+ \|\overline{\phi_t}\|_{-1}^2+\frac{1}{\beta}\|\Delta \overline{\phi}+2\Delta \overline{\phi}+f_M(\overline{\phi})-\langle f_M(\overline{\phi})\rangle\|_{-2}^2\leq 0, \label{Ly4a}
 \ee
  Thus, $\widetilde{\mathcal{W}}(t)$ is decreasing in time and  recalling \eqref{conW} we have
  $\lim_{t\to+\infty} \widetilde{\mathcal{W}}(t)=0$.
 Moreover, for $t\geq t_0$, we infer from \eqref{LSEb} and \eqref{Ly4a} that
 \bea
 \widetilde{\mathcal{W}}(t)^{2(1-\theta)}&\leq& C( \|\overline{\phi_t}\|_{-1}^2+\frac{1}{\beta}\|\Delta \overline{\phi}+2\Delta \overline{\phi}+f_M(\overline{\phi})-\langle f_M(\overline{\phi})\rangle\|_{-2}^2)+ Ce^{-\frac{2t}{\beta}}\non\\
 &\leq& -C\frac{d}{dt} \widetilde{\mathcal{W}}(t)+ Ce^{-\frac{2t}{\beta}},
 \eea
which yields  the decay rate of $\widetilde{\mathcal{W}}(t)$, namely,
 \be
 0\leq \widetilde{\mathcal{W}}(t)\leq C(1+t)^{-\frac{1}{1-2\theta}}, \quad \forall\, t\geq 0.\non
 \ee
 Thus it follows from \eqref{Ly4a} that, for any $t\geq t_0$,
 \bea
  &&\int_t^{2t}  \|\overline{\phi_t}\|_{-1} d\tau\non\\
  &\leq& t^{\frac12}\left(\int_t^{2t} ( \|\overline{\phi_t}\|_{-1}^2+\frac{1}{\beta}\|\Delta \overline{\phi}+2\Delta \overline{\phi}+f_M(\overline{\phi})-\langle f_M(\overline{\phi})\rangle\|_{-2}^2) d\tau\right)^\frac12\non\\
   &\leq&  Ct^\frac12 \widetilde{\mathcal{W}}(t)^\frac12\non\\
   &\leq&  C(1+t)^{-\frac{\theta}{1-2\theta}}.\non
 \eea
 As a consequence, we have (recall \eqref{intt})
 \bea
 && \int_t^{+\infty} \|\overline{\phi_t}\|_{-1} d\tau= \sum_{j=0}^{+\infty} \int_{2^j t}^{2^{j+1}t} \|\overline{\phi_t}\|_{-1} d\tau\non\\
 &\leq&
  C\sum_{j=0}^{+\infty} (2^j t)^{-\frac{\theta}{1-2\theta}}\leq  C(1+t)^{-\frac{\theta}{1-2\theta}}, \quad \forall\, t\geq t_0,\non
 \eea
 which gives
 \be
 \|\overline{\phi}(t)-\overline{\phi_\infty}\|_{-1}\leq  C(1+t)^{-\frac{\theta}{1-2\theta}}, \quad \forall\, t\geq 0. \label{ratept}
 \ee
 In order to obtain the decay rate in $\mathbb{X}_0$ norm, we test \eqref{e1a} by $A_0^{-1} \overline{\phi_t}$, $A_0^{-1} (\overline{\phi}-\overline{\phi_\infty})$, respectively. We thus obtain
\bea
&&\frac{d}{dt}\left(\frac{\beta}{2}\|\overline{\phi_t}\|_{-1}^2+ \frac12\|\Delta (\phi-\phi_\infty)\|^2-\|\nabla (\phi-\phi_\infty)\|^2\right)+\|\overline{\phi_t}\|_{-1}^2\non\\
&=& -\int_Q(f(\phi)-f(\phi_\infty))\overline{\phi_t} dx\non\\
&\leq& \|f(\phi)-f(\phi_\infty)\|_1\|\overline{\phi_t}\|_{-1}\non\\
&\leq& \frac12\|\overline{\phi_t}\|_{-1}+C\|\phi-\phi_\infty\|_1^2, \label{dec1}
\eea
and
\bea
&& \frac{d}{dt}\left(\beta (\overline{\phi_t}, \overline\phi-\overline{\phi_\infty})_{-1}+\frac12\|\overline \phi-\overline{\phi_\infty}\|_{-1}^2\right)-\beta \|\overline{\phi_t}\|^2_{-1}+\|\Delta (\phi-\phi_\infty)\|^2\non\\
&=&-\int_{Q} (f(\phi)-f(\phi_\infty))( \overline \phi-\overline{\phi_\infty}) dx+2\|\nabla (\phi-\phi_\infty)\|^2\non\\
&\leq& C\|\phi-\phi_\infty\|^2_1.\label{dec2}
\eea
Multiplying \eqref{dec2} by a sufficiently small constant $\eta>0$, adding the resulting inequality to \eqref{dec1}, using interpolation and Young's inequality, we arrive at
\bea
\frac{d}{dt} \mathcal{Y}(t)+C\mathcal{Y}(t)&\leq& C\|\phi-\phi_\infty\|^2_{-1}\non\\
&\leq& C\|\overline{\phi}-\overline{\phi_\infty}\|^2_{-1}+C|\langle \phi\rangle-\langle\phi_\infty\rangle|^2\non\\
&\leq& C(1+t)^{-\frac{2\theta}{1-2\theta}},\label{dyt}
\eea
where
\bea
\mathcal{Y}(t)&=& \frac{\beta}{2}\|\overline{\phi_t}\|_{-1}^2+ \frac12\|\Delta (\phi-\phi_\infty)\|^2-\|\nabla (\phi-\phi_\infty)\|^2+\eta \beta (\overline{\phi_t}, \overline\phi-\overline{\phi_\infty})_{-1}\non\\
&& +\frac\eta 2\|\overline \phi-\overline{\phi_\infty}\|_{-1}^2.\non
\eea
Note that, for properly large $C_1>0$, we have
\be
\mathcal{Y}(t)+C_1\|\overline \phi-\overline{\phi_\infty}\|_{-1}^2\geq C_2 (\|\overline{\phi}-\overline{\phi_\infty}\|_2^2+ \|\overline{\phi_t}\|_{-1}^2).\label{Y}
\ee
On the other hand, \eqref{dyt} gives
\be
\mathcal{Y}(t)\leq  C(1+t)^{-\frac{2\theta}{1-2\theta}}.\non
\ee
Combining \eqref{ratept} with \eqref{Y}, we conclude that \eqref{rate1} holds. The proof of Theorem \ref{convergence} is now complete. $\square$

\medskip

{\bf Acknowledgments.} Maurizio Grasselli gratefully acknowledges the support of Fudan University Key Laboratory for Contemporary Mathematics through the Senior Visiting Scholarship. Hao Wu was
partially supported by National Science Foundation of China 11001058, SRFDP and ``Chen Guang" project supported by Shanghai Municipal Education Commission and Shanghai Education Development Foundation.

\end{document}